%
%
%

\documentclass{amsart}
\usepackage{amscd,graphicx,epsfig}
\usepackage[colorlinks=true, pdfstartview=FitV, linkcolor=blue, citecolor=blue, urlcolor=blue]{hyperref}
\usepackage{amssymb}
\usepackage[small,nohug,heads=vee]{diagrams}

\title[A Characterization of Affine $n$-Space]{Automorphism Groups of Affine Varieties and a Characterization of Affine $n$-Space}
\author{Hanspeter Kraft}
\date{January 31, 2015}

\address{Mathematisches Institut,
Universit\"at Basel, Rheinsprung 21, CH-4051 Basel, Switzerland}
\email{Hanspeter.Kraft@unibas.ch}

\thanks{The author was partially supported by Swiss National Science Foundation.}


\newtheorem{thm}{Theorem}[section]
\newtheorem*{thm*}{Theorem}

\newtheorem*{conj*}{Conjecture}

\newtheorem{prop}[thm]{Proposition}
\newtheorem*{prop*}{Proposition}
\newtheorem{lem}[thm]{Lemma}
\newtheorem{cor}[thm]{Corollary}
\newtheorem*{cor*}{Corollary}

\theoremstyle{definition}
\newtheorem{defn}[thm]{Definition}
\newtheorem{exa}[thm]{Example}

\theoremstyle{remark}
\newtheorem*{rem*}{Remark}
\newtheorem{rem}[thm]{Remark}

\newcommand{\name}[1]{\textsc{#1\/}}
\newcommand{\NN}{{\mathbb N}}
\newcommand{\ZZ}{{\mathbb Z}}
\newcommand{\CC}{{\Bbbk}}

\newcommand{\Cst}{{{\CC}^*}}
\newcommand{\Cplus}{{\CC^{+}}}

\newcommand{\Cn}{{\CC^{n}}}

\newcommand{\An}{{\mathbb A}^{n}}

\newcommand{\Aone}{{\mathbb A}^{1}}
\newcommand{\simto}{\xrightarrow{\sim}}
\newcommand{\be}{\begin{enumerate}}
\newcommand{\ee}{\end{enumerate}}

\newcommand{\dxi}{\frac{\partial}{\partial x_{i}}}

\newcommand{\dfx}[2]{\frac{\partial #1}{\partial #2}}

\newcommand{\A}{\mathfrak A}

\newcommand{\quot}{/\!\!/}

\newcommand{\V}{\mathfrak V}

\newcommand{\AAn}{\A_{n}}

\renewcommand{\aa}{\mathfrak a}

\newcommand{\into}{\hookrightarrow}

\newcommand{\Cxn}{\CC[x_{1},\ldots,x_{n}]}

\newcommand{\dd}[1]{\partial_{x_{#1}}}

\newcommand{\tS}{{\tilde S}}

\newcommand{\AAA}{\mathcal A}

\newcommand{\GGG}{\mathcal{G}}
\newcommand{\HHH}{\mathcal H}

\newcommand{\OOO}{\mathcal O}

\newcommand{\SSS}{\mathcal S}

\newcommand{\VVV}{\mathcal V}
\newcommand{\WWW}{\mathcal W}

\newcommand{\Sn}{\SSS_{n}}

\DeclareMathOperator{\End}{End}
\DeclareMathOperator{\id}{id}

\DeclareMathOperator{\Aut}{Aut}
\DeclareMathOperator{\SAut}{SAut}
\DeclareMathOperator{\Lie}{Lie}
\DeclareMathOperator{\VF}{Vec}
\DeclareMathOperator{\Div}{div}

\DeclareMathOperator{\GL}{GL}

\DeclareMathOperator{\SL}{SL}
\DeclareMathOperator{\SLtwo}{SL_{2}(\CC)}

\DeclareMathOperator{\Ad}{Ad}

\DeclareMathOperator{\Int}{Int}
\DeclareMathOperator{\M}{M}
\DeclareMathOperator{\Der}{Der}
\DeclareMathOperator{\Aff}{Aff}

\DeclareMathOperator{\SAff}{SAff}
\DeclareMathOperator{\Jac}{Jac}
\DeclareMathOperator{\pr}{pr}

\DeclareMathOperator{\jac}{jac}

\DeclareMathOperator{\Hom}{Hom}

\newcommand{\VFc}{\VF^{\text{c}}}

\newcommand{\AutL}{\Aut_{\text{\rm Lie}}}
\newcommand{\EndL}{\End_{\text{\rm Lie}}}

\renewcommand{\subset}{\subseteq}
\renewcommand{\supset}{\supseteq}

\newcommand{\g}{\mathbf g}
\newcommand{\f}{\mathbf f}

\renewcommand{\t}{\mathbf t}

\renewcommand{\phi}{\varphi}
\newcommand{\Autalg}{\Aut^{\text{\it alg}}}
\newcommand{\SAutalg}{\SAut^{\text{\it alg}}}
\DeclareMathOperator{\UU}{\mathcal U}

\newcommand{\lab}[1]{\label{#1}\marginpar{{\tiny\color{blue}#1}}}
\newcommand{\rref}[1]{\ref{#1}\marginpar{{\tiny\color{red}#1}}}
\newcommand{\reff}[1]{\ref{#1}\marginpar{{\tiny\color{red}#1}}}
\newcommand{\margin}[1]{\marginpar{{\tiny\color{red}#1}}}

\renewcommand{\lab}[1]{\label{#1}}
\renewcommand{\rref}[1]{\ref{#1}}
\renewcommand{\reff}[1]{\ref{#1}}
\renewcommand{\margin}[1]{}

\frenchspacing
\begin{document}
\begin{abstract} We show that the automorphism group of affine $n$-space $\An$ determines $\An$ up to isomorphism: If $X$ is a connected affine variety such that $\Aut(X) \simeq \Aut(\An)$ as ind-groups, then $X \simeq \An$ as varieties.

We also show that every finite group and every torus appears as $\Aut(X)$ for a suitable affine variety $X$, but that $\Aut(X)$ cannot be isomorphic to a semisimple group. In fact, if $\Aut(X)$ is finite dimensional and if $X \not\simeq \Aone$, then the connected component $\Aut(X)^{\circ}$ is a torus.

Concerning the structure of $\Aut(\An)$ we
prove that any homomorphism $\Aut(\An) \to \GGG$ of ind-groups either factors through $\jac\colon\Aut(\An) \to \Cst$, or it is a closed immersion. For $\SAut(\An):=\ker(\jac)\subset \Aut(\An)$ we show that every nontrivial homomorphism $\SAut(\An) \to \GGG$ is a closed immersion.

Finally, we prove that every non-trivial homomorphism $\phi\colon\SAut(\An) \to\SAut(\An)$ is an automorphism, and that $\phi$ is given by conjugation with an element from $\Aut(\An)$.
\end{abstract}

\maketitle

\section{Introduction and main results}
Our base field $\CC$ is algebraically closed of characteristic zero. For an affine variety $X$ the automorphism group $\Aut(X)$ has the structure of an {\it ind-group}. We will shortly recall the basic definitions in the following section~\reff{notation.sec}. The classical example is $\Aut(\An)$, the group of automorphism of affine $n$-space $\An = \Cn$. 

The first main result  shows that $\An$ is determined by its automorphism group.

\begin{thm}\lab{thm1}
Let $X$ be a connected affine variety. If  $\Aut(X) \simeq \Aut(\An)$ as ind-groups, then $X \simeq \An$ as varieties.
\end{thm}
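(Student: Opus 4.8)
The plan is to reconstruct the geometry of $X$ from the abstract ind-group $G := \Aut(X)\simeq\Aut(\An)$, by locating intrinsically defined subgroups of $G$ and analysing how they act on $X$.

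First I would use the torus structure to bound $\dim X$ from below. The diagonal torus $T_{n}\subset\GL_{n}\subset\Aut(\An)$ has dimension $n$, and no torus in $\Aut(\An)$ can be larger, since a torus acting faithfully on a variety of dimension $n$ has rank at most $n$ (an effective torus action has finite generic stabilizer). Hence under the isomorphism $G$ contains a torus $T$ with $\dim T=n$ acting faithfully on $X$ (it is faithful because $\Aut(X)$ acts faithfully on $X$ by definition), so $\dim X\ge n$.

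Next I would transport the unipotent structure. For an affine variety $Y$, a connected one-dimensional unipotent closed subgroup of $\Aut(Y)$ is exactly the flow $t\mapsto\exp(t\,\partial)$ of a nonzero locally nilpotent vector field $\partial$ on $Y$; this is an intrinsic description, so every ind-group isomorphism carries $\Cplus$-subgroups to $\Cplus$-subgroups. Inside $\Aut(\An)$ there is the distinguished family of ``root subgroups'' normalized by $T_{n}$ — those generated by the elementary automorphisms $x_{i}\mapsto x_{i}+c\,x^{\mathbf a}$ with $a_{i}=0$ — whose $T_{n}$-weights range over an explicit infinite subset $R\subset X^{*}(T_{n})=\ZZ^{n}$ that generates $\ZZ^{n}$, and whose commutators among themselves and with $\GL_{n}$ realize the standard pattern of $\Kxn$. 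Carrying $\GL_{n}$, $T_{n}$ and these root subgroups through the isomorphism equips $X$ with a faithful rank-$n$ torus action together with semi-invariant locally nilpotent vector fields whose weights and mutual relations match those on $\An$.

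Finally I would read off $X$ from this data. The transported root subgroups should force the coordinate ring $\OOO(X)$, graded by the rank-$n$ torus action, to be $\ZZ^{n}$-graded-isomorphic to $\Kxn$ with its standard grading; in particular $\dim X=n$ and $X\simeq\An$. This last step is where a characterization of affine $n$-space (a smooth affine toric variety whose fan is the positive orthant) is used. I expect the main obstacle to be exactly this passage from an abstract ind-group isomorphism to genuine geometry: one must verify that such an isomorphism sends a maximal torus of $\Aut(\An)$ to a maximal torus, and then that it preserves enough of the relations inside a generating system of root subgroups to pin down the fan — equivalently, that no affine variety other than $\An$ carries a torus action with the same root-subgroup pattern. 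Proving this rigidity, for which one calls on the fine structure of $\Aut(\An)$ (its large affine and de Jonquières subgroups and the infinite transitivity of $\SAut(\An)$ on $\An$), is the technical heart of the argument.
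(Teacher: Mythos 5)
Your first two steps (transporting the torus and the $\Cplus$/root-subgroup structure through the isomorphism, and getting $\dim X\geq n$ from faithfulness of the rank-$n$ torus) are sound and agree with the paper's setup. But the argument breaks off exactly where the theorem has to be proved: your final paragraph asserts that the transported root-subgroup pattern ``should force'' $\OOO(X)$ to be $\ZZ^{n}$-graded isomorphic to $\Kxn$, and you yourself flag this rigidity as unproven. In particular you never obtain the upper bound $\dim X\leq n$; with only $\dim X\geq n$ and a faithful rank-$n$ torus action there is no reason for $X$ to be toric (no dense orbit), so the appeal to the classification of smooth affine toric varieties has nothing to stand on. That missing inequality is the actual crux, and the paper gets it by a mechanism absent from your proposal: Liendo's theorem that the root subgroups of $\Aut(\An)$ with respect to $T_{n}'$ have pairwise \emph{distinct} weights, combined with the modification construction $f\cdot U$ for $T'$-semi-invariant $U$-invariants $f\in\OOO(X)^{U}$. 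Since $f\mapsto f\cdot U$ is injective and produces root subgroups of distinct weights, $\OOO(X)^{U}$ must be multiplicity-free as a $T'$-module, and Lemma~\ref{rootdim.lem} (a quotient argument: $T'$ has a dense orbit in $X_{f}\quot U$) then yields $\dim X\leq \dim T'+1=n$.

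The concluding identification is also different and cannot be skipped: instead of reconstructing the grading of $\OOO(X)$, the paper transports the subgroup $\SAff_{n}\subset\UU(\An)$ and invokes Proposition~\ref{Aff.prop} (small $\SAff_{n}$-varieties): a faithful $\SAff_{n}$-action on an affine variety of dimension $\leq n$ forces $X\simeq\An$. This replaces the ``fan rigidity'' you hoped for by an already available classification, and it is where the large affine subgroup actually enters. Two further points you would still need to handle: the paper reduces Theorem~\ref{thm1} to the statement about $\UU(X)$, the subgroup generated by unipotent elements (so only an algebraic isomorphism of these subgroups is used), and it treats separately the case where the connected variety $X$ is reducible, by restricting to an irreducible component on which $\phi(\SAff_{n})$ still acts faithfully and checking that multiplicity-freeness survives the restriction. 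Your proposal implicitly assumes irreducibility and does not address this case.
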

It is clear that $X$ has to be connected, since the automorphism group does not change if we form the disjoint union of $\An$ with a variety $Y$ with trivial automorphism group.

Another important question is which groups appear as automorphism groups of affine varieties. For finite groups we have the following result.

\begin{thm}\lab{thm4}
For every finite group $G$ there is a smooth affine curve $C$ such that $\Aut(C)\simeq G$.
\end{thm}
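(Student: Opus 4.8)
The plan is to realize $C$ as the complement of a finite set of points in a smooth projective curve of high genus on which $G$ acts faithfully; removing enough points will cut the automorphism group down to exactly $G$.

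First I produce a smooth projective curve $\bar C$ of genus $\ge 2$ together with a faithful action of $G$ by automorphisms. This is classical: choose a compact Riemann surface $Y$ of genus $\ge 2$ admitting a surjection $\pi_1(Y)\onto G$ (send the standard generators to a generating set of $G$), let $\bar C\to Y$ be the associated unramified Galois cover with deck group $G$, and equip $\bar C$ with the complex structure pulled back from $Y$; then $G$ acts freely and holomorphically on the compact Riemann surface $\bar C$, i.e. by automorphisms of a smooth projective curve, and by Riemann--Hurwitz $g(\bar C)=|G|(g(Y)-1)+1\ge 2$. (Alternatively one can take a general $G$-invariant complete-intersection curve of large degree inside the projectivization of the regular representation of $G$.) By Hurwitz's bound the group $H:=\Aut(\bar C)$ is finite, and $G\subseteq H$.

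Next, since $H$ is finite and every nonidentity element of $H$ has finite fixed locus on the curve $\bar C$, only finitely many points of $\bar C$ have nontrivial $H$-stabilizer; choose $p\in\bar C$ with $H_p=\{e\}$, set $S:=G\cdot p$ (a set of $|G|$ points), and let $C:=\bar C\setminus S$, a smooth affine curve. I claim $\Aut(C)\simeq G$. Indeed $\bar C$ is the unique smooth projective model of the function field $\CC(C)$, so every $\sigma\in\Aut(C)$ extends to an automorphism $\bar\sigma$ of $\bar C$; since $C$ is dense in $\bar C$ and $\bar C$ is separated, $\bar\sigma$ agrees with $\sigma$ on $C$, hence $\bar\sigma(C)=C$ and $\bar\sigma(S)=S$. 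Conversely any $\tau\in H$ with $\tau(S)=S$ restricts to an automorphism of $C$. Thus $\sigma\mapsto\bar\sigma$ identifies $\Aut(C)$ with $\Stab_H(S)$. Finally, because $H_p=\{e\}$ the orbit map $H\to H\cdot p$ is bijective and identifies $S=G\cdot p$ with the subset $G\subseteq H$ and $\tau\cdot S$ with the coset $\tau G$; hence $\tau(S)=S$ iff $\tau G=G$ iff $\tau\in G$, so $\Stab_H(S)=G$. This proves $\Aut(C)\simeq G$ as abstract groups, and as this group is finite the ind-group $\Aut(C)$ is reduced and $0$-dimensional, hence isomorphic to $G$ as an ind-group.

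The only genuine subtlety is that the inclusion $G\subseteq H=\Aut(\bar C)$ produced in the first step is in general strict, so one must shrink $H$ down to $G$; deleting a $G$-orbit that lies in the free locus of the \emph{full} group $H$ is exactly what accomplishes this, and checking that it creates no new automorphisms is the point of the computation of $\Stab_H(S)$ above. Realizing $G$ inside the automorphism group of some projective curve of genus $\ge 2$ (the first step) is standard and, if one prefers, could instead be replaced by a direct appeal to the theorem that every finite group is the \emph{full} automorphism group of a compact Riemann surface, after which one simply removes a $G$-orbit.
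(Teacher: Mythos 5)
Your argument is correct, and it reaches the theorem by a genuinely different construction from the paper's. The paper never leaves algebraic geometry over the base field: it embeds $G$ into $\Sn$ (with $n\geq 5$), takes a general fiber of the map $\phi=(s_{1},\ldots,s_{n-1})\colon\An\to\AA^{n-1}$ given by the elementary symmetric functions, proves as its main technical point (Proposition~\ref{genfiberirred.prop}) that this fiber is an irreducible curve with a faithful $\Sn$-action, passes to the normalization, and then cuts the automorphism group down to $G$ by removing the $G$-orbit of a point whose stabilizer in $\Aut(\bar C)$ is trivial (Lemma~\ref{lem3}). So the cut-down step is common to both proofs -- and your verification that $\Aut(\bar C\setminus G\cdot p)=\Stab_{H}(G\cdot p)=G$ is actually spelled out more carefully than in Lemma~\ref{lem3}, since deleting the orbit from the projective model means the boundary is exactly the deleted orbit and no pre-existing punctures can be permuted with it. The real difference is how one produces a curve with a faithful $G$-action: your unramified Galois cover of a genus~$\geq 2$ surface, built from a surjection $\pi_{1}(Y)\onto G$, replaces the paper's symmetric-function fibration together with its irreducibility proof, and it realizes $G$ itself rather than an ambient $\Sn$; this makes your proof shorter and more transparent.

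The one caveat is the base field: the theorem is stated over an arbitrary algebraically closed field $\CC$ of characteristic zero, whereas your first step (topological fundamental group, unramified cover, holomorphic structure) is transcendental and literally valid only over the complex numbers. This is repairable by a standard remark -- e.g.\ spread out the pair $(\bar C, G\text{-action})$ so that it is defined over $\overline{\QQ}$ and base-change to $\CC$, noting that finiteness of $\Aut(\bar C)$ in genus $\geq 2$ and the computation of $\Stab_{H}(G\cdot p)$ go through over any such field, or else carry out your parenthetical purely algebraic alternative with a general $G$-invariant complete-intersection curve in the projectivization of a faithful representation -- but as written this step should be made explicit rather than left implicit.
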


Moreover, there exist surfaces with infinite discrete automorphism groups (see \cite[Proposition~7.5.2]{FuKr2014On-the-geometry-of}). As for algebraic groups, we will see that every torus appears as $\Aut(X)$ (Example~\reff{torus.exa}), but there are no examples where $\Aut(X) \simeq \SLtwo$. In fact, this is not possible as the next result shows.

\begin{thm}\lab{thm2}
Let $X$ be a connected affine variety. If $\dim\Aut(X)<\infty$, then either $X\simeq \Aone$ or $\Aut(X)^{0}$ is a torus.
\end{thm}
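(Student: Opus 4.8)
The plan is to exploit the structure of $\Aut(X)^0$ as a connected algebraic group, using the hypothesis $\dim\Aut(X)<\infty$ to reduce the problem to understanding which connected algebraic groups can act faithfully (and with dense orbit, or at least in a geometrically constrained way) on an affine variety whose automorphism group they exhaust. First I would recall that a connected algebraic group $G$ has a Levi-type decomposition, or at least contains a maximal reductive subgroup; the key case to rule out is that $\Aut(X)^0$ contains a copy of $\SLtwo$ (equivalently, a nontrivial semisimple part), since a nontrivial unipotent subgroup would give a nontrivial $\Cplus$-action, hence a nonzero locally nilpotent derivation on $\CC[X]$, and I'd want to argue that on most $X$ such a one-parameter unipotent subgroup does not ``close up'' to a full algebraic group — but more to the point, the presence of any $\Cplus\subset\Aut(X)$ already forces, via root-group considerations inside $\SLtwo$, extra automorphisms that contradict $\dim\Aut(X)<\infty$ unless $X$ is very special. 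So the heart is: if $\SLtwo$ acts faithfully on an affine variety $X$ and $\Aut(X)^0$ is finite dimensional, then $X\simeq\Aone$ (on which, however, $\SLtwo$ does \emph{not} act, so this case is actually vacuous and must be excluded) — i.e., I must show $\SLtwo\not\subset\Aut(X)^0$ for $X\not\simeq\Aone$, and likewise rule out $\Cplus$.

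Concretely I would proceed as follows. Step 1: Reduce to showing that $\Aut(X)^0$ contains no $\Cplus$. Indeed, if $\Aut(X)^0$ is a connected affine algebraic group with no $\Cplus$-subgroup, then it has no unipotent elements, so it is a torus (a connected algebraic group with trivial unipotent radical and no semisimple part — the latter because $\SLtwo$ itself contains $\Cplus$'s — is a torus); this handles the conclusion once Step 2 is done. Step 2: Suppose $\Cplus\subset\Aut(X)$, equivalently there is a nonzero locally nilpotent derivation $\delta$ on $R:=\CC[X]$. The idea is to use $\delta$ to manufacture infinitely many independent automorphisms, forcing $\dim\Aut(X)=\infty$ unless $X\simeq\Aone$. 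Precisely: the kernel $A:=\ker\delta$ is a subalgebra, and for any $a\in A$ the derivation $a\delta$ is again locally nilpotent, giving a homomorphism $(A,+)\to\Aut(X)$, $a\mapsto\exp(a\delta)$. If $A$ is infinite dimensional as a $\CC$-vector space — which happens unless $\dim X=1$ — the images $\exp(a\delta)$ span an infinite-dimensional subgroup, contradicting $\dim\Aut(X)<\infty$. So $\dim X\le 1$; an affine curve with a nontrivial $\Cplus$-action must be $\Aone$ (the $\Cplus$-action on a smooth affine curve of positive genus or with punctures is trivial, since such curves have no nonconstant morphism from $\Aone=\Cplus$). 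Hence $X\simeq\Aone$.

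The main obstacle I anticipate is Step 2's claim that $\dim_{\CC}\ker\delta=\infty$ whenever $\dim X\ge 2$, together with the claim that the resulting additive subgroup $\{\exp(a\delta):a\in\ker\delta\}$ is genuinely infinite dimensional \emph{inside the ind-group} $\Aut(X)$ — one must check that these automorphisms do not all collapse into a finite-dimensional algebraic subgroup, which requires that the map $a\mapsto\exp(a\delta)$ be (locally) a closed immersion of ind-varieties, or at least that its image have unbounded ``degree.'' This is plausible because $\exp(a\delta)(f)=f+a\delta(f)+\tfrac{a^2}{2}\delta^2(f)+\cdots$ visibly depends on $a$ through polynomials of growing degree as $a$ ranges over $\ker\delta$, but making this rigorous within the ind-group formalism of Section~\ref{notation.sec} is the delicate point. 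A secondary subtlety is ensuring $\ker\delta$ is infinite dimensional: this follows because $R$ is finitely generated of dimension $\ge 2$ while $R$ is a free (or at least faithfully flat) module of infinite rank over a suitable localization of $\ker\delta$ — invoking the standard structure theory of locally nilpotent derivations (the plinth ideal, local slices, and the fact that $\operatorname{tr.deg}_{\CC}\operatorname{Frac}(\ker\delta)=\dim X-1$).
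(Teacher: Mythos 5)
Your strategy is essentially the paper's: reduce to showing that a nontrivial $\Cplus$-action on a connected affine variety $X\not\simeq\Aone$ forces $\dim\Aut(X)=\infty$, by multiplying the corresponding locally nilpotent derivation $\delta$ by invariants (the paper's ``modifications'' $f\cdot U$). For irreducible $X$ your Step 2 is the right argument. But there are two genuine gaps. First, the theorem only assumes $X$ connected, and for reducible $X$ your sufficient condition ``$\dim_{\CC}\ker\delta=\infty$'' does not give what you need: writing $U\subset\Aut(X)$ for the image of $\Cplus$, any invariant $a$ that vanishes on every irreducible component on which the action is nontrivial satisfies $a\delta=0$, so it produces no automorphism at all; hence the dimension of $\ker\delta$ by itself proves nothing, and your appeal to the structure theory of locally nilpotent derivations (transcendence degree of the kernel equals $\dim X-1$) is a statement about domains and does not apply. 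What is actually needed is that the \emph{restriction} of $\OOO(X)^{U}$ to some component $X_{j}$ of dimension $\geq 2$ with nontrivial action is infinite dimensional; the paper proves this by choosing a nonzero invariant $f$ vanishing on all intersections $X_{j}\cap X_{k}$, $k\neq j$ (possible because that ideal is $U$-stable), so that $X_{f}\subset X_{j}$ and $\OOO(X)^{U}_{f}=\OOO(X_{j})^{U}_{f}$. Relatedly, the case where the action is nontrivial on a one-dimensional component needs the orbit argument (the orbit is open and closed, so that component is a single $\Cplus$-orbit without fixed points, hence equals $X$ by connectedness and is isomorphic to $\Aone$); your remark about smooth curves of positive genus does not cover singular or reducible curves, although this part is easy to repair.

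Second, the point you yourself flag as delicate --- that the automorphisms $\exp(a\delta)$ do not all collapse into a finite-dimensional algebraic subgroup --- is left unproved, and it is exactly where the ind-group formalism enters. The paper's resolution is short and Lie-algebra-theoretic, and you should use it rather than try to show that $a\mapsto\exp(a\delta)$ is an immersion: each modification gives a closed subgroup $f\cdot U\subset\Aut(X)$ with $\Lie(f\cdot U)=f\,\Lie U=\CC f\delta$ inside $\Lie\Aut(X)\subset\VF(X)$ (Proposition~\ref{AutX.prop}), so once the restrictions $f|_{X_{j}}$ span an infinite-dimensional space the Lie algebra $\Lie\Aut(X)$ is infinite dimensional, and Lemma~\ref{connectedcomp.lem}(3) ($\dim\GGG<\infty$ if and only if $\dim\Lie\GGG<\infty$) gives the contradiction. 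Finally, your Step 1 is correct but silently uses that $\dim\Aut(X)<\infty$ makes $\Aut(X)^{\circ}$ a (connected) algebraic group, which is Lemma~\ref{connectedcomp.lem}(2) and should be cited; with that, ``no $\Cplus$-subgroup'' indeed forces $\Aut(X)^{\circ}$ to be a torus, as in the paper.
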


The last results concern the automorphism group $\Aut(\An)$ of affine $n$-space. This group has a closed normal subgroup $\SAut(\An)$ consisting  of those automorphism $\f=(f_{1},\ldots,f_{n})$ whose Jacobian determinant $\jac(\f):=\det\left(\frac{\partial f_{i}}{\partial x_{j}}\right)_{(i,j)}$ is equal to 1:
$$
\SAut(\An) := \ker (\jac\colon \Aut(\An) \to \Cst).
$$ 
One could expect that $\SAut(\An)$ is simple as an ind-group, because its Lie algebra is simple, and that $\SAut(\An)$ is the only closed proper normal subgroup of $\Aut(\An)$. This is claimed in \cite{Sh1966On-some-infinite-d,Sh1981On-some-infinite-d}, but the proofs turned out to be not correct (see \cite[Section~10]{FuKr2014On-the-geometry-of}). 
What we can prove here are the following results.

\begin{thm}\lab{thm3}\strut
\be
\item
Let $\phi\colon \Aut(\An) \to \GGG$ be a homomorphism of ind-groups. Then either  $\phi$ factors through $\jac\colon \Aut(\An) \to \Cst$, or $\phi$ is a closed immersion. This means that $\phi(\Aut(\An))\subset \GGG$ is a closed ind-subgroup and  $\phi\colon\Aut(\An) \simto \phi(\Aut(\An))$ is an isomorphism.
\item
Every nontrivial homomorphism $\SAut(\An) \to \GGG$ of ind-groups is a closed immersion. 
\ee
\end{thm}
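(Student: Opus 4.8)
The plan is to prove part~(2) first and to deduce part~(1) from it. For part~(1), restrict a given homomorphism $\phi\colon\Aut(\An)\to\GGG$ to $\SAut(\An)=\ker(\jac)$. If $\phi|_{\SAut(\An)}$ is trivial then, since $\jac\colon\Aut(\An)\to\Cst$ is a split surjection with kernel $\SAut(\An)$ (split by $c\mapsto(x_{1}\mapsto cx_{1})$), the map $\phi$ factors through $\jac$. If $\phi|_{\SAut(\An)}$ is nontrivial, part~(2) shows that it is a closed immersion, and, writing $\Aut(\An)=\SAut(\An)\rtimes\Cst$, I would upgrade this to the assertion that $\phi$ itself is a closed immersion once $\ker\phi$ has been shown to be trivial. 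For the latter: $\ker\phi\cap\SAut(\An)=1$ by part~(2), so $\ker\phi$ embeds into $\Cst$; since $\ker\phi$ is normal and every commutator $[u,s]$ with $u\in\ker\phi$ and $s\in\SAut(\An)$ lies in $\ker\phi\cap\SAut(\An)=1$, the subgroup $\ker\phi$ centralizes $\SAut(\An)$, hence lies in the centralizer of $\SAut(\An)$ in $\Aut(\An)$; that centralizer is contained in $\SAut(\An)$ --- indeed it is trivial for $n\ge2$, because an automorphism commuting with the linear copy of $\SL_{n}(\CC)$ must, degree by degree, be a scalar by Schur's lemma, and a nontrivial scalar $x\mapsto cx$ does not commute with the elementary automorphism $x_{1}\mapsto x_{1}+x_{2}^{2}$ --- so $\ker\phi\subseteq\ker\phi\cap\SAut(\An)=1$.

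For part~(2), let $\phi\colon\SAut(\An)\to\GGG$ be a nontrivial homomorphism; the case $n=1$ is immediate, since $\SAut(\Aone)\cong\Cplus$ has no proper nontrivial homomorphic image in characteristic zero, so assume $n\ge2$. I would first show that $d\phi$ is injective. For this it suffices to show $d\phi\neq0$: then $\ker(d\phi)$ is a proper ideal of $\Lie\SAut(\An)$, which is simple (as recalled in the introduction), hence $\ker(d\phi)=0$. And $d\phi\neq0$, because $\SAut(\An)$ is generated by its one-parameter unipotent subgroups $U\cong\Cplus$, a homomorphism of ind-groups out of $\Cplus$ is determined by its differential in characteristic zero, and thus $d\phi=0$ would force $\phi|_{U}$ to be trivial for every such $U$, making $\phi$ trivial. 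Next, $\ker\phi=1$: it is a closed normal ind-subgroup with $\Lie(\ker\phi)=\ker(d\phi)=0$, hence discrete, so, being normal in $\SAut(\An)$ --- which is connected, being generated by connected subgroups --- it is central, and the center of $\SAut(\An)$ is trivial for $n\ge2$ by the argument with Schur's lemma and an elementary automorphism used above. Hence $\phi$ is injective with injective differential.

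It remains to promote this to the statement that $\phi$ is a closed immersion --- that $\phi(\SAut(\An))$ is a closed ind-subgroup of $\GGG$ and $\phi$ an isomorphism onto it --- and here I expect the main obstacle to lie, since for homomorphisms of ind-groups being ``injective with injective differential'' does not imply being a ``closed immersion'' in general (an injective morphism may have dense image). The feature of $\SAut(\An)$ to exploit is that it is \emph{algebraically generated}: it is exhausted by the images of the multiplication maps of finite strings of algebraic subgroups $H$ --- such as $\SL_{n}(\CC)$, the diagonal torus, and the elementary $\Cplus$'s --- and on each such $H$ the restriction $\phi|_{H}$ is a homomorphism of algebraic groups with trivial kernel, hence a closed immersion with closed algebraic image in $\GGG$. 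One must then verify that $\phi$ carries an admissible exhaustion of $\SAut(\An)$ onto an admissible exhaustion of $\phi(\SAut(\An))$; concretely, the crux is a boundedness statement --- that the preimage under $\phi$ of each algebraic piece $\GGG_{\le k}$ of $\GGG$ is contained in one of the standard bounded pieces $\SAut(\An)\cap\Aut(\An)_{\le d}$ --- after which the images of these (affine, hence a priori merely constructible) pieces can be shown to be closed. Establishing this boundedness, which should rest on the degree filtration of $\Aut(\An)$ and on properties of images of morphisms of affine varieties rather than on soft ind-group generalities, is the technical heart of the argument.
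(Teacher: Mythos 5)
Your reductions are mostly sound: the case split, the factorization through $\jac$ when $\phi|_{\SAut(\An)}$ is trivial, and the proof that $d\phi$ is injective and $\ker\phi=1$ in the nontrivial case all work (with one caveat: your justification that $\SAut(\An)$ is connected ``being generated by connected subgroups'' is essentially the open question whether $\SAut(\An)=\UU(\An)$ raised in the paper; one should instead connect $\f$ to its linear part via $t\mapsto t^{-1}\f(tx)$). But the actual content of the theorem is that $\phi$ is a \emph{closed immersion}, and this is exactly the step you leave open. As you note yourself --- and as Proposition~\ref{FK.prop} shows --- ``injective with injective differential'' does not imply ``closed immersion'' for ind-groups, so everything hinges on the deferred ``boundedness statement'' that $\phi^{-1}$ of each piece $\GGG_k$ lies in a bounded degree piece of $\SAut(\An)$; you do not prove it, and attacking it directly through the degree filtration is not a route that is known to work. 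The same unproved upgrade reappears in your part~(1), where passing from the closed immersion on $\SAut(\An)$ plus injectivity on $\Aut(\An)=\SAut(\An)\rtimes\Cst$ to a closed immersion of $\Aut(\An)$ is again asserted rather than argued.

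The paper closes precisely this gap by a different mechanism: it constructs a left inverse. The key inputs are that $\Ad\colon\Aut(\An)\to\AutL(\Lie\Aut(\An))$ is an \emph{isomorphism of ind-groups} (Proposition~\ref{Ad-is-iso.prop}, proved by explicitly inverting $\Ad$ via the Jacobian matrix and the gradient map) and that $\Ad\colon\Aut(\An)\to\AutL(\Lie\SAut(\An))$ is bijective. Replacing $\GGG$ by $\overline{\phi(\,\cdot\,)}$, the image $L=\im d\phi$ is $\Ad(\GGG)$-stable by density, so $\Ad_{\GGG}$ gives a homomorphism $\GGG\to\AutL(L)\simeq\AutL(\Lie\Aut(\An))$ resp.\ $\AutL(\Lie\SAut(\An))$ whose composition with $\phi$ is an isomorphism; thus $\phi$ admits a retraction $\psi$ with $\psi\circ\phi=\id$. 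Proposition~\ref{retraction.prop} (itself nontrivial: it uses Lemma~\ref{closedsubgroup.lem} and a base change to an uncountable field to show the image is closed, not merely closed piecewise) then yields that $\phi$ is a closed immersion --- the boundedness you were hoping to prove by hand falls out for free, since $\psi$ maps each $\GGG_k$ into a fixed filtration piece. Note also that the paper handles part~(1) directly with the same adjoint argument (the semidirect product plays no role there), and treats the degenerate case via Proposition~\ref{dphi.prop}; your factorization $\phi=\rho\circ\jac$ using the section $c\mapsto(cx_1,x_2,\ldots,x_n)$ is a correct and slightly more elementary variant of that step, but it does not compensate for the missing closed-immersion argument.
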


\begin{thm} \lab{thm5}
\hspace{1em}
\be
\item 
Every injective homomorphism $\phi\colon \Aut(\An) \to \Aut(\An)$ is an isomorphism, and $\phi = \Int\g$ for a well-defined $\g \in \Aut(\An)$.
\item
Every nontrivial homomorphism $\phi\colon \SAut(\An) \to \SAut(\An)$ is an isomorphism, and $\phi = \Int\g$ for a well-defined $\g \in \Aut(\An)$.
\ee
\end{thm}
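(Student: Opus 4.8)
The plan is to use Theorem~\ref{thm3} to reduce both parts to the statement that an injective homomorphism $\phi_0\colon\SAut(\An)\to\SAut(\An)$ must be conjugation by an element of $\Aut(\An)$, and then to prove this by normalizing $\phi_0$ --- conjugating with elements of $\Aut(\An)$, each of which preserves $\SAut(\An)$ since $\jac$ is a homomorphism --- until its differential becomes the identity. Assume $n\ge2$ (the case $n=1$, with $\SAut(\Aone)=\Cplus$ and $\Aut(\Aone)$ the affine group, is an elementary computation). For (2), a nontrivial $\phi\colon\SAut(\An)\to\SAut(\An)$ is a closed immersion --- in particular injective --- by Theorem~\ref{thm3}(2). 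For (1), an injective $\phi\colon\Aut(\An)\to\Aut(\An)$ restricts to an injective homomorphism $\phi|_{\SAut(\An)}$, and $\jac\circ\phi|_{\SAut(\An)}\colon\SAut(\An)\to\Cst$, being an ind-group homomorphism that would be a closed immersion by Theorem~\ref{thm3}(2) if nontrivial (impossible, as $\SAut(\An)$ is nonabelian), is trivial; hence $\phi(\SAut(\An))\subseteq\ker\jac=\SAut(\An)$. Thus in both cases we analyze an injective $\phi_0\colon\SAut(\An)\to\SAut(\An)$, with (1) recovered at the end.

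\emph{Normalization.} Let $T\subset\SL_n\subset\SAut(\An)$ be the diagonal maximal torus, $\dim T=n-1$. Since $\phi_0$ is injective, $\phi_0(T)$ is an $(n-1)$-dimensional torus acting faithfully on $\An$; as torus actions on $\An$ of corank one are linearizable, $\phi_0(T)$ is conjugate in $\Aut(\An)$ into the standard torus $T_n=(\Cst)^n$, and, comparing ranks (and using $\phi_0(T)\subset\SAut(\An)$), conjugate to $T$ itself. Replacing $\phi_0$ by $\Int(\g_0)\circ\phi_0$ for a suitable $\g_0\in\Aut(\An)$, we may assume $\phi_0(T)=T$, so $\phi_0|_T=\alpha$ with $\alpha\in\Aut(T)$. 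Now decompose $\SAut(\An)$ under conjugation by $T$: its $T$-stable one-parameter unipotent subgroups are exactly the root subgroups $U_{\mathbf a,i}\colon x_i\mapsto x_i+c\,\mathbf x^{\mathbf a},\ x_j\mapsto x_j\ (j\ne i)$, with $\mathbf a\in\NN^n$ and $a_i=0$, and $(\mathbf a,i)$ is recovered from the $T$-weight of $U_{\mathbf a,i}$. Since $\phi_0$ intertwines the two $T$-actions through $\alpha$, it permutes the $U_{\mathbf a,i}$ compatibly with $\alpha$ and with their commutation relations; this combinatorial datum encodes the monomial structure of $\Cxn$ and forces $\alpha$ to come from a coordinate permutation, hence to be realized by conjugation with a monomial automorphism of $\An$. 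After such a conjugation, $\phi_0|_T=\id$ and $\phi_0(U_{\mathbf a,i})=U_{\mathbf a,i}$, with $\phi_0$ multiplying the parameter of $U_{\mathbf a,i}$ by some $\lambda_{\mathbf a,i}\in\Cst$; the commutation relations then force $\lambda_{\mathbf a,i}=\mu^{\mathbf a}\mu_i^{-1}$ for some $\mu\in(\Cst)^n$, so after conjugating by the diagonal automorphism $\mathbf x\mapsto(\mu_1x_1,\dots,\mu_nx_n)$ we reach $\phi_0=\id$ on $T$ and on every $U_{\mathbf a,i}$.

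\emph{Conclusion.} Then $d\phi_0=\id$ on $\Lie T$ and on each $\Lie U_{\mathbf a,i}$, and since these generate $\Lie\SAut(\An)$ (the Lie algebra of divergence-free polynomial vector fields) as a Lie algebra, $d\phi_0=\id$. A homomorphism of connected ind-groups in characteristic zero is determined by its differential, and $\SAut(\An)$ is connected, so $\phi_0=\id$; undoing the conjugations shows that the original $\phi_0$ equals $\Int\g$ for some $\g\in\Aut(\An)$, which is in particular surjective --- proving (2). Here $\g$ is well-defined because the centralizer of $\SAut(\An)$ in $\Aut(\An)$ is trivial for $n\ge2$ (an element commuting with all translations is a translation, and commuting also with $\SL_n$ forces it to be $\id$). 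For (1): by (2), $\phi|_{\SAut(\An)}=\Int\g$; after replacing $\phi$ by $\Int(\g^{-1})\circ\phi$ we may assume $\phi|_{\SAut(\An)}=\id$, and then for every $\f\in\Aut(\An)$ the element $\f^{-1}\phi(\f)$ centralizes the normal subgroup $\SAut(\An)$ and hence equals $\id$; thus $\phi=\id$ on $\Aut(\An)$, the original $\phi$ was $\Int\g$ (an automorphism), and $\g$ is well-defined since $\Aut(\An)$ has trivial center.

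\emph{The main difficulty} is the rigidity step: showing that an automorphism of $T$ compatible with the $T$-weights of the root subgroups $U_{\mathbf a,i}$ and with their commutators must lie in $S_n\subset\Aut(T)$, and that a compatible family of scalars $(\lambda_{\mathbf a,i})$ is necessarily of the form $\mu^{\mathbf a}\mu_i^{-1}$; equivalently, that the $T$-weighted configuration of the $U_{\mathbf a,i}$ has no symmetries beyond the monomial automorphisms. A secondary input is the linearizability of corank-one torus actions on $\An$, used to put $\phi_0(T)$ in standard position.
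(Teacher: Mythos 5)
Your overall architecture is sound (reduction of both parts to an injective endomorphism of $\SAut(\An)$ via Theorem~\ref{thm3}, the centralizer argument showing that the conjugating element is unique and that part (1) follows from part (2)), but the core of your argument is missing. The ``rigidity step'' that you yourself flag as the main difficulty --- that an automorphism $\alpha$ of $T$ permuting the weights of the root subgroups $U_{\mathbf a,i}$ compatibly with their commutation relations must come from a coordinate permutation, and that a compatible family of scalars $\lambda_{\mathbf a,i}$ must be of the form $\mu^{\mathbf a}\mu_i^{-1}$ --- is asserted, not proved, and it is not a routine verification: it is essentially the content of the Kraft--Regeta theorem that $\Ad\colon\Aut(\An)\to\AutL(\Lie\SAut(\An))$ is bijective, i.e.\ that every automorphism of the Lie algebra of divergence-free vector fields is induced by an automorphism of $\An$. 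That is precisely the hard external input the paper imports (Remark~\ref{Ad-is-bij.rem}, upgraded in Proposition~\ref{Ad-is-iso.prop}), so your proposal in effect re-derives the theorem modulo an unproven claim of comparable depth to the quoted one. In addition, several auxiliary steps are used without justification or citation: the classification of the $T$-root subgroups as the $U_{\mathbf a,i}$ is Liendo's theorem (the paper's Lemma~\ref{rootsAn.lem} only records existence and distinctness of weights); the linearizability of faithful corank-one torus actions on $\An$ is a genuine theorem (Bia{\l}ynicki-Birula) not appearing in the paper; and the generation of $\Lie\SAut(\An)$ by $\Lie T$ and the root subalgebras $\CC\,\mathbf x^{\mathbf a}\partial_{x_i}$ needs an argument.

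For comparison, the paper's proof avoids all root-system combinatorics: it first notes (Theorem~\ref{thm3}) that $\phi$ is a closed immersion, then shows $d\phi_e$ is bijective by a representation-theoretic trick --- $\phi(\GL_n(\CC))$ is conjugate to the standard $\GL_n(\CC)$ and $\Lie\Aut(\An)\subset\VF(\An)$ is multiplicity-free as a $\GL_n(\CC)$-module (resp.\ $\Lie\SAut(\An)$ as an $\SL_n(\CC)$-module), so an injective equivariant endomorphism of it is onto --- and then concludes via the adjoint diagram of Proposition~\ref{Ad-is-iso.prop} that $\phi$ is an isomorphism; the inner-ness follows at once from the bijectivity of $\Ad$ (Remark~\ref{Ad-is-bij.rem}), which gives $d\phi_e=\Ad(\g)=(d\Int\g)_e$, and Proposition~\ref{dphi.prop}. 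If you want to keep your route, you must actually carry out the weight/cocycle analysis (or cite it, e.g.\ from the Kraft--Regeta paper where such an analysis is performed at the Lie algebra level); as written the proof has a genuine gap at exactly the point where the theorem's difficulty lies.
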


Let us point out the following example from \cite{FuKr2014On-the-geometry-of} showing that bijective homomorphisms of ind-groups are not necessarily isomorphisms. Denote by $\CC\langle x,y \rangle$ the free associative $\CC$-algebra in two generators. Then $\Aut(\CC\langle x,y \rangle)$ is an ind-group, and we have a canonical homomorphism $\pi\colon \Aut(\CC\langle x,y \rangle) \to \Aut(\CC[x,y])$. 

\begin{prop}[\name{Furter-Kraft} \cite{FuKr2014On-the-geometry-of}]\label{FK.prop}
The map $\pi\colon \Aut(\CC\langle x,y \rangle) \to \Aut(\CC[x,y])$ is a bijective homomorphism of ind-groups, but it is not an isomorphism, because it is not an isomorphism on the Lie algebras.
\end{prop}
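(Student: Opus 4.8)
The plan is to prove three things in turn: that $\pi$ is a homomorphism of ind-groups, that $\pi$ is bijective on points, and that $\pi$ is nevertheless not an isomorphism because $d\pi$ is not an isomorphism of Lie algebras. For the first point: abelianization $\CC\langle x,y\rangle \onto \CC[x,y]$ is functorial, so an automorphism $\phi$ of $\CC\langle x,y\rangle$ induces an automorphism $\pi(\phi)$ of $\CC[x,y]$ and $\pi$ is a group homomorphism; writing automorphisms through the images of $x$ and $y$, passing to the abelianization does not increase the degree and is linear in the coefficients, so $\pi$ maps $\Aut(\CC\langle x,y\rangle)_{\le d}$ into $\Aut(\CC[x,y])_{\le d}$ by a morphism of varieties, hence $\pi$ is a morphism of ind-varieties. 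For bijectivity: by the theorem of Jung and van der Kulk, $\Aut(\CC[x,y])$ is the amalgamated free product of the affine subgroup and the triangular (de Jonqui\`eres) subgroup over their intersection; by the theorem of Czerniakiewicz and Makar-Limanov, $\Aut(\CC\langle x,y\rangle)$ has the identical amalgamated structure with the corresponding affine and triangular subgroups. Now $\pi$ carries the affine subgroup of $\Aut(\CC\langle x,y\rangle)$ isomorphically onto that of $\Aut(\CC[x,y])$ and the triangular subgroup isomorphically onto that of $\Aut(\CC[x,y])$ --- in each case it is the identity on the coefficients of the (affine, respectively triangular) maps, since there $q(x)$ means the same in both algebras --- and it is compatible with the common amalgamated subgroup. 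Hence $\pi$ is an isomorphism of the two amalgamated free products as abstract groups, so in particular bijective.

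For the third point: an isomorphism of ind-groups induces an isomorphism of Lie algebras, so it suffices to show that $d\pi \colon \Lie\Aut(\CC\langle x,y\rangle) \to \Lie\Aut(\CC[x,y])$ is not bijective. Both groups carry the $\Cst$-action by conjugation with scalar matrices and $d\pi$ is equivariant, so I compare the two Lie algebras inside the isotypic component corresponding to homogeneous coefficients of degree $2$, i.e. the derivations $D$ with $D(x)$ and $D(y)$ homogeneous of degree $2$. On the commutative side, the Jacobian homomorphism $\jac \colon \Aut(\CC[x,y]) \to \Cst$ has differential $D \mapsto \Div(D)$, so every element of $\Lie\Aut(\CC[x,y])$ has constant divergence; in the degree-$2$ component this forces $\Div(D)=0$, cutting the $6$-dimensional space of such derivations down to a $4$-dimensional one (and one checks, using the Poisson/Hamiltonian description of the divergence-free fields together with the Lie algebras of the affine and triangular subgroups, that all of it is realized).

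On the noncommutative side one computes that the corresponding degree-$2$ component of $\Lie\Aut(\CC\langle x,y\rangle)$ is strictly larger: it still surjects onto the $4$-dimensional commutative space (via obvious lifts such as $D(x)=0,\ D(y)=x^2$ etc.), but it also contains derivations $D$ with $D(x), D(y) \in [\CC\langle x,y\rangle, \CC\langle x,y\rangle]$, for instance $x \mapsto xy-yx,\ y \mapsto 0$, which map to $0$ under $d\pi$. Hence $\ker d\pi \neq 0$, $d\pi$ is not an isomorphism of Lie algebras, and therefore $\pi$ is not an isomorphism of ind-groups.

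The principal obstacle is this last step, specifically the claim that $\Lie\Aut(\CC\langle x,y\rangle)$ really is this large: one must show that a derivation such as $x \mapsto xy-yx$ genuinely occurs as the tangent vector of an algebraic family of automorphisms. This is delicate because such a derivation does not integrate to a one-parameter subgroup --- its formal flow does not preserve the polynomial algebra $\CC\langle x,y\rangle$ --- and, unlike in the commutative case, it is not the derivative of any bounded-length product of affine and elementary one-parameter subgroups. This is tied to the failure of word length to be bounded near the identity in $\Aut(\CC\langle x,y\rangle)$, which is the conceptual reason the bijective homomorphism $\pi$ fails to be an isomorphism.
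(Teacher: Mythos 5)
The paper itself does not prove this proposition --- it quotes it from Furter--Kraft \cite{FuKr2014On-the-geometry-of} --- so your argument has to stand on its own. Your first two steps do: functoriality of abelianization gives the homomorphism of ind-groups, and bijectivity via Jung--van der Kulk on the commutative side, Czerniakiewicz--Makar-Limanov on the free side, and the matching of the two amalgamated product structures under $\pi$ is exactly the standard (and surely the intended) argument.

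The gap is where you yourself locate it, and it is fatal as written: the entire content of the proposition beyond bijectivity is that $d\pi_{e}$ is not an isomorphism, and your argument for this rests on the unproved assertion that the derivation $x\mapsto xy-yx$, $y\mapsto 0$ actually lies in $\Lie\Aut(\CC\langle x,y\rangle)$, i.e.\ occurs as a tangent vector at $e$ of the ind-variety $\Aut(\CC\langle x,y\rangle)$. Your closing remarks only explain why the obvious approaches fail (it does not integrate to a $\Cplus$-action, and it is not the derivative of a bounded-length word in affine and elementary one-parameter families); they do not supply either an explicit algebraic family through $e$ with this derivative or a computation of the tangent spaces $T_{e}\Aut(\CC\langle x,y\rangle)_{\leq d}$ of the filtration pieces that would exhibit a nonzero vector killed by $d\pi_{e}$. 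Without such an argument nothing in your text shows $\ker d\pi_{e}\neq 0$, and the specific degree-$2$ candidate is itself only a guess. I would add that your reduction to injectivity is the right one: since $\pi$ restricts to isomorphisms on the affine and triangular (de Jonqui\`eres) subgroups, the image of $d\pi_{e}$ is a Lie subalgebra of $\Lie\Aut(\Atwo)\simeq\VFc(\Atwo)$ containing the affine and triangular constant-divergence fields, and these generate $\VFc(\Atwo)$ as a Lie algebra (e.g.\ via Poisson brackets of Hamiltonian fields), so $d\pi_{e}$ is automatically surjective and the proposition forces a nontrivial kernel, consisting of derivations with values in the commutator ideal. But producing one such nonzero tangent vector is precisely the nontrivial point carried out in \cite{FuKr2014On-the-geometry-of}, and it is missing here; as it stands your third step is an assertion, not a proof.
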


\par\medskip
\section{Notation and preliminary results}\lab{notation.sec}

The notion of an ind-group goes back to Shafarevich who called these objects {\it infinite dimensional groups}, see \cite{Sh1966On-some-infinite-d,Sh1981On-some-infinite-d}). We refer to \cite{FuKr2014On-the-geometry-of} and  \cite{Ku2002Kac-Moody-groups-t} for basic notation in this context and to \cite{KrZa2014Locally-finite-gro} and \cite{KrRe2014Automorphisms-of-t} for some important results.

\begin{defn}\lab{indvar.def}
An {\it ind-variety} $\VVV$ is a set together with an ascending filtration $\VVV_{0}\subset \VVV_{1}\subset \VVV_{2}\subset \cdots\subset \VVV$ such that the following holds:
\be
\item $\VVV = \bigcup_{k \in \NN}\VVV_{k}$;
\item Each $\VVV_{k}$ has the structure of an algebraic variety;
\item For all $k \in \NN$ the subset  $\VVV_{k}\subset \VVV_{k+1}$ is closed in the Zariski-topology.
\ee
\end{defn}

A {\it morphism} between ind-varieties $\VVV=\bigcup_{k}\VVV_{k}$ and $\WWW=\bigcup_{m}\WWW_{m}$  is a map $\phi\colon \VVV \to \WWW$  such that for every $k$ there is an $m$ such that $\phi(\VVV_{k}) \subset \WWW_{m}$ and that the induced map $\VVV_{k}\to \WWW_{m}$ is a morphism of varieties. {\it Isomorphisms} of ind-varieties are defined in the usual way.

Two filtrations $\VVV = \bigcup_{k \in \NN} \VVV_{k}$ and $\VVV = \bigcup_{k \in \NN} \VVV_{k}'$ are called {\it equivalent\/} if for any $k$ there is an $m$ such that $\VVV_{k}\subset \VVV_{m}'$ is a closed subvariety as well as $\VVV_{k}'\subset \VVV_{m}$. Equivalently,  the identity map $\id \colon \VVV = \bigcup_{k \in \NN} \VVV_{k} \to \VVV = \bigcup_{k \in \NN} \VVV_{k}'$ is an isomorphism of ind-varieties.

An ind-variety $\VVV$ has a natural topology where $S\subset \VVV$ is open, resp. closed, if  $S_{k}:=S \cap \VVV_{k} \subset \VVV_{k}$ is open, resp. closed, for all $k$. Obviously, a locally closed subset $S \subset \VVV$ has a natural structure of an ind-variety. It is called an {\it ind-subvariety}. An ind-variety $\VVV$ is called {\it affine} if all $\VVV_{k}$ are affine. A subset $X\subset \VVV$ is called {\it algebraic} if it is locally closed and contained in some $\VVV_{k}$. Such an $X$ has a natural structure of an algebraic variety.

\begin{exa} \lab{countable vector space.exa}
Any $\CC$-vector space $V$ of countable dimension carries the structure of an (affine) ind-variety by choosing an increasing sequence of finite dimensional subspaces $V_{k}$ such that $V = \bigcup_{k} V_{k}$. Clearly, all these filtrations are equivalent. 

If $R$ is a commutative $\CC$-algebra of countable dimension, $\aa \subset R$ a subspace, e.g. an ideal, and $S\subset\CC[x_{1},\ldots,x_{n}]$ a set of polynomials, then the subset 
\[ \{(a_{1},\ldots,a_{n})\in R^{n}\mid f(a_{1},\ldots,a_{n})\in\aa \text{ for all }f\in S\}\subset R^{n}\] 
is a closed ind-subvariety of $R^{n}$.
\end{exa}

For any ind-variety $\VVV = \bigcup_{k\in\NN}\VVV_{k}$ we can define the tangent space in $x\in \VVV$ in the obvious way. We have $x \in \VVV_{k}$ for $k \geq k_{0}$, and $T_{x}\VVV_{k}\subseteq T_{x}\VVV_{k+1}$ for $k\geq k_{0}$, and then define
$$
T_{x}\VVV := \varinjlim_{k\geq k_{0}} T_{x}\VVV_{k}
$$
which is a vector space of countable dimension. A morphism $\phi\colon \VVV \to \WWW$ induces linear maps $d\phi_{x}\colon T_{x}\VVV \to T_{\phi(x)}\WWW$ for every $x\in X$. Clearly, for a $\CC$-vector space $V$ of countable dimension and a for any $v\in V$ we have $T_{v}V = V$ in a canonical way. 

The {\it product} of two ind-varieties is defined in the obvious way. This allows to define an {\it ind-group} as an ind-variety $\GGG$ with a group structure such that multiplication $\GGG\times\GGG \to \GGG\colon (g,h)\mapsto g\cdot h$, and inverse $\GGG \to \GGG\colon g\mapsto g^{-1}$, are both morphisms.
It is clear that a closed subgroup $G$ of an ind-group $\GGG$ is an algebraic group if and only if $G$ is an algebraic subset of $\GGG$.

If $\GGG$ is an affine ind-group, then $T_{e}\GGG$ has a natural structure of a Lie algebra which will be denoted by $\Lie \GGG$. The structure is obtained by showing that every $A\in T_{e}\GGG$ defines a unique left-invariant vector field $\delta_{A}$ on $\GGG$, see \cite[Proposition 4.2.2, p. 114]{Ku2002Kac-Moody-groups-t}. 

\begin{rem}\lab{simple.rem}
It is known that for $n\geq 2$ the Lie algebra $\Lie\SAut(\An)$ is simple and that $\Lie\SAut(\An) \subset \Lie\Aut(\An)$ is the only proper ideal, see \cite[Lemma~3]{Sh1981On-some-infinite-d}. Moreover, both Lie algebras are generated by the subalgebras $\Lie G$ where $G$ is an algebraic subgroup. 
\end{rem}

\begin{defn} 
An ind-group $\GGG=\bigcup_{k}\GGG_{k}$ is called {\it discrete\/} if $\GGG_{k}$ is finite for all $k$.
Clearly, $\GGG$ is discrete if and only if $\Lie\GGG$ is trivial.
\par\smallskip
An ind-group $\GGG$ is called {\it connected} if for every $g\in\GGG$ there is an irreducible curve $D$ and a morphism $D \to \GGG$ whose image contains $e$ and $g$.
\end{defn}

The next result follows from \cite[Theorem~3.1.1]{FuKr2014On-the-geometry-of} and \cite[Theorem~4.3.2]{KrZa2014Locally-finite-gro}. Here $\VF(X)$ denotes the Lie algebra of (algebraic) vector fields on $X$, i.e. $\VF(X) = \Der(\OOO(X))$, the Lie algebra of derivations of $\OOO(X)$.

\begin{prop}\lab{AutX.prop}
Let $X$ be an affine variety. Then $\Aut(X)$ has a natural structure of an affine ind-group, 
and there is a canonical embedding $\xi\colon\Lie\Aut(X) \into \VF(X)$  of Lie algebras.
\end{prop}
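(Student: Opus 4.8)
The statement is the combination of two results already in the literature—the ind-group structure of $\Aut(X)$ is \cite[Theorem~3.1.1]{FuKr2014On-the-geometry-of} and the existence of $\xi$ is \cite[Theorem~4.3.2]{KrZa2014Locally-finite-gro}—so the plan is to assemble the relevant construction rather than to prove something new.

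For the ind-group structure I would fix a closed embedding $X\subset\AA^{N}$, so that $\OOO(X)$ is generated by the restricted coordinates $x_{1},\ldots,x_{N}$. An endomorphism of $X$ is then the same datum as an $N$-tuple $\f=(f_{1},\ldots,f_{N})\in\OOO(X)^{N}$ with $\f(X)\subseteq X$, and $\Aut(X)$ is identified with the set of pairs $(\f,\g)$ of such tuples satisfying $\f\circ\g=\g\circ\f=\id_{X}$. After fixing a $\CC$-basis of $\OOO(X)$, I would let $\Aut(X)_{k}$ consist of those pairs all of whose coordinate functions lie in the span of the first $k$ basis vectors. The defining conditions ($\f(X)\subseteq X$ and $\f\circ\g=\g\circ\f=\id$) are polynomial in the coefficients, so each $\Aut(X)_{k}$ is a closed, hence affine, subvariety of a finite-dimensional affine space; moreover $\Aut(X)_{k}\subseteq\Aut(X)_{k+1}$ is closed and $\Aut(X)=\bigcup_{k}\Aut(X)_{k}$. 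Since composition and inversion of tuples alter the coefficients only through polynomial operations of controlled degree, multiplication and inverse are morphisms, so $\Aut(X)$ becomes an affine ind-group; and replacing the embedding $X\subset\AA^{N}$ by another one yields an equivalent filtration, so the structure is canonical.

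For the embedding $\xi$ I would use the dual-number description of tangent vectors. Since $\Lie\Aut(X)=T_{e}\Aut(X)=\varinjlim_{k}T_{e}\Aut(X)_{k}$, a tangent vector $A$ at $e$ is represented by a $\CC[\eps]/(\eps^{2})$-point of some $\Aut(X)_{k}$ reducing to $e$, which unwinds to a $\CC[\eps]/(\eps^{2})$-algebra automorphism of $\OOO(X)\otimes_{\CC}\CC[\eps]/(\eps^{2})$ of the form $\id+\eps\,\delta_{A}$; being an algebra homomorphism forces $\delta_{A}\in\Der(\OOO(X))=\VF(X)$. Setting $\xi(A):=\delta_{A}$ gives a well-defined linear map, which is injective because $\delta_{A}=0$ forces $\id+\eps\,\delta_{A}=\id$ and hence $A=0$. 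It then remains to check that $\xi$ is a Lie algebra homomorphism: the bracket on $\Lie\Aut(X)$ is that of left-invariant vector fields on the ind-group $\Aut(X)$ (cf. \cite[Proposition~4.2.2]{Ku2002Kac-Moody-groups-t}), and I would identify it with the commutator of vector fields on $X$ either by a second-order computation over $\CC[\eps_{1},\eps_{2}]/(\eps_{1}^{2},\eps_{2}^{2})$ or, more structurally, by differentiating the conjugation action of $\Aut(X)$ on itself, which under $\xi$ corresponds to the natural action of $\Aut(X)$ on $\VF(X)$.

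The step I expect to be the main obstacle is this last one—matching the two a priori unrelated Lie brackets—where one must track left versus right translations carefully (this is what governs a possible sign) and keep in mind that $T_{e}$ is computed from the \emph{reduced} varieties $\Aut(X)_{k}$. The latter point is also the reason one should only expect an embedding: $\xi$ is in general far from surjective, as already $X=\Cst$ shows, where $\dim\Lie\Aut(X)=1$ whereas $\VF(X)$ is infinite-dimensional.
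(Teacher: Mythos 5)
Your outline is correct, but note that the paper itself offers no proof of this proposition: it is stated as an import, ``follows from \cite[Theorem~3.1.1]{FuKr2014On-the-geometry-of} and \cite[Theorem~4.3.2]{KrZa2014Locally-finite-gro}'', exactly the two references you name in your first sentence. So there is no argument in the paper to compare against; what you have written is a reconstruction of the standard construction that those references carry out. As such it is sound: encoding automorphisms as pairs $(\f,\g)$ of tuples in $\OOO(X)^{N}$ with $\f\circ\g=\g\circ\f=\id$ makes inversion a swap and the defining conditions closed, and since any finite-dimensional subspace of $\OOO(X)$ sits inside the span of finitely many basis vectors, composition does land in some $\Aut(X)_{m}$ with coefficients depending polynomially on the inputs (it is slightly cleaner to use the filtration of $\OOO(X)$ by images of polynomials of degree $\leq k$ from the ambient embedding, which makes your ``controlled degree'' literal and makes the equivalence of filtrations for different embeddings immediate). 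Two small points deserve care. First, in the injectivity of $\xi$: a tangent vector at $e$ is a pair $(a,b)$ of first-order deformations of $\f$ and $\g$, so you should observe that the linearization of $\f\circ\g=\id$ at $e$ forces $b=-a$, whence $\delta_{A}=0$ indeed gives $A=0$. Second, the identification of the bracket on $T_{e}\Aut(X)$ (defined via left-invariant vector fields as in \cite[Proposition~4.2.2]{Ku2002Kac-Moody-groups-t}) with the commutator in $\Der(\OOO(X))$, which you correctly single out as the main obstacle, is precisely the substance of the cited \cite[Theorem~4.3.2]{KrZa2014Locally-finite-gro}; your two suggested routes (a second-order dual-number computation, or differentiating the conjugation action of $\Aut(X)$ on the $\Cplus$- and $G$-actions, i.e.\ on vector fields) are both viable, but as written this step is an acknowledged plan rather than a proof. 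Your closing remarks, that the tangent spaces are those of the reduced varieties $\Aut(X)_{k}$ and that $\xi$ is far from surjective already for $X=\Cst$, are apt.
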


\begin{rem}\lab{LieAutAn.rem}
In case $X = \An$ the embedding $\xi$ identifies $\Lie\Aut(\An))$ with $\VFc(\An)$, the vector fields $\delta = \sum_{i}f_{i}\dxi$ with constant divergence $\Div \delta = \sum_{i}\dfx{f_{i}}{x_{i}} \in \CC$, see \cite[Proposition~3.5.1]{FuKr2014On-the-geometry-of}.
\end{rem}

Another result which we will need is proved in \cite[Proposition~6.5.2]{KrZa2014Locally-finite-gro}.
\begin{prop}\lab{dphi.prop}
Let $\phi,\psi\colon \GGG \to \HHH$ be two homomorphisms of ind-groups. Assume that $\GGG$ is connected and that $d\phi_{e} = d\psi_{e}\colon \Lie\GGG \to \Lie\HHH$. Then $\phi = \psi$.
\end{prop}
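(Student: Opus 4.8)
The plan is to study the subgroup on which $\phi$ and $\psi$ agree,
$$U := \{g \in \GGG \mid \phi(g) = \psi(g)\},$$
and to prove that $U = \GGG$. Since $\phi$ and $\psi$ are homomorphisms, $U$ is a subgroup. It is also closed: the map $\Phi := (\phi,\psi)\colon \GGG \to \HHH\times\HHH$ is a morphism of ind-varieties, the diagonal $\Delta\subseteq\HHH\times\HHH$ is closed (each $\HHH_{k}$ is separated, so $\Delta\cap(\HHH_{k}\times\HHH_{k}) = \Delta_{\HHH_{k}}$ is closed in $\HHH_{k}\times\HHH_{k}$), and $U = \Phi^{-1}(\Delta)$. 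Thus $U$ is a closed ind-subgroup. If in addition $U$ is \emph{open} in $\GGG$ we are done: given $g\in\GGG$, connectedness of $\GGG$ provides an irreducible curve $D$ and a morphism $\rho\colon D\to\GGG$ with $e,g\in\rho(D)$; then $\rho^{-1}(U)$ is a non-empty open and closed subset of the irreducible, hence connected, curve $D$, so $\rho^{-1}(U) = D$ and $g\in\rho(D)\subseteq U$. Hence $U = \GGG$, i.e. $\phi=\psi$.

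So it remains to show that $U$ is open, i.e. that $U\cap\GGG_{k}$ is open in $\GGG_{k}$ for every $k$. By homogeneity this reduces to the statement that $U$ contains a Zariski-open neighbourhood $V$ of $e$: for $x\in U$ the translation isomorphism $\lambda_{x}\colon\GGG\simto\GGG$, $h\mapsto xh$, maps $U$ onto $U$ and $V$ onto an open set $xV\subseteq U$ containing $x$, so intersecting with $\GGG_{k}$ shows $U\cap\GGG_{k}$ is a neighbourhood of each of its points. To produce $V$, fix $k$ and choose $m$ with $\phi(\GGG_{k}),\psi(\GGG_{k})\subseteq\HHH_{m}$, so that $U\cap\GGG_{k}$ underlies the closed subscheme $(\phi|_{\GGG_{k}},\psi|_{\GGG_{k}})^{-1}(\Delta_{\HHH_{m}})$ of the variety $\GGG_{k}$; let $\J\subseteq\OOO_{\GGG_{k},e}$ be its ideal at $e$. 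The crucial claim is that $\phi$ and $\psi$ agree on every infinitesimal neighbourhood of $e$, which says exactly that $\J\subseteq\mm_{e}^{n+1}$ for all $n$; by the Krull intersection theorem (each $\GGG_{k}$ is Noetherian) this forces $\J = 0$, hence $U\cap\GGG_{k}$ contains a Zariski-open neighbourhood of $e$, and these glue to the required $V$.

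Everything thus comes down to the crucial claim: \emph{$\phi$ and $\psi$ induce the same morphism on every infinitesimal neighbourhood of $e$ in $\GGG$} — and this is the real obstacle. It is the ind-group version of the classical fact that, in characteristic zero, a homomorphism of connected algebraic groups is determined by its differential at $e$; note that it is essential to control the whole formal neighbourhood and not merely to know $\Lie U = \Lie\GGG$, which alone would not suffice to produce an open set. One proves it by passing to the formal group: the multiplication of $\GGG$ turns the directed system of infinitesimal neighbourhoods of $e$ into a formal group $\widehat{\GGG}$, and $\phi,\psi$ induce homomorphisms of formal groups $\widehat{\phi},\widehat{\psi}\colon\widehat{\GGG}\to\widehat{\HHH}$ with $d\widehat{\phi}_{e}=d\phi_{e}=d\psi_{e}=d\widehat{\psi}_{e}$. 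In characteristic zero the logarithm identifies a formal group with its Lie algebra endowed with the Baker--Campbell--Hausdorff law, compatibly with homomorphisms, and under this identification a homomorphism of formal groups becomes the linear map given by its differential; hence $\widehat{\phi}=\widehat{\psi}$. (Alternatively one argues directly by induction on $n$, the inductive step being an elementary computation with the group law that uses characteristic zero.) The only point needing care is that $\Lie\GGG$ and $\Lie\HHH$ may be infinite-dimensional, but the Baker--Campbell--Hausdorff formula is a purely formal identity and $\GGG=\bigcup_{k}\GGG_{k}$ is the directed colimit of its finite-dimensional pieces, so no convergence issue arises.
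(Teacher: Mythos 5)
The paper itself contains no proof of this proposition: it is quoted from \cite{KrZa2014Locally-finite-gro}, Proposition~6.5.2, so there is no in-text argument to measure yours against. Your reduction is sound and well organized: the equalizer $U=\{g\mid \phi(g)=\psi(g)\}$ is a closed subgroup; if for every $k$ it contains a Zariski-open neighbourhood of $e$ in $\GGG_{k}$, then for $x\in U$ the translation $\lambda_{x^{-1}}$ maps $\GGG_{k}$ into some $\GGG_{j}$ and pulling back the neighbourhood of $e$ in $\GGG_{j}$ shows that $U\cap\GGG_{k}$ is open, and the curve definition of connectedness then gives $U=\GGG$. (As literally written, the union over $k$ of your neighbourhoods need not be open in $\GGG$, so there is no single open $V$ to ``glue''; but the per-$\GGG_{k}$ translation argument just indicated repairs this, so it is only a slip of formulation.) Likewise the passage from ``$\J\subset\mm_{e}^{\,n+1}$ for all $n$'' to an open neighbourhood of $e$, via the Krull intersection theorem and coherence of the ideal sheaf of the equalizer scheme, is correct.

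The genuine gap is in the crucial claim, which is the entire content of the proposition, and your justification of it does not stand as stated. The assertion ``the logarithm identifies a formal group with its Lie algebra with the BCH law'' is a theorem about \emph{smooth} (power-series) formal groups; the formal completion $\widehat\GGG$ of an ind-group at $e$ is not known to be of this form -- that would amount to formal smoothness of $\GGG$ at $e$, a delicate matter for ind-groups -- and your remark that $\GGG=\bigcup_{k}\GGG_{k}$ is a colimit of finite-dimensional pieces does not help, because the pieces $\GGG_{k}$ are not groups, so the finite-dimensional theory cannot be applied to them; the worry was never convergence but the existence of $\exp/\log$ at all. The parenthetical ``elementary induction on $n$'' is also not elementary: in the inductive step over a square-zero extension $A\to A/I$ one is left with a homomorphism $c\colon\widehat\GGG(A)\to \Lie\HHH\otimes I$ (central values) vanishing on $\ker(\widehat\GGG(A)\to\widehat\GGG(A/I))$, and concluding $c=0$ requires structural input beyond the group law. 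The claim is nevertheless true, and your skeleton can be completed, for instance Hopf-theoretically: the algebra of distributions $\varinjlim_{n,k}\bigl(\OOO_{\GGG_{k},e}/\mm_{e}^{\,n+1}\bigr)^{*}$ is a connected cocommutative Hopf algebra whose primitives are exactly $\Lie\GGG$; by the Cartier--Gabriel--Kostant theorem (valid in characteristic zero without finiteness hypotheses) it is the enveloping algebra of $\Lie\GGG$, hence generated by its primitives; the Hopf algebra maps induced by $\phi$ and $\psi$ agree on the primitives because $d\phi_{e}=d\psi_{e}$, hence coincide; and since each $\OOO_{\GGG_{k},e}/\mm_{e}^{\,n+1}$ is finite dimensional, duality recovers $\phi$ and $\psi$ on every infinitesimal neighbourhood of $e$, which is precisely your claim. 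With that substitution (essentially what the cited reference provides) the proof is complete.
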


A final result which we will use is the following (see \cite[Lemma~6.1]{KrZi2014Small-affine-varie}). Denote by {\it $\Aff_{n}\subset \Aut(\An)$ the  subgroup of affine transformations}, i.e. $\Aff_{n}=\GL_{n}(\CC)\ltimes (\Cn)^{+}$. Similarly, the subgroup $\SAff_{n}\subset \Aff_{n}$ consists of the transformations with determinant $1$, i.e. $\SAff_{n}=\SL_{n}(\CC)\ltimes (\Cn)^{+}$.

\begin{prop}\lab{Aff.prop}
Let $X$ be an affine variety with a faithful action of $\SAff_{n}$. If $\dim X \leq n$, then $X$ is $\SAff_{n}$-isomorphic to $\An$.
\end{prop}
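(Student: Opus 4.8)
The plan is to recover the structure of $X$ from the translation subgroup $T:=(\Cn)^{+}\subset\SAff_n$ together with a dimension estimate on a Grassmannian, and then to transfer the action of $\SL_n(\CC)$ using that $H^{1}(\SL_n(\CC),\Cn)=0$. Throughout I would take $X$ irreducible — which is necessary, since $\An\sqcup\{\mathrm{pt}\}$ carries a faithful $\SAff_n$-action but is not isomorphic to $\An$ — and $n\geq2$ (for $n=1$ one has $\SAff_1=\mathbb{G}_{a}$ and the assertion is the classical fact that $\Aone$ is the only affine curve admitting a nontrivial $\mathbb{G}_{a}$-action). Note that $T$ acts faithfully because $\SAff_n$ does.

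First I would choose a point $x\in X$ not fixed by $T$ and consider its stabilizer $T_{x}:=\Stab_{T}(x)$, which is a linear subspace of $\Cn$ (closed subgroups of the vector group $(\Cn)^{+}$ are subspaces, as $\operatorname{char}\CC=0$), say $\dim T_{x}=k$ with $0\leq k\leq n-1$. Since $T$ is normal and abelian in $\SAff_n$, a short computation gives $T_{y}=g\cdot T_{x}$ for every point $y=\sigma x$ of the orbit $O:=\SAff_n\cdot x$, where $g\in\SL_n(\CC)$ is the linear part of $\sigma$. Hence $y\mapsto[T_{y}]$ is a well-defined $\SL_n(\CC)$-equivariant morphism $O\to\operatorname{Gr}(k,n)$ onto the Grassmannian (its image is the $\SL_n(\CC)$-orbit of $[T_{x}]$, which is everything), all of whose fibres are mutually isomorphic, and the fibre containing $x$ contains $T\cdot x\cong\Cn/T_{x}$, of dimension $n-k$. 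Therefore
\[
n\ \geq\ \dim X\ \geq\ \dim O\ \geq\ \dim\operatorname{Gr}(k,n)+(n-k)\ =\ (k+1)(n-k),
\]
and the inequality $(k+1)(n-k)\leq n$ forces $k=0$ or $k=n-1$; in both cases $\dim X=\dim O=n$ and $O$ is dense in $X$.

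If $k=0$, then $T\cdot x\cong\Cn$ is a closed subvariety of the irreducible $n$-dimensional variety $X$, so $X=T\cdot x$ and $T$ acts simply transitively. Identifying $X$ with $\Cn$ via $v\mapsto v\cdot x$, the group $T$ acts by translations, and the identity $g\cdot(v\cdot p)=(g\cdot v)\cdot(g\cdot p)$ shows that every $g\in\SL_n(\CC)$ acts as $v\mapsto g\cdot v+c(g)$ with $c(g)=g\cdot x$, an algebraic $1$-cocycle. Because $\SL_n(\CC)$ is reductive in characteristic zero, $H^{1}(\SL_n(\CC),\Cn)=0$, so $c$ is a coboundary; conjugating the whole action by an appropriate translation from $T\subset\SAff_n$ then makes $\SL_n(\CC)$ act linearly while keeping $T$ acting by translations, so the $\SAff_n$-action on $X\cong\Cn$ is the standard one on $\An$, as desired.

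The case $k=n-1$ is where the real work lies, and I expect it to be the main obstacle. Here $O\to\operatorname{Gr}(n-1,n)=\mathbb{P}^{n-1}$ has one-dimensional fibres, and each fibre $F$ is a smooth connected curve on which $T$ acts through the one-dimensional quotient $T/W$ ($W$ the hyperplane parametrized by the base point) with a dense orbit. Such an $F$ must be $\Aone$: it cannot be $\mathbb{G}_{m}$, which admits no nontrivial $\mathbb{G}_{a}$-action, nor $\mathbb{P}^{1}$, for then $O$ would be a $\mathbb{P}^{1}$-bundle over $\mathbb{P}^{n-1}$, hence a complete variety of positive dimension, which no quasi-affine variety can be. Thus $O$ is an $\Aone$-bundle over $\mathbb{P}^{n-1}$ whose structure group lies in the affine group of the line and whose linear part is the line bundle with fibre $\Cn/W$ over the point of $\operatorname{Gr}(n-1,n)$ given by the hyperplane $W$, namely the Euler quotient bundle $\OOO_{\mathbb{P}^{n-1}}(1)$; consequently $O$ is a torsor under $\OOO_{\mathbb{P}^{n-1}}(1)$. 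Since $H^{1}(\mathbb{P}^{n-1},\OOO_{\mathbb{P}^{n-1}}(1))=0$, this torsor is trivial, so $O$ is the total space of $\OOO_{\mathbb{P}^{n-1}}(1)$ and contains its zero section $\mathbb{P}^{n-1}$. But $O$, being locally closed in the affine variety $X$, is quasi-affine and so contains no complete positive-dimensional subvariety — a contradiction. Hence $k=0$, which completes the proof.
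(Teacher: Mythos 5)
You should know at the outset that the paper itself contains no proof of this proposition: it is imported wholesale from \cite{KrZi2014Small-affine-varie}, Lemma~6.1 (listed as in preparation), so there is no in-paper argument to compare yours against; I can only judge your proposal on its own terms, and on those terms it is essentially correct. The key steps all check out: the stabilizer $T_{x}$ of the translation group $T=(\Cn)^{+}$ is a linear subspace, $T_{\sigma x}=gT_{x}$ for $\sigma$ with linear part $g$, the induced equivariant map $O\to\operatorname{Gr}(k,n)$ is surjective with fibres containing $T$-orbits of dimension $n-k$, and the estimate $(k+1)(n-k)\leq n$ indeed forces $k\in\{0,n-1\}$ with $\dim O=n$. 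In the case $k=0$ the identification of $X$ with a $T$-torsor plus vanishing of rational $H^{1}(\SL_{n}(\CC),\Cn)$ gives the standard action, and in the case $k=n-1$ the orbit is a torsor under the quotient line bundle $\OOO_{\mathbb{P}^{n-1}}(1)$, whose $H^{1}$ vanishes, so the orbit would contain a copy of $\mathbb{P}^{n-1}$, contradicting quasi-affineness. Your insistence on irreducibility is also the right call: the literal statement fails for $\An\sqcup\{\mathrm{pt}\}$, and in the paper the proposition is only ever applied to an irreducible $X$ (case (a) of the proof of Theorem~\ref{thm.gen}) or to an irreducible component $X_{1}$ (case (b)), so your hypothesis matches the intended use.

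A few places rely on standard facts you use tacitly and should cite or prove: closedness of orbits of unipotent groups on affine varieties (Kostant--Rosenlicht), needed to get $X=T\cdot x$ when $k=0$; Zariski-local triviality of the homogeneous fibration $\SAff_{n}/H\to\SAff_{n}/Q\simeq\SL_{n}(\CC)/P\simeq\mathbb{P}^{n-1}$ (available because parabolic quotients admit local sections), which is what legitimizes both the ``$\mathbb{P}^{1}$-bundle would be complete'' remark and the $\Aff_{1}$-bundle structure; and the classification of torsors under a line bundle $L$ by $H^{1}(Y,L)$. Two simplifications are available in the $k=n-1$ case: each fibre is itself locally closed in the affine $X$, hence quasi-affine, so it cannot be $\mathbb{P}^{1}$ without any bundle argument; and since $T/W$ acts on the one-dimensional fibre with trivial stabilizers, every $T/W$-orbit is open, so the fibre is a single orbit directly. (More concretely, the orbit in that case is the space of affine hyperplanes in $\An$, i.e.\ $\mathbb{P}^{n}$ minus a point, which visibly contains complete subvarieties --- your torsor computation is a coordinate-free version of this.) None of these affect correctness; they only streamline the write-up.
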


\begin{rem}
It is shown in \cite[Lemma~6.1]{KrZi2014Small-affine-varie} that the same holds if we replace $\SAff_{n}$ by $\Aff_{n}$. Using Theorem~\rref{thm5} we see that we can replace $\SAff_{n}$ by $\Aut(\An)$ or $\SAut(\An)$ as well.
\end{rem}

\par\smallskip
\section{The adjoint representation}
If $L$ is a finitely generated Lie algebra, then $\AutL(L)$ has a natural structure of an ind-group defined in the following way (see \cite{FuKr2014On-the-geometry-of}). Choose a finite-dimensional subspace $L_{0} \subset L$ which generates $L$ as a Lie algebra. Then the restriction map $\EndL(L) \to \Hom(L_{0},L)$ is injective and the image is a closed affine ind-subvariety. (To see this write $L$ as the quotient of the free Lie algebra $F(L_{0})$ over $L_{0}$ modulo an ideal $I$.) Choosing a filtration $L = \bigcup_{k\geq 0}L_{k}$ by finite-dimensional subspaces, we set $\EndL(L)_{k}:=\{\alpha\in\EndL(L) \mid \alpha(L_{0})\subset L_{k}\}$ which is a closed subvariety of $\Hom(L_{0},L_{k})$ (see Example~\ref{countable vector space.exa}).
Then we define the ind-structure on $\AutL(L)$ by identifying $\AutL(L)$ with the closed subset 
$$
\{(\alpha,\beta) \in \EndL(L)\times\EndL(L) \mid \alpha\circ \beta = \beta\circ \alpha = \id_{L}\} \subset \EndL(L)\times\EndL(L),
$$
i.e. 
$$
\AutL(L)_{k}:=\{\alpha\in\AutL(L) \mid  \alpha,\alpha^{-1}\in\EndL(L)_{k}\}.
$$
It follows that $\AutL(L)$ is an affine ind-group with the usual functorial properties.
\begin{lem}\lab{indrep.lem}
Let $\rho\colon \GGG \to \AutL(L)$ be an abstract homomorphism where $L$ is a finitely generated Lie algebra. Then $\rho$ is a homomorphism of ind-groups if and only if $\rho$ is an ind-representation, i.e. the map $\rho\colon \GGG \times L \to L$ is a morphism of ind-varieties.
\end{lem}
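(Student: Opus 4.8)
The plan is to prove both implications by unwinding the ind-structure on $\AutL(L)$, which by construction records an element only through its restriction to the generating subspace $L_{0}$ via $\EndL(L)\hookrightarrow\Hom(L_{0},L)$, and comparing it with the product ind-structure on $\GGG\times L$. Fix a finite-dimensional generating subspace $L_{0}\subset L$ and a filtration $L=\bigcup_{k}L_{k}$ by finite-dimensional subspaces, which after reindexing we may assume satisfies $L_{0}\subset L_{k}$ for all $k$. The only structural fact used is that, since $L_{0}$ generates $L$, the finite-dimensional subspaces $M_{1}:=L_{0}$, $M_{i+1}:=M_{i}+[L_{0},M_{i}]$ increase with union $L$, so that for each $k$ there is an integer $N=N(k)$ with $L_{k}\subset M_{N}$.

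For the implication that an ind-group homomorphism $\rho$ yields a morphism $\rho\colon\GGG\times L\to L$: fix $k$, pick $m$ with $\rho(\GGG_{k})\subset\AutL(L)_{m}$ and $\GGG_{k}\to\AutL(L)_{m}$ a morphism, and compose with $\AutL(L)_{m}\hookrightarrow\Hom(L_{0},L_{m})$ and the bilinear, hence morphic, evaluation $\Hom(L_{0},L_{m})\times L_{0}\to L_{m}$ to see that $\GGG_{k}\times L_{0}\to L_{m}$, $(g,v)\mapsto\rho(g)v$, is a morphism. Since each $\rho(g)$ is a Lie homomorphism one has $\rho(g)(M_{i+1})=\rho(g)(M_{i})+[\rho(g)(L_{0}),\rho(g)(M_{i})]$, so an induction on $i$ --- at each step choosing a basis of $M_{i+1}$ drawn from $M_{i}$ and from brackets $[a,w]$ with $a,w$ basis vectors of $L_{0},M_{i}$, and using that the Lie bracket restricted to a pair of finite-dimensional subspaces is bilinear with finite-dimensional image --- produces indices $m_{i}$ with $\rho(\GGG_{k})(M_{i})\subset L_{m_{i}}$ and $\GGG_{k}\times M_{i}\to L_{m_{i}}$ a morphism. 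Taking $i=N(k)$ and restricting to $L_{k}\subset M_{N(k)}$ gives that $\GGG_{k}\times L_{k}\to L_{m_{N(k)}}$ is a morphism, so $\rho\colon\GGG\times L\to L$ is a morphism of ind-varieties.

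For the converse, assume $\rho\colon\GGG\times L\to L$ is a morphism. Fix $k$; since inversion on $\GGG$ is a morphism there is $k'\geq k$ with $\GGG_{k}^{-1}\subset\GGG_{k'}$, and by hypothesis there is $m$ with $\rho(\GGG_{k'}\times L_{k'})\subset L_{m}$ and $\GGG_{k'}\times L_{k'}\to L_{m}$ a morphism. Restricting to $\GGG_{k'}\times L_{0}$ and using the universal property of $\Hom(L_{0},L_{m})$, the map $g\mapsto\rho(g)|_{L_{0}}$ is a morphism $\GGG_{k'}\to\Hom(L_{0},L_{m})$, which lands in $\EndL(L)_{m}$ because each $\rho(g)$ is a Lie endomorphism by hypothesis. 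Applying this to $g$ and to $g^{-1}\in\GGG_{k'}$ shows $\rho(g),\rho(g)^{-1}=\rho(g^{-1})\in\EndL(L)_{m}$, i.e. $\rho(\GGG_{k})\subset\AutL(L)_{m}$, and $g\mapsto(\rho(g)|_{L_{0}},\rho(g^{-1})|_{L_{0}})$ is a morphism $\GGG_{k}\to\AutL(L)_{m}\subset\Hom(L_{0},L_{m})\times\Hom(L_{0},L_{m})$, the second coordinate being inversion followed by the morphism just obtained. Hence $\rho$ is a homomorphism of ind-groups.

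The only genuinely nontrivial point is the induction in the first implication: the ind-group structure of $\AutL(L)$ controls only the action on the finite-dimensional piece $L_{0}$, so to recover morphicity of the action on all of $L$ one must argue that the action on $L_{k}$ is produced from the action on $L_{0}$ by a number of bracket operations bounded in terms of $k$ alone (not $g$), each carried out inside finite-dimensional subspaces where morphicity is automatic. Everything else is a direct translation of the definitions of morphism of ind-varieties and of the ind-structures on $\EndL(L)$ and $\AutL(L)$.
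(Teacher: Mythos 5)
Your proof is correct and follows essentially the same route as the paper: both directions are a direct unwinding of the ind-structure on $\AutL(L)$ through restriction to the generating subspace $L_{0}$, with the inverse handled by passing to a piece containing $\GGG_{k}^{-1}$. The only difference is that your bracket-induction extending morphicity from $\GGG_{k}\times L_{0}$ to $\GGG_{k}\times L_{k}$ spells out a step the paper leaves implicit (its proof of that direction stops at the assertion that $\GGG_{j}\times L_{0}\to L_{k}$ is a morphism), so you have filled in a detail rather than taken a different approach.
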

\begin{proof} 
Assume that $L$ is generated by the finite dimensional subspace $L_{0}\subset L$.
If $\GGG = \bigcup_{j}\GGG_{j}$ and if $\rho\colon \GGG \times L \to L$ is a morphism, then, for any $j$, there is a $k=k(j)$ such that $\rho(\GGG_{j}\times L_{0})\subset L_{k}$ and $\rho(\GGG_{j}^{-1}\times L_{0}) \subset L_{k}$. Hence, $\rho(\GGG_{j})\subset \AutL(L)_{k}$, and the map $\GGG_{j}\to \Hom(L_{0},L_{k})$ is clearly a morphism.

Now assume that $\GGG \to \AutL(L)$ is a homomorphism of ind-groups. Then, for any $j$, there is a $k=k(j)$ such that $\rho(\GGG_{j}) \subset \AutL(L)_{k}\into \Hom(L_{0},L_{k})$. Hence, $\rho(\GGG_{j}\times L_{0})\subset L_{k}$, and $\GGG_{j}\times L_{0} \to L_{k}$ is a morphism.
\end{proof}
The {\it adjoint representation\/} $\Ad\colon \GGG \to \AutL(\Lie\GGG)$ of an ind-group $\GGG$ is defined in the usual way: $\Ad g := (d\Int  g)_{e}\colon\Lie\GGG \simto\Lie\GGG$ where $\Int g$ is the inner automorphism $ h\mapsto  g  h  g^{-1}$.
\begin{prop} 
For any ind-group $\GGG$ the canonical map $\Ad\colon \GGG \to \AutL(\Lie\GGG)$ is a homomorphism of ind-groups.
\end{prop}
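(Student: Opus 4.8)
The plan is to reduce, via Lemma~\ref{indrep.lem}, to a statement about differentiating conjugation in a family. Note first that $\Ad$ is a group homomorphism into $\AutL(\Lie\GGG)$: since $\Int(gh)=\Int(g)\circ\Int(h)$, the chain rule gives $\Ad(gh)=\Ad(g)\circ\Ad(h)$, and each $\Ad(g)=d(\Int g)_{e}$ is a \emph{Lie algebra} automorphism of $\Lie\GGG$ because $\Int g$ is an automorphism of the (affine) ind-group $\GGG$ and the differential at $e$ of a homomorphism of affine ind-groups is a homomorphism of Lie algebras (this is part of the general formalism, cf.\ \cite{Ku2002Kac-Moody-groups-t}). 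Hence, by Lemma~\ref{indrep.lem}, it remains only to show that the action map
\[
\alpha\colon \GGG\times\Lie\GGG \longrightarrow \Lie\GGG,\qquad (g,A)\longmapsto \Ad(g)(A)=d(\Int g)_{e}(A),
\]
is a morphism of ind-varieties.

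The key observation is that $\alpha$ arises by differentiating at $e$, in a family over the first factor, the conjugation map
\[
c\colon \GGG\times\GGG \longrightarrow \GGG,\qquad (g,h)\longmapsto ghg^{-1},
\]
which is a morphism of ind-varieties, being a composition of the structure morphisms of $\GGG$. It satisfies $c(g,e)=e$ for every $g$, and $c(g,-)=\Int g$, so $d\bigl(c(g,-)\bigr)_{e}=\Ad(g)$. Thus the claim follows from the general fact that \emph{for a morphism of affine varieties $f\colon Y\times Z\to W$ with $f(y,z_{0})=w_{0}$ for all $y\in Y$, the map $Y\times T_{z_{0}}Z\to T_{w_{0}}W$, $(y,v)\mapsto d\bigl(f(y,-)\bigr)_{z_{0}}(v)$, is a morphism of varieties.} One proof of this uses a universal tangent vector: choose a closed embedding $Z\hookrightarrow\AA^{N}$ sending $z_{0}$ to the origin; then for the linear subspace $U:=T_{0}Z\subset\AA^{N}$ the assignment $(v,t)\mapsto t\,v$ is a well-defined morphism $\sigma\colon U\times\Spec\CC[\eps]\to Z$ (by the very definition of the tangent space), so $f\circ(\id_{Y}\times\sigma)$ is a morphism $Y\times U\times\Spec\CC[\eps]\to W$ whose reduction modulo $\eps$ is the constant map $w_{0}$; such a morphism corresponds to a morphism $Y\times U\to T_{w_{0}}W$, and by construction it is precisely $(y,v)\mapsto d\bigl(f(y,-)\bigr)_{z_{0}}(v)$.

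To conclude, I would transfer this to the ind-level piece by piece. Fix indices $j$ and $k$ with $e\in\GGG_{k}$; since $c$ is a morphism of ind-varieties there is an $m$ with $c(\GGG_{j}\times\GGG_{k})\subset\GGG_{m}$, and $c$ restricts to a morphism of affine varieties $\GGG_{j}\times\GGG_{k}\to\GGG_{m}$ with $c(g,e)=e$ for all $g\in\GGG_{j}$. The fact above then yields a morphism $\GGG_{j}\times T_{e}\GGG_{k}\to T_{e}\GGG_{m}$ which, under the inclusions $T_{e}\GGG_{k},\,T_{e}\GGG_{m}\subset\Lie\GGG$, is the restriction of $\alpha$. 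As $k\to\infty$ the products $\GGG_{j}\times T_{e}\GGG_{k}$ exhaust $\GGG\times\Lie\GGG$ by a filtration equivalent to the product filtration, so $\alpha$ is a morphism of ind-varieties, and Lemma~\ref{indrep.lem} finishes the proof.

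The step I expect to require the most care is this last transfer: $\Int g$ need not preserve the filtration $\{\GGG_{k}\}$, so one cannot differentiate $\Int g$ ``globally at once'', and one must check that the relative differentials on the finite pieces $\GGG_{j}\times\GGG_{k}$ are compatible and assemble to a genuine morphism of ind-varieties into $\Lie\GGG$ — which is exactly what the universal tangent vector construction delivers. (A minor point: invoking Lemma~\ref{indrep.lem} tacitly assumes, as does the definition of the ind-structure on $\AutL(\Lie\GGG)$, that $\Lie\GGG$ is finitely generated; in general one replaces the finite-dimensional generating subspace $L_{0}$ by an exhausting sequence of finite-dimensional subspaces and runs the same argument.)
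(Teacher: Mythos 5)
Your proof is correct and follows the paper's approach: both reduce via Lemma~\ref{indrep.lem} to showing that $(g,A)\mapsto\Ad(g)A$ is a morphism of ind-varieties, and both establish this on filtration pieces $\GGG_{j}\times\GGG_{k}$ (with $c(g,e)=e$) using the fact that for a morphism $\Phi\colon X\times Y\to Z$ of affine varieties with $\Phi_{x}(y_{0})=z_{0}$ for all $x$, the relative differential $x\mapsto d_{y_{0}}\Phi_{x}$ depends morphically on $x$ — this is exactly the paper's Lemma~\ref{Ad.lem}. The only deviation is that you prove this auxiliary fact via a universal tangent vector over $\Spec\CC[\eps]$, whereas the paper does a direct coordinate computation; both are valid.
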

\begin{proof} Let $\gamma\colon\GGG\times\GGG \to \GGG$ denote the morphism $(g,h)\mapsto g h g^{-1}$.
For any $g \in \GGG$, the map $\gamma_{g}\colon\GGG \to \GGG$, $h\mapsto g h g^{-1}$, is an isomorphism of ind-groups, and its differential $\Ad(g)=(d\gamma_{g})_{e}\colon\Lie\GGG \to \Lie\GGG$ is an isomorphism of Lie algebras. If $\GGG = \bigcup_{k}\GGG_{k}$, then for any $p,q \in\NN$ there is an $m\in\NN$ such that $\gamma \colon \GGG_{p}\times \GGG_{p} \to \GGG_{m}$. Clearly, $\Ad g$ for $g \in \GGG_{p}$ is then given by $(d\gamma_{g})_{e}\colon T_{e}\GGG_{q}\to T_{e}\GGG_{m}$, and the map $\GGG_{k}\to\Hom(T_{e}\GGG_{q},\GGG_{m})$ is a morphism, by the following lemma. Now the claim follows from Lemma~\ref{indrep.lem}.
\end{proof}

\begin{lem}\lab{Ad.lem}
Let $\Phi\colon X \times Y \to Z$ be a morphism of affine varieties and  set $\Phi_{x}(y):=\Phi(x,y)$. Assume that there exist $y_{0}\in Y$ and $z_{0}\in Z$ such that $\Phi_{x}(y_{0}) = z_{0}$ for all $x\in X$. Then the induced map $X \to \Hom(T_{y_{0}}Y, T_{z_{0}}Z)$, $x \mapsto d_{y_{0}}\Phi_{x}$,  is a morphism.
\end{lem}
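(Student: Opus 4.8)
The plan is to unwind the definition of the differential and reduce the claim to the observation that the entries of a Jacobian matrix depend algebraically on the parameter $x$.

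First I would make the situation concrete. Choose closed embeddings $Y\hookrightarrow\AA^{m}$ and $Z\hookrightarrow\AA^{\ell}$; translating in the ambient affine spaces we may assume $y_{0}=0$ and $z_{0}=0$. Then $\Phi$ is given by $\ell$ regular functions $\Phi_{k}\in\OOO(X\times Y)=\OOO(X)\otimes\OOO(Y)$, and, lifting the coordinates $y_{1},\dots,y_{m}$ to polynomial variables, we may write $\Phi_{k}=\sum_{\alpha}c_{k\alpha}(x)\,y^{\alpha}$ with $c_{k\alpha}\in\OOO(X)$ and only finitely many nonzero. The hypothesis $\Phi(x,y_{0})=z_{0}$ for all $x$ says exactly that the constant terms vanish, i.e. $c_{k0}=0$ in $\OOO(X)$.

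Next I would recall that under the canonical inclusions $T_{y_{0}}Y\subset T_{y_{0}}\AA^{m}=\CC^{m}$ and $T_{z_{0}}Z\subset T_{z_{0}}\AA^{\ell}=\CC^{\ell}$ the map $d_{y_{0}}\Phi_{x}$ is the restriction of the linear map $\CC^{m}\to\CC^{\ell}$ given by the Jacobian $J(x)=\big(\frac{\partial\Phi_{k}}{\partial y_{j}}(x,0)\big)_{k,j}=\big(c_{k,e_{j}}(x)\big)_{k,j}$. Its entries lie in $\OOO(X)$, so $x\mapsto J(x)$ is a morphism $X\to\M_{\ell\times m}(\CC)=\Hom(\CC^{m},\CC^{\ell})$. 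The polynomial lift is not canonical, but two lifts of $\Phi_{k}$ differ by an element of $I(Y)\otimes\OOO(X)$, and once the corresponding Jacobian entries are restricted to $T_{y_{0}}Y$ they agree, since $T_{y_{0}}Y$ is the common kernel of the linear parts of the generators of $I(Y)$; so the composite map constructed below is well defined.

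Finally, functoriality of the differential applied to $\Phi_{x}\colon Y\to Z$, $\Phi_{x}(y_{0})=z_{0}$, gives $d_{y_{0}}\Phi_{x}(T_{y_{0}}Y)\subset T_{z_{0}}Z$ for every $x$. Composing the morphism $x\mapsto J(x)$ with the linear (hence morphism) restriction map $\Hom(\CC^{m},\CC^{\ell})\to\Hom(T_{y_{0}}Y,\CC^{\ell})$ therefore yields a morphism $X\to\Hom(T_{y_{0}}Y,\CC^{\ell})$ whose image lies in the closed linear subspace $\Hom(T_{y_{0}}Y,T_{z_{0}}Z)$, and the induced map $X\to\Hom(T_{y_{0}}Y,T_{z_{0}}Z)$, $x\mapsto d_{y_{0}}\Phi_{x}$, is the desired morphism. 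Intrinsically, the same argument reads: the hypothesis means $\Phi^{*}(\mm_{z_{0}})\subset\OOO(X)\otimes\mm_{y_{0}}$, hence $\Phi^{*}(\mm_{z_{0}}^{2})\subset\OOO(X)\otimes\mm_{y_{0}}^{2}$, so $\Phi^{*}$ induces a $\CC$-linear map $\mm_{z_{0}}/\mm_{z_{0}}^{2}\to\OOO(X)\otimes(\mm_{y_{0}}/\mm_{y_{0}}^{2})$, which, both cotangent spaces being finite dimensional, is the same datum as an element of $\OOO(X)\otimes\Hom(T_{y_{0}}Y,T_{z_{0}}Z)$, i.e. a morphism, and evaluating at $x$ recovers $d_{y_{0}}\Phi_{x}$. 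I expect no serious obstacle; the only point needing a little care is the passage to the possibly singular subvarieties $Y$ and $Z$, i.e. checking that $d_{y_{0}}\Phi_{x}$ really is the restriction of the ambient Jacobian, which is the standard compatibility of differentials with composition together with the injectivity of tangent maps for closed embeddings.
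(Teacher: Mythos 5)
Your proof is correct and follows essentially the same route as the paper: reduce to the ambient affine (vector) spaces, observe that the entries of the Jacobian of $\Phi_x$ at $y_0$ are regular functions of $x$, and conclude. The only difference is that you spell out what the paper compresses into ``we can assume $Y,Z$ are vector spaces'' (independence of the lift, functoriality landing in $T_{z_0}Z$) and add an equivalent intrinsic reformulation via $\Phi^*(\mm_{z_0})\subset\OOO(X)\otimes\mm_{y_0}$, which is fine.
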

\begin{proof}
We can assume that $Y,Z$ are vector spaces, $Y = W$ and $Z=V$. Choose bases  $(w_{1},\ldots,w_{m})$ and $(v_{1},\ldots, v_{n})$. Then $\Phi$ is given by an element  of the form
$$
\sum_{i=1}^{n}\sum_{j} f_{ij} \otimes h_{ij} \otimes v_{i}, \text{ where } f_{ij}\in\OOO(X) \text{ and }  h_{ij}\in\OOO(Y)=\CC[y_{1},\ldots,y_{m}],
$$
and so the differential $(d\Phi_{x})_{y_{0}} \colon W \to V$ is given by the matrix 
$$
\left(\sum_{j}f_{ij}(x) \frac{\partial h_{ij}}{\partial y_{k}}(y_{0})\right)_{(i,k)}
$$
whose entries are regular functions on $x$. The claim follows.
\end{proof}
\begin{rem}\lab{Ad-is-bij.rem}
In \cite{KrRe2014Automorphisms-of-t} we show that the canonical homomorphisms
\[
\Ad\colon\Aut(\An) \to \AutL(\Lie \Aut(\An)) \text{ and }  \Ad\colon\Aut(\An) \to \AutL(\Lie \SAut(\An))
\]
are both bijective. This can be improved.
\end{rem}

\begin{prop}\lab{Ad-is-iso.prop}
\hspace{1em}
\be
\item
The adjoint representation 
$
\Ad\colon \Aut(\An) \to \AutL(\Lie\Aut(\An))
$
is an isomorphism of ind-groups.
\item
The induced map $\rho\colon \AutL(\Lie\Aut(\An)) \to \AutL(\Lie\SAut(\An))$ is an isomorphism of ind-groups.
\ee
\end{prop}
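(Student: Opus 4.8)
The plan is to upgrade the \emph{bijectivity} of $\Ad$ recorded in Remark~\ref{Ad-is-bij.rem} (where necessarily $n\geq 2$) to an \emph{isomorphism}, the extra ingredient being that $\Ad$ is a local isomorphism at the identity. Write $\GGG:=\Aut(\An)$ and $\HHH:=\AutL(\Lie\GGG)$, and let $\Ad'\colon\GGG\to\AutL(\Lie\SAut(\An))$ be the second homomorphism of Remark~\ref{Ad-is-bij.rem}; both $\Ad$ and $\Ad'$ are bijective. Since $\Lie\Aut(\An)/\Lie\SAut(\An)\cong\CC$ via the divergence while $\Lie\SAut(\An)$ is simple (Remark~\ref{simple.rem}), we have $\Lie\SAut(\An)=[\Lie\Aut(\An),\Lie\Aut(\An)]$, a characteristic ideal; hence every $\alpha\in\HHH$ restricts to an automorphism of $\Lie\SAut(\An)$, which defines a homomorphism of ind-groups $\rho\colon\HHH\to\AutL(\Lie\SAut(\An))$ with $\rho\circ\Ad=\Ad'$.

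Assume for the moment that $d\Ad_e=\ad$ and $d\Ad'_e$ are isomorphisms of Lie algebras. Then, by translation — using $\Ad\circ\lambda_g=\lambda_{\Ad g}\circ\Ad$, with $\lambda_g$ denoting left multiplication, together with the chain rule — $d\Ad_h$ is an isomorphism for every $h\in\GGG$. Passing to the defining filtrations, $\Ad$ maps each $\GGG_k$ isomorphically onto a locally closed subvariety $W_k\subseteq\HHH$: an injective morphism of varieties all of whose differentials are injective is, over a field of characteristic zero, an isomorphism onto a locally closed subvariety (a standard consequence of Chevalley's theorem and generic smoothness). The $W_k$ increase and, since $\Ad$ is surjective, cover $\HHH$; a Noetherian argument then shows that for each $m$ we have $W_k\cap\HHH_m=\HHH_m$ for $k\gg0$, so that $\Ad^{-1}$ restricted to $\HHH_m$ is a morphism into $\GGG$. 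Hence $\Ad^{-1}$ is a morphism of ind-varieties and $\Ad$ is an isomorphism, which is~(1); the same argument gives that $\Ad'$ is an isomorphism, and then $\rho=\Ad'\circ\Ad^{-1}$ is an isomorphism, which is~(2).

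It remains to verify the claim about the differentials, and this is where the real work lies. Both $d\Ad_e=\ad$ and $d\Ad'_e\colon\f\mapsto\ad(\f)|_{\Lie\SAut(\An)}$ are injective: their kernels lie in the centralizer of $\Lie\SAut(\An)$ in $\Lie\Aut(\An)$, which is trivial by a direct computation in the realization $\Lie\Aut(\An)=\VFc(\An)$ of Remark~\ref{LieAutAn.rem} (an element commuting with all $\dxi$ has constant coefficients, and such an element commuting with all $x_i\dxk$, $i\neq k$, vanishes). Surjectivity rests on the fact that every derivation of the simple Lie algebra $\Lie\SAut(\An)$ is inner, induced by a field of constant divergence — that is, $\Der(\Lie\SAut(\An))=\{\ad(\f)|_{\Lie\SAut(\An)}\mid\f\in\VFc(\An)\}$ — which is part of the analysis of \cite{KrRe2014Automorphisms-of-t} underlying Remark~\ref{Ad-is-bij.rem}. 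This gives surjectivity of $d\Ad'_e$ directly, and also of $\ad$: for $D\in\Der(\Lie\Aut(\An))$ one has $D(\Lie\SAut(\An))\subseteq\Lie\SAut(\An)$, so $D|_{\Lie\SAut(\An)}=\ad(\f)|_{\Lie\SAut(\An)}$ for some $\f$; then $D-\ad(\f)$ kills $\Lie\SAut(\An)$ and therefore maps $\Lie\Aut(\An)$ into the trivial centralizer of $\Lie\SAut(\An)$, whence $D=\ad(\f)$. The main obstacle is thus this surjectivity — that the derivation algebra of $\Lie\SAut(\An)$ is no larger than $\VFc(\An)$ — which is exactly what distinguishes the present situation from Proposition~\ref{FK.prop}, where the comparison map is bijective but not an isomorphism precisely because it is not surjective on Lie algebras. (Alternatively, surjectivity of the differentials can be extracted from the surjectivity of $\Ad$ and $\Ad'$ by generic smoothness and homogeneity, if one is willing to assume the base field uncountable.)
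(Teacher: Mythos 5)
The reduction to differentials is where your argument breaks down. The pivotal step --- ``an injective morphism of varieties all of whose differentials are injective is, over a field of characteristic zero, an isomorphism onto a locally closed subvariety'' --- is false without extra hypotheses such as properness of the morphism or normality of the image. Counterexample: let $C\subset\AA^{2}$ be the nodal cubic $y^{2}=x^{2}(x+1)$ and $\nu\colon\AA^{1}\to C$, $t\mapsto(t^{2}-1,\,t^{3}-t)$, its normalization. The restriction of $\nu$ to $\AA^{1}\setminus\{1\}$ is injective, its differential is injective at every point (also as a map into the $2$-dimensional tangent space of $C$ at the node), and its image is all of $C$, hence locally closed; yet it is not an isomorphism onto $C$, since $C$ is singular at the node. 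In your setting you would have to apply such a statement to $\Ad\colon\GGG_{k}\to\HHH_{m}$, where nothing is known (nor used in the paper) about smoothness or normality of the filtration pieces $\GGG_{k}$, $\HHH_{m}$ or of the images $W_{k}$; moreover, on the pieces the differentials $T_{g}\GGG_{k}\to T_{\Ad g}\HHH_{m}$ are only injective, not bijective --- bijectivity holds only for the limit tangent spaces --- so you also cannot fall back on ``bijective morphism with bijective differentials between smooth varieties is an isomorphism.'' There is no inverse-function-type theorem for ind-groups converting ``bijective homomorphism with bijective differential at $e$'' into ``isomorphism'' (Proposition~\ref{FK.prop} is exactly a warning that bijectivity alone is not enough, and no such general converse is established anywhere in the paper), so this step is a genuine gap; it also undermines your proof of (2), which rests on the same argument applied to $\Ad'$. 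Your Lie-algebra computations (triviality of the centralizer of $\Lie\SAut(\An)$, and that all derivations of $\Lie\SAut(\An)$ come from $\VFc(\An)$) are plausible and consistent with \cite{KrRe2014Automorphisms-of-t}, but they only yield that $d\Ad_{e}$ is bijective, which is not sufficient.

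For comparison, the paper avoids any such general principle by constructing the inverse of $\Ad$ explicitly: by a formula from \cite{KrRe2014Automorphisms-of-t}, for $\theta=\Ad(\f^{-1})$ the matrix $\bigl(\theta(\dd{k})x_{j}\bigr)_{(j,k)}^{-1}$ equals $\Jac(\f)$; since its rows are gradients, one recovers $\f$ up to a translation by a morphism $\psi$ of ind-varieties, and after correcting by the translation subgroup one obtains a morphism $\tilde\psi$ with $\Ad\circ\tilde\psi=\id$ and $\tilde\psi\circ\Ad=\id$, whence $\Ad$ is an isomorphism. Part (2) is then deduced from Theorem~\ref{thm3}(1) (whose proof only uses part (1)), not by proving directly that $\Ad'$ is an isomorphism. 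If you want to keep your structure, the missing content is precisely a proof that $\Ad^{-1}$ is a morphism of ind-varieties, i.e., essentially the paper's explicit construction.
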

\begin{proof}
We will use here the identification of $\Lie\Aut(\An)$ with $\VFc(\An)$, see Remark~\ref{LieAutAn.rem}. Put $\dd{i}:=\dxi$.

(1)
Let $\f=(f_{1},\ldots,f_{n}) \in \Aut(\An)$ and set $\theta:=\Ad(\f^{-1})\in\AutL(\VFc(\An))$. Then the matrix $\Bigl(\theta(\dd{k})x_{j}\Bigr)_{(j,k)}$ is invertible, and 
\[\tag{$*$}
\Bigl(\theta(\dd{k})x_{j}\Bigr)_{(j,k)}^{-1} = \Jac(\f)=\Bigl(\frac{\partial f_{j}}{\partial x_{i}}\Bigr)_{(i,j)},
\] 
see \cite[Remark~4.2]{KrRe2014Automorphisms-of-t}.
We now claim that  the map 
$$
\theta\mapsto \Bigl(\theta(\dd{k})x_{j}\Bigr)_{(j,k)}^{-1}: \AutL(\VFc(\An)) \to \M_{n}(\Cxn)
$$ 
is a well-defined morphism of ind-varieties. In fact, $\theta\mapsto \theta(\dd{k})x_{j}$ is the composition of the orbit map $\theta\mapsto \theta(\dd{k})
\colon \AutL(\VFc(\An)) \to \VFc(\An)$ and the evaluation map $\delta\mapsto \delta(x_{j})\colon \VFc(\An) \to \Cxn$, hence $\theta\mapsto \Theta:=\Bigl(\theta(\dd{k})x_{j}\Bigr)_{(j,k)}$ is a morphism. Since $\jac(\Theta)\in\Cst$ the claim follows. 

Now recall that the gradient 
$\Cxn \to \Cxn^{n}, \ f\mapsto (\dfx{f}{x_{1}},\ldots,\dfx{f}{x_{n}})$,  defines an isomorphism
$$
\gamma\colon \Cxn_{\geq 1} \simto \Gamma:=\{(h_{1},\ldots,h_{n})\mid \dfx{h_{i}}{x_{j}} = \dfx{h_{j}}{x_{i}} \text{ for all } i<j\}.
$$
It follows from $(*)$ that the rows of the matrix $(h_{ij})_{(i,j)} := \Bigl(\theta(\dd{k})x_{j}\Bigr)_{(j,k)}^{-1}$ belong to $\Gamma$, so that we  get a morphism 
$$
\psi\colon \AutL(\VFc(\An)) \to \Cxn^{n}, \  \theta\mapsto (f_{1},\ldots,f_{n}),
$$
where $f_{i}:=\gamma^{-1}(h_{i1},\ldots,h_{in})\in\Cxn_{\geq 1}$. By construction, we have 
\[\tag{$**$}
\psi(\theta) = \psi(\Ad(\f^{-1})) = \f_{0}:=(f_{1}-f_{1}(0),\ldots,f_{n}-f_{n}(0)) = \t_{-\f(0)} \circ \f
\]
where $\t_{a}$ is the translation $v \mapsto v+a$.
Let $S \subset \Aff_{n}$ be the subgroup of translations, and set $\tS := \Ad(S)$. Then $\tS \subset \AutL(\VFc(\An))$ is a closed algebraic subgroup and $\Ad\colon S \to \tS$ is an isomorphism. 
It follows from $(**)$ that $\Ad(\psi(\theta))\cdot \theta = \Ad(\t_{-\f(0)}) \in \tS$, and so 
$$
\tilde\psi(\theta):= \psi(\theta)^{-1}\cdot (\Ad|_{S})^{-1}(\Ad(\psi(\theta))\cdot \theta)
$$ 
is a well-defined morphism $\tilde\psi \colon \AutL(\VFc(\An)) \to \Aut(\An)$ with the property that 
$$
\Ad(\tilde\psi(\theta)) = \Ad(\psi(\theta)^{-1}) \cdot \Ad(\psi(\theta)) \cdot \theta = \theta.
$$ 
Thus $\Ad\colon \Aut(\An) \to \AutL(\Lie\Aut(\An))$ is an isomorphism, with inverse $\tilde\psi$.
\par\smallskip
(2)
Clearly, the restriction $\rho\colon \AutL(\Lie\Aut(\An)) \to \AutL(\Lie\SAut(\An))$ is a homomorphism of ind-groups, and it is bijective (Remark~\reff{Ad-is-bij.rem}). It follows from (1) that the composition $\rho\circ\Ad\colon \Aut(\An)\to \AutL(\Lie\SAut(\An))$ is a bijective homomorphism of ind-groups. Now we use Theorem~\reff{thm3}(1) to conclude that $\rho\circ\Ad$ is an isomorphism, hence $\rho$ is an isomorphism, too. Note that in the proof of Theorem~\ref{thm3}(1) below we will only use Proposition~\ref{Ad-is-iso.prop}(1).
\end{proof}

\begin{proof}[Proof of Theorem~\ref{thm3}]
(1)
Let $\phi\colon \Aut(\An) \to \GGG$ be an homomorphism of ind-groups such that $d\phi$ is injective. We can assume that  $\GGG = \overline{\phi(\Aut(\An))}$, and we will show that $\phi$ is an isomorphism. The basic idea is to construct a homomorphism $\psi\colon \GGG \to \Aut(\An)$ such that $\psi\circ\phi = \id$. By Proposition~\reff{retraction.prop} below this implies that $\phi$ is a closed immersion, hence an isomorphism.

Denote by $L \subset \Lie\GGG$ the image of $d\phi$.
For any $g \in \Aut(\An)$ we have $d\phi\circ\Ad(g) = \Ad(\phi(g)) \circ d\phi$. In particular, $L$ is stable under $\Ad(\phi(g))$, hence stable under $\Ad(\GGG)$, because $\phi(\Aut(\An))$ is dense in $\GGG$. Thus we get the following commutative diagram of homomorphisms of ind-groups
$$
\begin{CD}
\Aut(\An) @>\phi>> \GGG \\
@V{\Ad_{\Aut(\An)}}V{\simeq}V  @VV{\Ad_{\GGG}}V \\
\AutL(\Lie\Aut(\An))) @>\simeq>> \AutL(L)
\end{CD}
$$
where the first vertical map is an isomorphism by Proposition~\reff{Ad-is-iso.prop}(1). Thus, the composition $\Ad_{\GGG}\circ\phi\colon \Aut(\An) \to \AutL(L)\simeq\Aut(\An)$ is an isomorphism, and so $\phi$ is also an isomorphism, by Proposition~\reff{retraction.prop} below.

If $d\phi$ is not injective, then $\ker d\phi\supset \Lie\SAut(\An)$ (Remark~\reff{simple.rem}) and so $d\phi = f\circ d\jac$ where $f\colon \CC \to \Lie\GGG$ is a Lie algebra homomorphism. If $\Cst \subset \GL_{n}(\CC)$ denotes the center, then $\phi|_{\Cst}\colon\Cst \to \GGG$ factor through $?^{n}\colon \Cst \to\Cst$, because $\SL_{n}(\CC)\subset\ker\phi$, i.e. $\phi(z)=\rho(z^{n})$ for any $z\in\Cst$ and a suitable homomorphism $\rho\colon\Cst \to \GGG$ of ind-groups. By construction, $d\rho_{e} = f\colon \CC \to \Lie\GGG$, and so the two homomorphisms $\phi$ and $\rho\circ\jac$ have the same differential. Thus, by Proposition~\reff{dphi.prop}, we get $\phi=\rho\circ\jac$, and we are done.
\par\smallskip
(2)
Let $\phi\colon\SAut(\An) \to \GGG$ be a homomorphism of ind-groups. 
If $d\phi_{e}$ is not injective, then $d\phi_{e}$ is the trivial map (Remark~\reff{simple.rem}), hence $d\phi_{e} = d\bar\phi_{e}$ where $\bar\phi\colon \g \mapsto e$ is the constant homomorphism. Again by Proposition~\reff{dphi.prop} we get $\phi =\bar\phi$. 

If $d\phi_{e}$ is injective, set $L:=d\phi_{e}(\Lie\SAut) \subset \Lie\GGG$. Again we can assume that $\GGG = \overline{\phi(\SAut(\An))}$. Since $L$ is stable under $\Ad\phi(\g)$ for all $\g \in \Aut(\An)$ it is also stable under $\GGG$, and  we get, as above, the following commutative diagram
$$
\begin{CD}
\Aut(\An) @<\supseteq<< \SAut(\An) @>\phi>> \GGG \\
@V{\Ad_{\Aut(\An)}}V{\simeq}V @V{\Ad_{\SAut(\An)}}V{\subset}V  @VV{\Ad_{\GGG}}V \\
\AutL(\Lie\Aut(\An))) @>\Psi>\text{bij}> \AutL(\Lie\SAut(\An))) @>\Phi>\simeq> \AutL(L)
\end{CD}
$$
where $\Ad_{\Aut(\An)}$ is an isomorphism and $\Psi$ is a bijective homomorphism (see Proposition~\reff{Ad-is-iso.prop} and Remark~\reff{Ad-is-bij.rem}).
The image $\A \subset \AutL(\Lie \SAut(\An))$ of $\SAut(\An)$ is a closed subgroup isomorphic to $\SAut(\An)$, and  $\A\simto\Phi(\A) =\Ad_{\GGG}(\phi(\SAut(\An))$. But  $\phi(\SAut(\An)) \subset \GGG$ is dense, and so $\Ad_{\GGG}(\GGG) = \Phi(\A)$. Thus, the composition $\Ad_{\GGG} \circ\phi\colon \SAut(\An)\to \Phi(\A)$ is an isomorphism, and so $\phi$ is an isomorphism, by Proposition~\reff{retraction.prop} below.
\end{proof}

\begin{prop}\lab{retraction.prop}
Let $\HHH,\GGG$ be two ind-groups, and let $\phi\colon \HHH \to \GGG$, $\psi\colon \GGG \to \HHH$ be two homomorphisms. If $\psi\circ\phi = \id_{\HHH}$, then $\phi$ is a closed immersion, i.e. $\phi(\HHH) \subset \GGG$ is a closed subgroup and $\phi$ induces an isomorphism $\HHH \simto \phi(\HHH)$.
\end{prop}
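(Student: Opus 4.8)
The plan is to realize $\phi(\HHH)$ as the locus where two morphisms of ind-varieties agree, and then to check that $\psi$ supplies an inverse of $\phi$ on this locus. The point of departure is the set-theoretic identity
$$\phi(\HHH) = \{\, g \in \GGG \mid (\phi\circ\psi)(g) = g \,\}.$$
Indeed, if $g = \phi(h)$ then $(\phi\circ\psi)(g) = \phi\bigl(\psi(\phi(h))\bigr) = \phi(h) = g$ since $\psi\circ\phi = \id_{\HHH}$, and conversely any $g$ fixed by $\phi\circ\psi$ manifestly lies in $\phi(\HHH)$. In particular $\phi$ is injective, and $\phi(\HHH)$ is an abstract subgroup of $\GGG$ because $\phi$ is a homomorphism.

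First I would show that $\phi(\HHH)$ is closed. Write $\GGG = \bigcup_{k}\GGG_{k}$. For each $k$ the morphism $\phi\circ\psi$ maps $\GGG_{k}$ into some $\GGG_{m}$ with $m \geq k$, so on $\GGG_{k}$ one is comparing two morphisms of varieties $\GGG_{k}\to\GGG_{m}$ --- namely $(\phi\circ\psi)|_{\GGG_{k}}$ and the inclusion $\GGG_{k}\into\GGG_{m}$ --- and the locus where they agree is closed because $\GGG_{m}$ is separated. Hence $\phi(\HHH)\cap\GGG_{k}$ is closed in $\GGG_{k}$ for every $k$, so $\phi(\HHH)$ is a closed ind-subgroup of $\GGG$ with the induced filtration $\{\phi(\HHH)\cap\GGG_{k}\}_{k}$. (Equivalently, $\phi(\HHH)$ is the preimage of the diagonal under the morphism $g \mapsto \bigl(g, (\phi\circ\psi)(g)\bigr)$, $\GGG \to \GGG\times\GGG$.)

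Next I would verify that $\phi\colon\HHH \to \phi(\HHH)$ is an isomorphism of ind-varieties by producing its inverse. On one side, $\phi$ factors through a morphism of ind-varieties $\HHH\to\phi(\HHH)$: for each $j$ there is a $k$ with $\phi(\HHH_{j})\subset\GGG_{k}$, whence $\phi(\HHH_{j})\subset\phi(\HHH)\cap\GGG_{k}$ and $\HHH_{j}\to\phi(\HHH)\cap\GGG_{k}$ is a morphism. On the other side, $\psi$ restricts to a morphism of ind-varieties $\psi|_{\phi(\HHH)}\colon\phi(\HHH)\to\HHH$: for each $k$ one has $\psi(\GGG_{k})\subset\HHH_{\ell}$ for some $\ell$ with $\GGG_{k}\to\HHH_{\ell}$ a morphism, and since $\phi(\HHH)\cap\GGG_{k}$ is a closed subvariety of $\GGG_{k}$ the restriction $\phi(\HHH)\cap\GGG_{k}\to\HHH_{\ell}$ is again a morphism. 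The identities $\psi\circ\phi = \id_{\HHH}$ and $\phi\circ\bigl(\psi|_{\phi(\HHH)}\bigr) = \id_{\phi(\HHH)}$ --- the latter being exactly the displayed identity above --- show that $\phi$ and $\psi|_{\phi(\HHH)}$ are mutually inverse morphisms. Hence $\phi$ is an isomorphism of $\HHH$ onto the closed subgroup $\phi(\HHH)\subset\GGG$, i.e.\ a closed immersion.

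I do not expect a real obstacle here: the whole argument rests on two elementary facts --- that the equalizer of two morphisms into a separated variety is closed, and that a morphism of varieties restricts to any closed subvariety. The only point needing mild attention is the bookkeeping of the indices $j,k,m,\ell$, to make sure that the assertions ``closed on each $\GGG_{k}$'' and ``a morphism on each piece'' genuinely assemble into statements about ind-varieties and ind-groups in the sense of Section~\ref{notation.sec}; this is routine.
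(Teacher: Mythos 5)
Your proof is correct, but it follows a genuinely different and in fact more economical route than the paper. You exploit that $\phi\circ\psi$ is an idempotent endomorphism of $\GGG$ whose fixed locus is exactly $\phi(\HHH)$, so that $\phi(\HHH)\cap\GGG_{k}$ is the equalizer of two morphisms $\GGG_{k}\to\GGG_{m}$ and hence closed (separatedness of the pieces $\GGG_{m}$, the standard convention for varieties, is the only hypothesis you use here --- the same hypothesis is implicit in the paper's argument), and then $\psi|_{\phi(\HHH)}$ is an explicit inverse morphism, so $\phi\colon\HHH\to\phi(\HHH)$ is an isomorphism of ind-groups. The paper instead shows first that each $\phi(\HHH_{i})$ is closed in $\GGG_{i}$ (because $\psi\circ\phi$ restricted to $\HHH_{i}$ is a closed embedding), and then deduces closedness of $\phi(\HHH)$ itself from Lemma~\ref{closedsubgroup.lem}, applied with $S=\ker\psi$ and the bijective multiplication map $\ker\psi\times\phi(\HHH)\to\GGG$; that lemma rests on a constructibility/countability argument and forces a base change to an uncountable field. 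Your equalizer argument bypasses both the uncountability assumption and the auxiliary lemma, and it directly produces the inverse morphism on the image, which the paper's proof only obtains implicitly; what the paper's decomposition $\GGG=S\cdot H$ buys is a reusable criterion (Lemma~\ref{closedsubgroup.lem}) for closedness of subgroups given by unions of algebraic subsets, which has independent interest beyond this proposition. The only points to make explicit in a polished write-up are the ones you already flag: choose $m\geq k$ so the inclusion $\GGG_{k}\into\GGG_{m}$ is available, and note that a morphism of varieties whose image lies in a closed subvariety factors through it, so that both $\phi\colon\HHH\to\phi(\HHH)$ and $\psi|_{\phi(\HHH)}$ are morphisms for the induced filtration $\{\phi(\HHH)\cap\GGG_{k}\}_{k}$.
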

\begin{proof} By base change we can assume that the base field $\CC$ is uncountable.
Let $\HHH = \bigcup_{i}\HHH_{i}$ and $\GGG=\bigcup_{j}\GGG_{j}$ where we can assume that $\HHH_{i}\subset \GGG_{i}$ for all $i$. Moreover, for every $i$ there is a $k = k(i)$ such that $\psi(\GGG_{i}) \subset \HHH_{k}$. By assumption, the composition $\psi\circ\phi\colon \HHH_{i}\to \GGG_{i} \to \HHH_{k}$ is the closed embedding $\HHH_{i} \into \HHH_{k}$, hence the first map is a closed embedding. Thus $H_{i}:=\phi(\HHH_{i})$ is a closed subset of $\GGG_{i}$ and $H:=\phi(\HHH) = \bigcup_{i} H_{i}$. Now the claim follows from Lemma~\reff{closedsubgroup.lem} below by setting $S : =\ker\psi$.
\end{proof}

Recall that a subset $S \subset \V$ of an ind-variety $\V$ is called {\it ind-constructible\/} if $S = \bigcup_{i}S_{i}$ where $S_{i}\subset S_{i+1}$ are constructible subsets of $\V$.

\begin{lem}\lab{closedsubgroup.lem}
Let $\GGG$ be an ind-group, $H \subset \GGG$ a subgroup and $S \subset \GGG$ an ind-constructible subset. Assume that $\CC$ is uncountable and that
\be
\item $H = \bigcup_{i} H_{i}$ where $H_{i}\subset H_{i+1}\subset \GGG$ are closed algebraic subsets,
\item the multiplication map $S \times H \to \GGG$ is bijective. 
\ee
Then $H$ is a closed subgroup of $\GGG$.
\end{lem}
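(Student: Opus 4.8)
The goal is to show that $H$ is closed in $\GGG$; since $H$ is a subgroup by hypothesis, it is then a closed subgroup. Writing $\GGG=\bigcup_k\GGG_k$, it suffices to prove that $H\cap\GGG_k$ is closed in $\GGG_k$ for each $k$. I would start with harmless reductions. The field $\CC$ is uncountable by assumption. Using (2), let $(s_0,h_0)\in S\times H$ be the unique pair with $s_0h_0=e$; then $s_0=h_0^{-1}\in S\cap H$, and replacing $S$ by the left translate $s_0^{-1}S$ --- which is again ind-constructible and for which (2) still holds --- we may assume $e\in S$. After refining the filtration of $\GGG$ we may also assume $H_i\subseteq\GGG_i$ and $S_j\subseteq\GGG_j$ for all $i,j$, each $H_i$ being closed and each $S_j$ constructible in $\GGG$. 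With these normalisations it is enough to produce, for each fixed $k$, an index $i_0$ with $H\cap\GGG_k\subseteq H_{i_0}$: indeed then $H\cap\GGG_k=H_{i_0}\cap\GGG_k$ is closed in $\GGG_k$.

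The one place where the bijectivity hypothesis (2) enters is the following inclusion: for all $i,j$ one has $H\cap\mu(S_j\times H_i)\subseteq H_i$, where $\mu\colon\GGG\times\GGG\to\GGG$ denotes multiplication. Indeed, if $g=sh$ with $s\in S_j$, $h\in H_i$ and also $g\in H$, then $g=e\cdot g$ is a second $S\times H$-decomposition of $g$, so the uniqueness in (2) forces $s=e$ and $g=h\in H_i$. On the other hand, for any algebraic subset $V\subseteq\GGG$ the sets $C^V_{i,j}:=\mu(S_j\times H_i)\cap V$ are constructible in $V$ (by Chevalley's theorem, since $S_j\times H_i$ is constructible in a suitable product of members of the filtration and $\mu$ restricts to a morphism of varieties), they increase in $i$ and in $j$ separately, and they cover $V$, because $\bigcup_{i,j}\mu(S_j\times H_i)=\GGG$.

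The core of the argument is a Baire-category peeling applied to $V=\GGG_k$. Over an uncountable field an irreducible variety is not a countable union of proper closed subvarieties; hence a variety $V$ with finitely many irreducible components that is a doubly increasing countable union $\bigcup_{i,j}C^V_{i,j}$ has some $C^V_{i_1,j_1}$ dense in $V$ (argue on each component and take a coordinatewise maximum of indices), and therefore contains a dense open subset $U_1\subseteq V$. By the inclusion above, $H\cap U_1=H_{i_1}\cap U_1$, which is closed in $U_1$ (as $H_{i_1}$ is closed in $\GGG$) and hence locally closed in $V$. Passing to the closed subvariety $V_1:=V\setminus U_1$, which has strictly smaller dimension, and repeating with the restricted family $C^{V_1}_{i,j}$, one reaches the empty set after finitely many steps and obtains a partition $\GGG_k=U_1\sqcup U_2\sqcup\cdots\sqcup U_r$ into locally closed pieces together with indices $i_1,\dots,i_r$ such that $H\cap U_l=H_{i_l}\cap U_l\subseteq H_{i_l}$ for each $l$. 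Taking $i_0:=\max_l i_l$ gives $H\cap\GGG_k=\bigsqcup_l(H_{i_l}\cap U_l)\subseteq H_{i_0}$, so $H\cap\GGG_k=H_{i_0}\cap\GGG_k$ is closed in $\GGG_k$. Since $k$ is arbitrary, $H$ is closed in $\GGG$.

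I expect the main obstacle to be organising the peeling induction cleanly --- in particular, verifying that a \emph{constructible} dense subset of a possibly reducible $V$ contains an honest dense \emph{open} subset, and checking that the initial reductions (reindexing the filtrations, translating $S$) preserve all hypotheses. The genuinely essential point, however, is the short inclusion $H\cap\mu(S_j\times H_i)\subseteq H_i$ distilled from hypothesis (2); granting it, what remains is a routine dimension induction exploiting that $\CC$ is uncountable.
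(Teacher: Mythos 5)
Your proof is correct, and its skeleton is the same as the paper's: after translating $S$ so that $e\in S$, the bijectivity hypothesis is used exactly once, in the form you state as $H\cap S_jH_i\subseteq H_i$ (the paper phrases it as $\dot S H\cap H=\emptyset$ with $\dot S:=S\setminus\{e\}$), and the rest is an uncountability argument about covering $\GGG_k$ by the constructible sets $S_jH_i$. The one genuine difference is how that covering step is handled. The paper cites \cite[Lemma~3.6.4]{FuKr2014On-the-geometry-of}, which says that over an uncountable field a variety covered by a countable increasing family of constructible sets is equal to one member, and so obtains a single index $i$ with $\GGG_k\subseteq S_iH_i$; the conclusion $H\cap\GGG_k=H_i\cap\GGG_k$ is then immediate. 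You instead reprove (a stratified form of) that fact inline: some $S_{j_1}H_{i_1}$ is dense in $\GGG_k$, hence contains a dense open piece, and Noetherian/dimension induction on the complement yields finitely many locally closed strata with indices whose maximum does the job. This buys self-containedness --- you do not rely on the external lemma from the in-preparation reference --- at the cost of a longer argument and of having to verify the small points you flag (dense constructible contains dense open, behaviour on reducible $\GGG_k$, termination), all of which you handle correctly; the paper's route is shorter and produces a single index rather than a maximum over a stratification.
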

\begin{proof}
Let $\GGG = \bigcup_{k}\GGG_{k}$. We have to show that for every $k$ there exists an $i=i(k)$ such that $H\cap \GGG_{k}= H_{i} \cap \GGG_{k}$. We can assume that  $e\in S = \bigcup_{i} S_{i}$. Then, by assumption, $\GGG = \bigcup_{j} S_{j} H_{j}$. Since $S_{j}H_{j}\cap \GGG_{k}$ is a constructible subset of $\GGG_{k}$ it follows that there exists a $j=j(k)$ such that $\GGG_{k}\subset S_{j} H_{j}$ (\cite[Lemma~3.6.4]{FuKr2014On-the-geometry-of}).
Setting $\dot S := S \setminus\{e\}$ we get $\dot S H \cap H = \emptyset$.
Thus, $\GGG_{k} = (\dot S_{i}H_{i} \cap \GGG_{k}) \cup (H_{i} \cap \GGG_{k})$ and $H\cap \dot S_{i} H_{i}=\emptyset$, hence
$H \cap \GGG_{k} = H_{i}\cap \GGG_{k}$.
\end{proof}

Finally, we can prove Theorem~\reff{thm5}.

\begin{proof}[Proof of Theorem~\ref{thm5}]
(1) We already know from Theorem~\reff{thm3} that an injective homomorphism $\phi\colon \Aut(\An) \to \Aut(\An)$ is a closed immersion. We claim that $d\phi_{e}\colon \Lie\Aut(\An) \to \Lie\Aut(\An)$ is an isomorphism. To show this, consider the linear action of $\GL_{n}(\CC)$ on $\Lie\Aut(\An)$. We then have 
$$
\Lie\Aut(\An) \subset \VF(\An) \simeq \Cn\otimes\CC[x_{1},\ldots,x_{n}]=\bigoplus_{d}\Cn\otimes\CC[x_{1},\ldots,x_{n}]_{d}
$$ 
and the latter is multiplicity-free as a $\GL_{n}(\CC)$-module as well as an $\SL_{n}(\CC)$-module.  

Now $\phi(\GL_{n}(\CC)) \subset \Aut(\An)$ is a closed subgroup isomorphic to $\GL_{n}(\CC)$. 
Moreover, $d\phi_{e}\colon \Lie\Aut(\An) \to \Lie\Aut(\An)$ is an injective linear map which is equivariant with respect to $\phi\colon \GL_{n}(\CC)\simto \phi(\GL_{n}(\CC))$. Since $\phi(\GL_{n}(\CC))$ is  conjugate to the standard $\GL_{n}(\CC)\subset\Aut(\An)$ and since  the representation of $\GL_{n}(\CC)$ on $\Lie\Aut(\An)$ is multiplicity-free, it follows that $d\phi_{e}$ is an isomorphism. Thus $\GGG:=\phi(\Aut(\An))\subset \Aut(\An)$ is a closed subgroup with the same Lie algebra as $\Aut(\An)$, and we get the following commutative diagram (see proof of Theorem~\reff{thm3}):
$$
\begin{CD}
\Aut(\An) @>\phi>> \GGG  @>\subseteq>> \Aut(\An)\\
@V{\Ad_{\Aut(\An)}}V{\simeq}V  @VV{\Ad_{\GGG}}V @VV{\Ad_{\Aut(\An)}}V\\
\AutL(\Lie\Aut(\An))) @>\simeq>> \AutL(\Lie\GGG) @= \AutL(\Lie\Aut(\An))\\
\end{CD}
$$
As a consequence, all maps are isomorphisms, and so $\GGG = \Aut(\An)$ and $\phi$ is an isomorphism. 

It remains to see that every automorphism $\phi\in\Aut(\An)$ is inner. Since $d\phi_{e}\in\AutL(\Lie\Aut(\An))$ we get $d\phi_{e} = \Ad(\g)$ for some $\g \in \Aut(\An)$, by Remark~\reff{Ad-is-bij.rem}. This means that $d\phi_{e}=(d\Int \g)_{e}$ and so $\phi = \Int\g$, by Proposition~\reff{dphi.prop}.
\par\smallskip
(2) 
The same argument as above shows that every nontrivial homomorphism $\SAut(\An) \to \SAut(\An)$ is an isomorphism where we use the fact that the action of $\SL_{n}(\CC)$ on $\Lie\SAut(\An)$ is multiplicity-free. 

Moreover, the homomorphism $\Ad\colon\Aut(\An) \to \AutL(\Lie\SAut(\An))$ is a bijective homomorphism of ind-groups (Remark~\reff{Ad-is-bij.rem}). Hence, for every $\phi\in\SAut(\An)$ there is a $\g\in\Aut(\An)$ such that $d\phi_{e}=\Ad\g$ which implies that $\phi=\Int\g$.
\end{proof}

\par\medskip
\section{A special subgroup of $\Aut(X)$}
Our Theorem~\ref{thm1} will follow from a more general result which we will describe now. For any affine variety $X$ consider the normal subgroup $\UU(X)$ of $\Aut(X)$  generated by the unipotent elements, or, equivalently,  by the closed subgroups isomorphic to $\Cplus$. This is an instance of a so-called {\it algebraically generated subgroup\/} of an ind-group, see \cite{KrZa2014Locally-finite-gro}. The group $\UU(X)$ was introduced and studied in \cite{ArFlKa2013Flexible-varieties}  where the authors called it the {\it group of special automorphisms of $X$}. In particular, they proved a very interesting connection between transitivity properties of $\UU(X)$  and the flexibility of $X$.

Let us define the following notion of an ``algebraic'' homomorphism between these groups.
\begin{defn}
A homomorphism $\phi\colon \UU(X) \to \UU(Y)$ is {\it algebraic}, if for any closed subgroup $U \subset \UU(X)$ isomorphic to $\Cplus$ the image $\phi(U) \subset\UU(Y)$ is closed and $\phi|_{U}\colon U \to \phi(U)$ is a homomorphism of algebraic groups. We say that $\UU(X)$ and $\UU(Y)$ are {\it algebraically isomorphic\/}, $\UU(X) \simeq \UU(Y)$,  if there exists a bijective homomorphism $\phi\colon \UU(X) \to \UU(Y)$ such that $\phi$ and $\phi^{-1}$ are both algebraic.
\end{defn}

\begin{lem}
Let $\phi\colon \UU(X) \to \UU(Y)$ be an algebraic homomorphism. Then, for any algebraic subgroup $G \subset \UU(X)$ generated by unipotent elements the image $\phi(G) \subset \UU(Y)$ is closed and $\phi|_{G}\colon G \to \phi(G)$ is a homomorphism of algebraic groups. 
\end{lem}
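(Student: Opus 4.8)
The plan is to reduce the statement to the definition of an algebraic homomorphism by a structure-theoretic argument on $G$. The key input is that an algebraic subgroup $G\subset\UU(X)$ generated by unipotent elements is, in particular, a connected algebraic group generated by its one-parameter unipotent subgroups $U\cong\Cplus$, so it is generated by finitely many of them, say $U_1,\dots,U_r$. Indeed, writing $G$ as an increasing union is not needed here since $G$ is already algebraic; connectedness follows because $G$ is generated by connected subgroups containing $e$, and by a standard dimension/Noetherianity argument finitely many $U_i$ suffice to generate $G$ (the product map $U_{i_1}\times\cdots\times U_{i_m}\to G$ has image a constructible subgroup, and once it has the right dimension it is all of $G$).

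First I would apply the hypothesis to each $U_i$: the image $\phi(U_i)\subset\UU(Y)$ is a closed subgroup isomorphic to $\Cplus$ (or trivial), and $\phi|_{U_i}$ is a morphism of algebraic groups. Next I would consider the product morphism $\mu\colon U_{i_1}\times\cdots\times U_{i_N}\to \UU(Y)$ (for a suitable word realizing all of $G$) given by $(u_1,\dots,u_N)\mapsto \phi(u_1)\cdots\phi(u_N)$; this is a morphism of algebraic varieties into some $\UU(Y)_k$, since each factor map lands in a fixed $\UU(Y)_k$ and multiplication in an ind-group is a morphism. Its image is $\phi(G)$, hence $\phi(G)$ is a constructible subgroup of the ind-group $\UU(Y)$, contained in a single $\UU(Y)_k$; a constructible subgroup of an algebraic group is closed, so $\phi(G)$ is a closed algebraic subgroup of $\UU(Y)$. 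Finally, $\phi|_G\colon G\to\phi(G)$ is an abstract group homomorphism between algebraic groups which is a morphism on a generating set of subgroups and factors the morphism $\mu$ through the multiplication $G\times\cdots\times G\to G$; by the universal property of quotient morphisms of algebraic groups (or simply because $\mu$ descends through the surjective morphism $U_{i_1}\times\cdots\times U_{i_N}\to G$), $\phi|_G$ is itself a morphism, hence a homomorphism of algebraic groups.

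The step I expect to require the most care is showing that $\phi|_G$ is a \emph{morphism}, not merely an abstract homomorphism that is algebraic on each $U_i$. The clean way is: the surjection $p\colon U_{i_1}\times\cdots\times U_{i_N}\onto G$ is a morphism of varieties, $\mu = \phi|_G\circ p$ is a morphism, and $p$ admits local sections in the étale (or even Zariski, after possibly enlarging $N$) topology — or one invokes that a homomorphism of algebraic groups constant on the fibers of a faithfully flat morphism is a morphism. Alternatively, one avoids sections entirely by noting that $\phi|_G$ is an abstract homomorphism of algebraic groups whose graph is constructible (being the image of the constructible set $\{(g,\mu(\tilde g)) : p(\tilde g)=g\}$), and a constructible subgroup of $G\times\phi(G)$ projecting isomorphically to $G$ is the graph of a morphism. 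Either route finishes the proof; the rest is routine.
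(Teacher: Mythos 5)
Your proposal is correct and follows essentially the same route as the paper: write $G$ as the surjective image of a product of finitely many closed $\Cplus$-subgroups, use the hypothesis on each factor to see that $\phi(G)$ is a constructible subgroup whose closure is an algebraic group, hence $\phi(G)$ is closed, and then descend the morphism along the surjection $U_{1}\times\cdots\times U_{N}\onto G$. The only difference is the final descent step: the paper invokes a separate lemma on surjective morphisms onto normal varieties (via Igusa's birational criterion), while your cleanest variant uses the constructible graph of $\phi|_{G}$ together with the fact that a bijective homomorphism of algebraic groups in characteristic zero is an isomorphism --- both are the same graph-closure trick in substance, and your version is adequate (your alternative via \'etale-local sections is the only sketchier part, but it is not needed).
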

\begin{proof}
There exist closed subgroups $U_{1},\ldots,U_{m} \subset G$ isomorphic to $\Cplus$ such that the multiplication map $\mu\colon U_{1}\times U_{2}\times\cdots\times U_{m} \to G$ is surjective. This gives the following commutative diagram
\[
\begin{CD}
U_{1}\times U_{2}\times\cdots\times U_{m}  @>\mu>> G \\
@VV{\tilde\phi:=\phi|_{U_{1}}\times\cdots\times\phi|_{U_{m}}}V   @VV{\phi|_{G}}V \\
\phi(U_{1})\times \phi(U_{2})\times\cdots\times \phi(U_{m})  @>\bar\mu>> \phi(G) \\
\end{CD}
\]
where all maps are surjective. It follows that $\overline{\phi(G)} \subset \Aut(Y)$ is a (closed) algebraic subgroup, and thus $\phi(G) = \overline{\phi(G)}$, because $\phi(G)$ is constructible. It remains to show that $\phi|_{G}$ is a morphism. This follows from the next lemma, because $G$ is normal, and $\mu$ and the composition $\phi|_{G} \circ \mu = \bar\mu\circ\tilde\phi$ are both morphisms.
\end{proof}

\begin{lem} Let $X,Y,Z$ be irreducible affine varieties where $Y$ is normal. Let $\mu\colon X \to Y$ be a surjective morphism and $\phi\colon Y \to Z$ an arbitrary map. If the composition $\phi\circ\mu$ is a morphism, then $\phi$ is a morphism.
\end{lem}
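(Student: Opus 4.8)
The plan is to realize $Y$, via the graph of $\phi$, as the target of a birational surjective morphism from an auxiliary variety, and then to use normality of $Y$ to conclude that this morphism is an isomorphism; composing its inverse with a projection will exhibit $\phi$ as a morphism. Concretely, set $f:=\phi\circ\mu\colon X\to Z$ (a morphism by hypothesis) and consider $\Psi:=(\mu,f)\colon X\to Y\times Z$. Since $f(x)=\phi(\mu(x))$, every point $\Psi(x)$ lies on the graph $\Gamma_\phi:=\{(y,\phi(y))\mid y\in Y\}$, and since $\mu$ is surjective every point of $\Gamma_\phi$ has this form; hence $\Psi(X)=\Gamma_\phi$. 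By Chevalley's theorem $\Gamma_\phi$ is therefore constructible, so its closure $\Gamma\subseteq Y\times Z$ is an irreducible affine variety (as $X$ is irreducible) containing $\Gamma_\phi$ as a dense constructible subset, hence containing a dense open subset $V_0\subseteq\Gamma$. Let $q:=\pr_Y|_\Gamma\colon\Gamma\to Y$. It is surjective, because $q(\Gamma_\phi)=Y$, and it is injective on $V_0$, being a restriction of the projection from a graph; a dominant, generically injective morphism of irreducible varieties in characteristic zero is birational, so $q$ is birational and in particular $\dim\Gamma=\dim Y$.

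It remains to show $q$ is an isomorphism. Let $\nu\colon\tilde\Gamma\to\Gamma$ be the normalization; then $\tilde q:=q\circ\nu\colon\tilde\Gamma\to Y$ is still birational and surjective, $\tilde\Gamma$ is normal of dimension $\dim Y$, and $\OOO(Y)\subseteq\OOO(\Gamma)\subseteq\OOO(\tilde\Gamma)$ inside the common function field $K$. Fix a prime divisor $D\subseteq Y$. Since $\tilde q$ is surjective, some irreducible component $E$ of $\tilde q^{-1}(D)$ dominates $D$; because $\dim E\geq\dim D=\dim\tilde\Gamma-1$ while $E\neq\tilde\Gamma$, the set $E$ is a prime divisor of the normal variety $\tilde\Gamma$. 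Hence $\OOO_{\tilde\Gamma,E}$ is a discrete valuation ring of $K$ containing the discrete valuation ring $\OOO_{Y,D}$ (compatibly with their embeddings in $K$, since $\tilde q$ sends the generic point of $E$ to that of $D$); two discrete valuation rings of $K$, one contained in the other, must coincide, so $\OOO(\tilde\Gamma)\subseteq\OOO_{\tilde\Gamma,E}=\OOO_{Y,D}$. As this holds for every prime divisor $D$ and $Y$ is normal, $\OOO(\tilde\Gamma)\subseteq\bigcap_D\OOO_{Y,D}=\OOO(Y)$; combined with $\OOO(Y)\subseteq\OOO(\Gamma)\subseteq\OOO(\tilde\Gamma)$ this gives $\OOO(Y)=\OOO(\Gamma)$, so $q$ is an isomorphism. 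Finally, for each $y\in Y$ the unique point of $\Gamma$ lying over $y$ is $(y,\phi(y))\in\Gamma_\phi$, hence $\phi=\pr_Z\circ q^{-1}$ is a morphism.

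The step I expect to be the real obstacle is the last one: proving that a birational surjective morphism onto the normal variety $Y$ is an isomorphism. This is exactly where the normality hypothesis enters and cannot be removed — e.g. the normalization map $\Aone\to C$ of a cuspidal cubic $C$ is birational and surjective, $C$ plays the role of $Y$ but is not normal, and the (set-theoretic) inverse $C\to\Aone$ is a non-regular function that becomes regular after pullback along $\Aone\to C$. The proof must genuinely use that surjectivity lets every prime divisor of $Y$ be dominated by a prime divisor of $\tilde\Gamma$, together with the identity $\OOO(Y)=\bigcap_D\OOO_{Y,D}$ valid for normal $Y$. A secondary technical point is that the auxiliary variety $\Gamma$ need not itself be normal, which is why one passes to its normalization $\tilde\Gamma$ and then squeezes the three coordinate rings.
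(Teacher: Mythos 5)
Your argument is correct and follows essentially the same route as the paper: pass to the graph $\Gamma_{\phi}\subset Y\times Z$, observe that it is the constructible image of $(\mu,\phi\circ\mu)$ (so its closure is irreducible and the projection $q\colon\overline{\Gamma_{\phi}}\to Y$ is surjective and birational), and conclude by the fact that a surjective birational morphism onto the normal variety $Y$ is an isomorphism, whence $\Gamma_{\phi}=\overline{\Gamma_{\phi}}$ and $\phi=\pr_{Z}\circ q^{-1}$. The only difference is that the paper cites this last ZMT-type step from Igusa (Lemma~4, p.~379), whereas you prove it from scratch via the normalization of $\overline{\Gamma_{\phi}}$, comparison of discrete valuation rings at dominating prime divisors, and the identity $\OOO(Y)=\bigcap_{D}\OOO_{Y,D}$ for normal $Y$ --- a correct, self-contained substitute for the citation.
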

\begin{proof}
We have the following commutative diagram of maps
\[
\begin{CD}
\Gamma_{\phi\circ\mu} @>\subset>> X \times Z\\
@VV{\bar\mu}V   @VV{\mu\times\id}V \\
\Gamma_{\phi} @>\subset>> Y \times Z\\
@VV{p}V @V{\pr_{Y}}VV\\
Y @= Y
\end{CD}
\]
where $\Gamma_{\phi\circ\mu}$ and $\Gamma_{\phi}$ denote the graphs of the corresponding maps. We have to show that $\Gamma_{\phi}\subset Y \times Z$ is closed and that $p$ is an isomorphism. The diagram shows that $\bar\mu$ is surjective, hence $\Gamma_{\phi}$ is constructible, and $p$ is bijective. Thus, the induced morphism $\bar p \colon \overline{\Gamma_{\phi}} \to Y$ is birational and surjective, hence an isomorphism  since $Y$ is normal (see \cite[Lemma~4, page~379]{Ig1973Geometry-of-absolu}). Since $p$ is bijective, we finally get $\Gamma_{\phi}=\overline{\Gamma_{\phi}}$.
\end{proof}

The proof of the following theorem will be given in the next section.
\begin{thm}\lab{thm.gen}
Let $X$ be a connected affine variety. If $\UU(X)$ is algebraically isomorphic to $\UU(\An)$, then $X$ is isomorphic to $\An$.
\end{thm}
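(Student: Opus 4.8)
The plan is to recover the $\SAff_n$-action (hence an isomorphism $X \simeq \An$) from the abstract group $\UU(X)$ together with its algebraic structure, using Proposition~\ref{Aff.prop} as the endgame. Let $\phi\colon \UU(\An) \to \UU(X)$ be an algebraic isomorphism. The standard group $\SAff_n = \SL_n(\CC)\ltimes (\Cn)^+$ sits inside $\UU(\An)$ and is generated by unipotent elements (it is generated by the root subgroups of $\SL_n$ and the translations, all of which are $\Cplus$'s). By the lemma preceding the theorem, $\phi(\SAff_n) \subset \UU(X)$ is a closed algebraic subgroup and $\phi|_{\SAff_n}$ is a homomorphism of algebraic groups; since $\phi$ is bijective with algebraic inverse, $\phi|_{\SAff_n}$ is an isomorphism of algebraic groups onto its image $G := \phi(\SAff_n) \subset \Aut(X)$. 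Thus $X$ carries a faithful action of a group isomorphic to $\SAff_n$, and I would like to invoke Proposition~\ref{Aff.prop}. Two things must still be checked: first, that the $G$-action on $X$ is the ``right'' one, i.e. that $G \cong \SAff_n$ as algebraic groups (not merely abstractly), which is guaranteed by the previous paragraph; second, and this is the crux, that $\dim X \leq n$.

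The bound $\dim X \leq n$ is where the real work lies, and I expect it to be the main obstacle. The idea is to extract the dimension of $X$ from group-theoretic/ind-geometric data that is preserved by an algebraic isomorphism. One approach: a general orbit of $\UU(X)$ is open in $X$ (flexibility, by the Arzhantsev--Flenner--Kaliman theory referenced in the paper), so $\dim X = \dim \UU(X)\cdot x$ for generic $x$, and one wants to see that this orbit dimension is encoded in the ``size'' of $\UU(X)$ as an algebraically generated ind-group in a way that matches $\UU(\An)$. Concretely I would argue as follows: for $\An$, the subgroup $\SAut(\An) \subset \UU(\An)$ acts transitively, so the orbit has dimension $n$; transporting via $\phi$, the image $\phi(\SAut(\An))$ — or at least a large enough algebraically generated subgroup built from finitely many $\Cplus$'s whose total orbit is already all of $\An$ in the source — acts on $X$ with an orbit that is the image of $\An$ under a dominant (indeed bijective on points, after composing with $\phi^{-1}$) constructible map, forcing $\dim X \le n$. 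Alternatively, and perhaps more cleanly: the subgroup $\SAff_n$ itself, acting on $\An$, has $\An$ as a single orbit of dimension $n$; since $\phi|_{\SAff_n}$ is an isomorphism of algebraic groups onto $G$, and since $G$ acts faithfully on $X$, if $\dim X > n$ one would need $G$-orbits of dimension $> n$ in $X$, yet one must rule this out by comparing with how $\UU(X) \cong \UU(\An)$ constrains which varieties $X$ can even support such a setup. The sharpest route is: show that $\UU(X)$ acts transitively on $X$ (again flexibility) and that the minimal number of $\Cplus$-factors needed to hit an open orbit, together with a dimension count coming from the structure of $\SAff_n$ inside $\UU(X)$, yields $\dim X \le n$.

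So the key steps, in order, are: (1) identify $\SAff_n$ as an algebraically-generated algebraic subgroup of $\UU(\An)$; (2) push it through $\phi$ using the preceding lemma to get a closed algebraic subgroup $G \subset \Aut(X)$ with $G \cong \SAff_n$ as algebraic groups, acting faithfully on $X$; (3) establish $\dim X \le n$ — the main obstacle — via flexibility/transitivity of $\UU(X)$ and a dimension comparison transported across $\phi$; (4) apply Proposition~\ref{Aff.prop} to conclude $X \simeq \An$ as $\SAff_n$-varieties, in particular as varieties. I expect step (3) to require the most care: the delicate point is that an algebraic isomorphism $\UU(X) \simeq \UU(\An)$ is a priori a very weak datum (it need not extend to an isomorphism of the ambient ind-groups $\Aut(X) \simeq \Aut(\An)$), so the dimension bound must be wrung out purely from the way $\Cplus$-subgroups and their generated subgroups sit inside $\UU$, for instance by locating inside $\UU(\An)$ a chain of $\Cplus$'s realizing an open orbit of dimension exactly $n$ and arguing that its $\phi$-image cannot do better than sweep out an $n$-dimensional subset of $X$ while, by flexibility, it must sweep out a dense one.
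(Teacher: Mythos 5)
Your steps (1), (2) and (4) — transporting $\SAff_{n}\subset\UU(\An)$ through $\phi$ via the lemma on algebraic homomorphisms to get a faithful action of a group isomorphic to $\SAff_{n}$ on $X$, and then invoking Proposition~\ref{Aff.prop} — are exactly the frame of the paper's proof. But step (3), the bound $\dim X\le n$, which you correctly identify as the crux, is not actually proved in your proposal, and the routes you sketch do not work. The basic obstruction is that $\phi$ is only an (algebraic) isomorphism of groups; it is not induced by, and does not produce, any morphism between $\An$ and $X$. So transitivity of a product $U_{1}\cdots U_{m}$ of $\Cplus$-subgroups on $\An$ transports to nothing about the orbits of $\phi(U_{1})\cdots\phi(U_{m})$ on $X$: the images could very well have small orbits on a high-dimensional $X$. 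Likewise, "a general orbit of $\UU(X)$ is open in $X$ by flexibility" is unjustified — flexibility is a property of $X$ that you would have to establish, not a consequence of $\UU(X)$ being abstractly large, and ruling out a faithful but non-transitive action of this big group on a higher-dimensional $X$ is precisely the content of the theorem. (A smaller slip: you write $\SAut(\An)\subset\UU(\An)$; the inclusion goes the other way, $\UU(\An)\subset\SAut(\An)$, and whether equality holds is open.) You also never address the case where $X$ is connected but reducible, which requires a separate argument.

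The paper closes the gap in step (3) by a completely different, representation-theoretic mechanism that survives transport along $\phi$ because it is phrased inside the group $\UU$: take the torus $T'=\phi(T_{n}')$ of dimension $n-1$ (image of the diagonal torus of $\SL_{n}(\CC)$, which lies in $\UU(\An)$ since $\SL_{n}(\CC)$ is generated by unipotents). By Liendo's theorem (Lemma~\ref{rootsAn.lem}) the root subgroups of $\Aut(\An)$ with respect to $T_{n}'$ have pairwise distinct weights, and since $\phi$ is an algebraic isomorphism the same holds for root subgroups of $\UU(X)$ with respect to $T'$. Now the modification construction produces, from a single root subgroup $U\subset\UU(X)$ and invariants $f\in\OOO(X)^{U}$, the family of root subgroups $f\cdot U$; distinctness of their weights forces $\OOO(X)^{U}$ to be multiplicity-free as a $T'$-module. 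Lemma~\ref{rootdim.lem} (via a dense $T'$-orbit in $X_{f}\quot U$) then yields $\dim X\le\dim T'+1=n$, and Proposition~\ref{Aff.prop} finishes the irreducible case; a separate argument (restriction to an irreducible component on which $\phi(\SAff_{n})$ acts faithfully, using surjectivity of $\OOO(X)^{U}\to\OOO(X_{1})^{U_{1}}$) handles connected reducible $X$. None of this weight/multiplicity-free machinery appears in your proposal, and without it, or a genuine substitute, the dimension bound — and hence the theorem — is not established.
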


A first consequence is our Theorem~\ref{thm1}.

\begin{cor}\lab{cor1=thm1}
Let $X$ be a connected affine variety. If  $\Aut(X) \simeq \Aut(\An)$ as ind-groups, then $X \simeq \An$.
\end{cor}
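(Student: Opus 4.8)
The goal is to derive Corollary~\ref{cor1=thm1} from Theorem~\ref{thm.gen}. The obvious strategy is: given an isomorphism of ind-groups $\Phi\colon\Aut(X)\simto\Aut(\An)$, show that $\Phi$ restricts to an algebraic isomorphism $\UU(X)\simeq\UU(\An)$, and then invoke Theorem~\ref{thm.gen} to conclude $X\simeq\An$. The key point is that the subgroup $\UU(-)$ is defined purely in terms of the ind-group structure of $\Aut(-)$ — it is the normal subgroup generated by all closed subgroups isomorphic to $\Cplus$ — so it should be preserved by any isomorphism of ind-groups.

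First I would observe that an isomorphism of ind-groups $\Phi$ carries a closed subgroup isomorphic to $\Cplus$ to a closed subgroup isomorphic to $\Cplus$: indeed $\Phi$ restricted to such a $U\subset\Aut(X)$ is an isomorphism of ind-varieties onto its image, the image is a closed subgroup (being the image of a closed embedding of algebraic groups — one uses that $\Phi$ and $\Phi^{-1}$ both map algebraic subsets to algebraic subsets, cf.\ the discussion of algebraic subsets in Section~\ref{notation.sec}), and an ind-group isomorphic to $\Cplus$ as an ind-variety with compatible group structure is isomorphic to $\Cplus$ as an algebraic group. Consequently $\Phi$ maps the set of $\Cplus$-subgroups of $\Aut(X)$ bijectively onto the set of $\Cplus$-subgroups of $\Aut(\An)$, hence maps the normal subgroup they generate onto the normal subgroup they generate, i.e.\ $\Phi(\UU(X))=\UU(\An)$. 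Thus $\Phi$ induces a bijective homomorphism $\phi\colon\UU(X)\to\UU(\An)$.

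It remains to check that $\phi$ and $\phi^{-1}$ are algebraic in the sense of the definition preceding Theorem~\ref{thm.gen}. But this is immediate from the previous paragraph: for any closed $U\subset\UU(X)$ isomorphic to $\Cplus$, the image $\phi(U)=\Phi(U)$ is a closed subgroup of $\Aut(\An)$ isomorphic to $\Cplus$ and lies in $\UU(\An)$, and $\phi|_U\colon U\to\phi(U)$ is the restriction of the ind-group isomorphism $\Phi$, hence an isomorphism of algebraic groups; the same applies to $\phi^{-1}=\Phi^{-1}|_{\UU(\An)}$. Therefore $\UU(X)\simeq\UU(\An)$ algebraically, and since $X$ is connected, Theorem~\ref{thm.gen} yields $X\simeq\An$.

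I expect the only genuinely delicate point to be the bookkeeping in the first step — namely verifying carefully that an ind-group isomorphism really does send $\Cplus$-subgroups to $\Cplus$-subgroups, which hinges on the fact that a closed subgroup of an ind-group that happens to be an algebraic subset is an algebraic group (stated in Section~\ref{notation.sec}) together with the rigidity statement that $\Cplus$ has no exotic ind-group structures. Everything after that is formal, since the definition of $\UU(-)$ and of ``algebraically isomorphic'' are tailored precisely to make this transfer work.
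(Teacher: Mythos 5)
Your proposal is correct and follows exactly the paper's route: the paper's proof consists of the single remark that an ind-group isomorphism $\Aut(X)\simto\Aut(\An)$ clearly induces an algebraic isomorphism $\UU(X)\simto\UU(\An)$, after which Theorem~\ref{thm.gen} applies. You have merely spelled out the details (closed $\Cplus$-subgroups go to closed $\Cplus$-subgroups, hence $\UU$ is preserved and the restriction is algebraic) that the paper leaves as ``clear from the definition.''
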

\begin{proof}
It is clear from the definition that an isomorphism $\Aut(X) \simto \Aut(\An)$ induces an algebraic isomorphism $\UU(X) \simto \UU(\An)$.
\end{proof}

Finally, we define the following closed subgroups of $\Aut(X)$:
$$
\Autalg(X):=\overline{\langle G \mid G \subset \Aut(X) \text{ connected algebraic}\rangle},
$$
$$
\SAutalg(X):=\overline{\langle U \mid U \subset \Aut(X) \text{ unipotent algebraic}\rangle}.
$$
We have $\SAutalg(X) = \overline{\UU(X)} \subset \Autalg(X) \subset \Aut(X)$. 
Now the same argument as above gives the next result.
\begin{cor}\lab{cor2}
Let $X$ be a connected affine variety. If $\SAutalg(X)$ is isomorphic to $\SAutalg(\An)$ as ind-groups, then $X$ is isomorphic to $\An$, and the same holds if we replace $\SAutalg$ by $\Autalg$.
\end{cor}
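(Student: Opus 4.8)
The plan is to reduce the statement to Theorem~\ref{thm.gen} in exactly the way Corollary~\ref{cor1=thm1} was deduced from it. The one thing to check is that the subgroup $\UU(X)$ can be recovered from the bare ind-group $\SAutalg(X)$ (respectively $\Autalg(X)$), together with enough of its algebraic structure, so that an isomorphism of ind-groups $\Theta\colon \SAutalg(X) \simto \SAutalg(\An)$ restricts to an \emph{algebraic} isomorphism $\UU(X) \simto \UU(\An)$, to which Theorem~\ref{thm.gen} then applies.

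First I would observe that inside $\SAutalg(X) = \overline{\UU(X)}$ the closed subgroups isomorphic to $\Cplus$ are precisely the closed $\Cplus$-subgroups of the ambient group $\Aut(X)$: each such subgroup is a unipotent algebraic subgroup, hence lies in $\SAutalg(X)$ by the very definition of that group, and conversely a closed $\Cplus$ sitting in $\SAutalg(X)$ is in particular a closed $\Cplus$ in $\Aut(X)$. Since $\UU(X)$ is by definition the subgroup of $\Aut(X)$ generated by all such one-parameter unipotent subgroups, it follows that $\UU(X)$ is exactly the subgroup of $\SAutalg(X)$ generated by its closed $\Cplus$-subgroups. The same description holds verbatim for $\Autalg(X)$, since $\Cplus$ is connected algebraic and hence every closed $\Cplus\subset\Aut(X)$ lies in $\Autalg(X)$ too.

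Next I would use the standard fact that an isomorphism of ind-groups carries closed algebraic subgroups to closed algebraic subgroups and restricts to isomorphisms of varieties (hence of algebraic groups) between them; in particular $\Theta$ maps the closed $\Cplus$-subgroups of $\SAutalg(X)$ bijectively onto those of $\SAutalg(\An)$, and on each such subgroup it is an isomorphism of algebraic groups. Combined with the previous paragraph, $\Theta$ maps $\UU(X)$ onto $\UU(\An)$, and by construction $\Theta|_{\UU(X)}\colon \UU(X) \to \UU(\An)$ is a bijective homomorphism which is algebraic, with algebraic inverse $\Theta^{-1}|_{\UU(\An)}$. Thus $\UU(X) \simeq \UU(\An)$ as in the definition of an algebraic isomorphism, and Theorem~\ref{thm.gen} yields $X \simeq \An$. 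Running the identical argument with $\Autalg$ in place of $\SAutalg$ finishes the proof.

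There is no serious obstacle here: the only genuine point requiring attention is the first step, namely that $\UU(X)$ is an intrinsic feature of the ind-group $\SAutalg(X)$ (resp. $\Autalg(X)$) — that every closed $\Cplus$-subgroup of the full automorphism group already lies in the smaller group and conversely. Once this is granted, the remainder is the formal transport of structure already used for Corollary~\ref{cor1=thm1}, and all the geometric content is absorbed into Theorem~\ref{thm.gen}.
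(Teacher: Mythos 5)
Your proof is correct and follows essentially the same route as the paper: the paper's proof of this corollary is simply the observation (as in Corollary~\ref{cor1=thm1}) that an ind-group isomorphism $\SAutalg(X)\simto\SAutalg(\An)$ (resp.\ with $\Autalg$) induces an algebraic isomorphism $\UU(X)\simto\UU(\An)$, after which Theorem~\ref{thm.gen} applies. The point you isolate --- that every closed $\Cplus$-subgroup of $\Aut(X)$ already lies in $\SAutalg(X)$ (resp.\ $\Autalg(X)$), so $\UU(X)$ is intrinsic to the smaller group --- is exactly what the paper's ``same argument as above'' tacitly uses, and you have verified it correctly.
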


\par\smallskip
\section{Root subgroups and modifications}

Let $\GGG$ be an ind-group, and let $T \subset \GGG$ be a torus.

\begin{defn}
A closed subgroup $U \subset \GGG$ isomorphic to $\Cplus$ and normalized by $T$  is called a {\it root subgroup} with respect to $T$. The character of $T$ on $\Lie U \simeq \CC$ is called the {\it weight of $U$}.
\end{defn}
Let $X$ be an affine variety and consider a nontrivial action of $\Cplus$ on $X$, given by $\lambda\colon\Cplus \to \Aut(X)$. If $f\in\OOO(X)$ is $\Cplus$-invariant, then we define the {\it modification\/} $f\cdot\lambda$ of $\lambda$ in the following way (see \cite[section~8.3]{FuKr2014On-the-geometry-of}):
$$
(f\cdot\lambda)(s)x := \lambda(f(x)s)x \text{ \ for \ }s\in\CC \text{ and }x\in X.
$$
It is easy to see that this is again a $\Cplus$-action. In fact, if the corresponding locally nilpotent vector field is $\delta_{\lambda}$, the $f\delta_{\lambda}$ is again locally nilpotent, because $f$ is $\lambda$-invariant, and defines the modified $\Cplus$-action $f\cdot\lambda$. This modified action $f\cdot\lambda$ is trivial if and only if $f$ vanishes on every irreducible component $X_{i}$ of $X$ where the action $\lambda$ is nontrivial.  It is clear that the orbits of $f\cdot\lambda$ are contained in the orbits of $\lambda$ and that they are equal on the open subset $X_{f} \subset X$. In particular, if $X$ is irreducible and $f\neq 0$, then $\lambda$ and $f\cdot\lambda$ have the same invariants.

If $U \subset \Aut(X)$ is isomorphic to $\Cplus$ and if $f\in\OOO(X)^{U}$  is a $U$-invariant, then we define in a similar way the {\it modification\/} $f\cdot U$ of $U$. Choose an isomorphism $\lambda\colon \Cplus \simto U$ and set $f\cdot U:=(f\cdot\lambda)(\Cplus)$, the image of the modified action. Note that $\Lie (f\cdot U) = f\Lie U \subset \Lie\Aut(X) \subset \VF(X)$ where we use the fact that $\VF(X)$ is a $\OOO(X)$-module.

If a torus $T$ acts linearly and rationally on a vector space $V$ of countable dimension, then we call $V$ {\it multiplicity free} if the weight spaces $V_{\alpha}$ are all of dimension $\leq 1$. The following lemma is crucial.

\begin{lem}\lab{rootdim.lem}
Let $X$ be an irreducible affine variety, and let $T \subset \Aut(X)$ be a torus. Assume that there exists a root subgroup $U \subset \Aut(X)$ with respect to $T$ such that $\OOO(X)^{U}$ is multiplicity-free. Then $\dim T \leq \dim X \leq \dim T + 1$.
\end{lem}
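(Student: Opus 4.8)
The plan is to extract geometric information about $X$ from the existence of $U$ and the multiplicity-freeness of $\OOO(X)^U$. First I would observe that $\dim T \leq \dim X$ is essentially formal: since $T$ acts faithfully on $X$, and $U$ is normalized by $T$, consider a generic $U$-orbit closure and the generic $T$-orbit; more precisely, faithfulness of the $T$-action on the irreducible variety $X$ already forces $\dim T \leq \dim X$ (a torus acting faithfully on a variety of dimension $d$ has rank at most $d$, since the generic stabilizer is trivial and orbits have dimension equal to $\dim T$). So the substance of the lemma is the upper bound $\dim X \leq \dim T + 1$.

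For the upper bound, I would use the modifications introduced just above the lemma. The key point is that for every $U$-invariant $f \in \OOO(X)^U$ we get a new root subgroup $f\cdot U$ with respect to $T$ (it is normalized by $T$ because $U$ is and $f$ is $T$-semiinvariant — here one checks that if $f$ is a $T$-eigenvector then $f\cdot U$ is again $T$-normalized, with weight shifted by $\chi_f$), and $\Lie(f\cdot U) = f\cdot \Lie U \subset \VF(X)$. Now fix a nonzero $\delta \in \Lie U$, so $\Lie U = \CC\delta$. Decomposing $\OOO(X)^U$ into $T$-weight spaces, multiplicity-freeness says each weight space is at most one-dimensional. The collection of all weight vectors $f$ in $\OOO(X)^U$ gives root subgroups $f\cdot U$ whose Lie algebras $\CC f\delta$ are $T$-weight lines in $\VF(X)$; since any two distinct weight vectors of $\OOO(X)^U$ have distinct $T$-weights (the weight being $\chi_f + \mathrm{wt}(U)$), these lines are linearly independent. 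I expect the heart of the argument to be a dimension count: the $\OOO(X)^U$-module generated by $\delta$ inside $\VF(X)$, being a free rank-one $\OOO(X)^U$-module (as $\delta$ acts as a nonzero vector field on an irreducible variety, so $f\delta = 0$ forces $f=0$), has "size" controlled on one hand by $\OOO(X)^U$ and on the other by the generic rank of vector fields, which is $\dim X$.

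The cleanest route, which I would try to push through: the generic $U$-orbits foliate a $U$-stable open set, and $\OOO(X)^U$ is (birationally) the function field of the quotient, which has transcendence degree $\dim X - 1$. Multiplicity-freeness of the $T$-action on $\OOO(X)^U$ then bounds $\dim T$ from below in terms of $\mathrm{trdeg}\,\OOO(X)^U$: a multiplicity-free rational $T$-action on a finitely generated domain $R$ of transcendence degree $r$ forces $\dim T \geq r$, because a maximal set of algebraically independent homogeneous elements generates a subtorus acting faithfully, and multiplicity-freeness means the weight monoid generates a sublattice of rank $\geq r$ in the character lattice of $T$. Combining, $\dim T \geq \mathrm{trdeg}\,\OOO(X)^U = \dim X - 1$, which is exactly $\dim X \leq \dim T + 1$. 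The main obstacle I anticipate is making the transition from $\OOO(X)^U$ (which need not be finitely generated, and whose spectrum need not be the honest geometric quotient) to a legitimate transcendence-degree statement; I would handle this by passing to the field of $U$-invariant rational functions, where the Rosenlicht quotient exists and has dimension $\dim X - 1$, and checking that multiplicity-freeness of the $T$-action is inherited there so that the character-lattice rank argument applies.
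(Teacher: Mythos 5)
Your proposal is correct in substance, but it follows a genuinely different route from the paper. The paper first invokes a result of Kraft--Dufresne to find a $T$-semi-invariant $f\in\OOO(X)^{U}$ with $\OOO(X)^{U}_{f}=\OOO(X_{f})^{U}$ finitely generated, passes to the algebraic quotient $Z=X_{f}\quot U$ (of dimension $\dim X-1$), and then cites the classical criterion that an irreducible affine $T$-variety with multiplicity-free coordinate ring has a dense $T$-orbit, whence $\dim X-1=\dim Z\leq \dim T$. You instead avoid the finite-generation issue altogether: you work directly with $\OOO(X)^{U}$, which is $T$-stable and spanned by weight vectors, pick $\mathrm{trdeg}\,\OOO(X)^{U}$ algebraically independent weight vectors, and observe that multiplicity-freeness forces their weights to be $\ZZ$-linearly independent (a nontrivial integral relation would produce two non-proportional monomials of equal weight), so $\dim T\geq \mathrm{trdeg}\,\OOO(X)^{U}$. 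This character-lattice argument is a legitimate replacement for the dense-orbit criterion and is in some ways more elementary, since it needs neither finite generation nor the geometric quotient.

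Two points need tightening. First, the step $\mathrm{trdeg}\,\OOO(X)^{U}=\dim X-1$ is exactly where the content lies: Rosenlicht gives $\mathrm{trdeg}\,\CC(X)^{U}=\dim X-1$ for the nontrivial $\Cplus$-action, but you must also know that $\OOO(X)^{U}$ has the same transcendence degree, i.e. that every $U$-invariant rational function is a quotient of $U$-invariant regular functions. This is the standard fact for locally nilpotent derivations (if $\delta$ is the locally nilpotent vector field generating $U$ and $p/q$ is $\delta$-invariant with $\deg_{\delta}p+\deg_{\delta}q$ minimal, then $q\delta p=p\delta q$ forces $\delta p=\delta q=0$); citing or proving it closes the gap you yourself flagged. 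Second, your proposed fix of ``checking that multiplicity-freeness is inherited by the field of rational invariants'' is not the right formulation, since the $T$-action on $\CC(X)^{U}$ is not a rational module; but it is also unnecessary, because your rank argument applies verbatim to $\OOO(X)^{U}$ itself, where multiplicity-freeness is the hypothesis. The preliminary discussion of the modifications $f\cdot U$ is a detour that plays no role in the final argument. The first inequality $\dim T\leq\dim X$ is handled as in the paper.
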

\begin{proof} The first inequality $\dim T \leq \dim X$ is clear, because $T$ acts faithfully on $X$.  It follows from  \cite{KrDu2014Invariants-and-Sep}) that there exists a $T$-semi-invariant $f\in\OOO(X)^{U}$ such that $\OOO(X)_{f}^{U}$ is finitely generated. Clearly,  $\OOO(X)_{f}^{U}$ is $T$-stable and  multiplicity-free.  The algebra $\OOO(X)_{f}^{U}=\OOO(X_{f})^{U}$ is the coordinate ring of the algebraic quotient $Z:=X_{f}\quot U$ on which $T$ acts. It follows from \cite[II.3.4~ Satz 5]{Kr1984Geometrische-Metho}) that $T$ has a dense orbit in $Z$, and so $\dim Z \leq \dim T$. Since $\dim Z = \dim X_{f}\quot U = \dim X_{f} - 1=\dim X -1$, we get the second inequality.
\end{proof}

\begin{lem}\lab{UAn.lem}
We have $\UU(\An) \subset \SAut(\An)$, and its closure $\overline{\UU(\An)}$ is connected. Moreover,  $\Lie \overline{\UU(\An)} = \Lie \SAut(\An)$, hence it is a simple Lie algebra.
\end{lem}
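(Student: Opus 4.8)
The plan is to prove the three assertions in turn. The inclusion $\UU(\An)\subset\SAut(\An)$ and the identification of Lie algebras are essentially formal, the latter resting on Remark~\ref{simple.rem}; the only point that will require genuine care is the connectedness of the closure $\overline{\UU(\An)}$.

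For the inclusion I would argue as follows. Every unipotent element of $\Aut(\An)$ has the form $\exp\delta$ for a locally nilpotent vector field $\delta$, hence lies in the closed subgroup $\{\exp(t\delta)\mid t\in\CC\}\cong\Cplus$; thus $\UU(\An)$ is generated by its closed subgroups $U\cong\Cplus$. For each such $U$ the restriction $\jac|_{U}\colon U\to\Cst$ is a morphism of algebraic groups, and it is trivial since $\Cplus$ admits no nonconstant characters in characteristic zero. Hence $U\subset\SAut(\An)=\ker\jac$ for every $\Cplus$-subgroup $U$, and therefore $\UU(\An)\subset\SAut(\An)$; since $\SAut(\An)$ is closed, also $\overline{\UU(\An)}\subset\SAut(\An)$. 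Moreover $\UU(\An)$ is normal in $\Aut(\An)$ by construction, so $\overline{\UU(\An)}$ is a closed normal subgroup of $\Aut(\An)$.

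Next, for connectedness, I would first note that $\UU(\An)$ itself is connected: if $g=u_{1}\cdots u_{k}$ with $u_{i}\in U_{i}\cong\Cplus$, then iterated multiplication gives a morphism $U_{1}\times\cdots\times U_{k}\to\Aut(\An)$ whose image lies in $\UU(\An)$ and contains $e$ and $g$; restricting it to the affine line in $U_{1}\times\cdots\times U_{k}\cong\AA^{k}$ through the origin and $(u_{1},\ldots,u_{k})$ yields an irreducible curve inside $\UU(\An)$ through $e$ and $g$. Being connected, $\UU(\An)$ is contained in the identity component $\overline{\UU(\An)}^{\circ}$; and since $\overline{\UU(\An)}^{\circ}$ is closed and contains $\UU(\An)$, it contains $\overline{\UU(\An)}$, whence $\overline{\UU(\An)}^{\circ}=\overline{\UU(\An)}$ is connected. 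The point needing care is that the identity component of an ind-group is a closed subgroup, so that "closed and dense" forces equality; this is a standard fact about ind-groups (see \cite{FuKr2014On-the-geometry-of} and \cite{KrZa2014Locally-finite-gro}). This is the main obstacle of the proof.

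Finally, to identify the Lie algebras, put $\aa:=\Lie\overline{\UU(\An)}=T_{e}\overline{\UU(\An)}$. As the tangent space at $e$ of a closed normal subgroup of $\Aut(\An)$, $\aa$ is an ideal of $\Lie\Aut(\An)$. It is nonzero, since $\aa\supset\Lie U\neq 0$ for any $\Cplus$-subgroup $U\subset\overline{\UU(\An)}$, for instance a one-parameter group of translations. By the first step $\aa\subset\Lie\SAut(\An)$, and this inclusion into $\Lie\Aut(\An)$ is strict because $d\jac\neq 0$: on the torus of scalar matrices $\Cst\subset\GL_{n}(\CC)\subset\Aut(\An)$ the map $\jac$ equals $\lambda\mapsto\lambda^{n}$, whose differential at $1$ is multiplication by $n$. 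By Remark~\ref{simple.rem}, for $n\geq 2$ the only proper ideal of $\Lie\Aut(\An)$ is $\Lie\SAut(\An)$; hence $\aa=\Lie\SAut(\An)$, which is simple by the same remark. (The case $n=1$ is trivial, $\SAut(\Aone)=\UU(\Aone)$ being the closed connected group of translations.)
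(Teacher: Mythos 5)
Your proof is correct, and for the first two claims it follows essentially the paper's route: the inclusion comes from the fact that a $\Cplus$-subgroup admits no nontrivial character, and connectedness is exactly the paper's argument that $\UU(\An)\subset\overline{\UU(\An)}^{\circ}$, after which "closed and dense" forces $\overline{\UU(\An)}^{\circ}=\overline{\UU(\An)}$; you usefully spell out the irreducible curve through $e$ and $g$ that the paper leaves implicit, and the fact about the identity component you call "standard" is precisely Lemma~\ref{connectedcomp.lem} of this paper, which you should cite instead of the external references. For the identification of the Lie algebras you take a genuinely different route. The paper invokes the part of Remark~\ref{simple.rem} asserting that $\Lie\SAut(\An)$ is generated by the Lie algebras of algebraic subgroups, so that $\Lie\overline{\UU(\An)}$, being a Lie subalgebra containing $\Lie U$ for the relevant unipotent subgroups $U$, must equal $\Lie\SAut(\An)$. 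You instead use normality of $\overline{\UU(\An)}$ in $\Aut(\An)$ to see that $\aa:=\Lie\overline{\UU(\An)}$ is a nonzero ideal of $\Lie\Aut(\An)$ contained in the proper subalgebra $\Lie\SAut(\An)$, and then invoke the other half of Remark~\ref{simple.rem} (uniqueness of the nontrivial proper ideal, $n\geq 2$). Both arguments rest on Remark~\ref{simple.rem}; yours has the advantage of not needing to know which algebraic subgroups lie in $\overline{\UU(\An)}$, but it silently uses the fact that for ind-groups the tangent space at $e$ of a closed \emph{normal} ind-subgroup is an ideal of the ambient Lie algebra (i.e.\ that $\Ad(g)$ preserves it and that $d\Ad_{e}=\ad$). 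This is true for affine ind-groups and is the kind of statement found in \cite{FuKr2014On-the-geometry-of} and \cite{Ku2002Kac-Moody-groups-t}, but it is not stated in the present paper, so you should make that citation explicit rather than treat it as automatic.
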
 

\begin{proof}
The first statement is obvious, since every unipotent algebraic group is contained in $\SAut(\An)$. The second claim follows from  $\UU(\An) \subset \overline{\UU(\An)}^{\circ}$ (see Lemma~\ref{connectedcomp.lem} in the next section). For the last statement we remark that $\Lie\SAut(\An)$ is generated by the Lie algebras of the algebraic subgroups (Remark~\ref{simple.rem}).
\end{proof}
The group $\UU(\An)$ contains the normal subgroup generated by all tame elements. But we do not know if $\UU(\An) = \SAut(\An)$ or at least $\overline{\UU(\An)} = \SAut(\An)$, except for $n=2$ where this is well-known.

Denote by $T_{n}\subset \GL_{n}(\CC) \subset \Aut(\An)$ the diagonal torus and set $T_{n}':=T_{n}\cap\SL_{n}(\CC)_{n}$. The next result can be found in \cite[Theorem~1]{Li2011Roots-of-the-affin}.

\begin{lem}\lab{rootsAn.lem}
Root subgroups of $\Aut(\An)$ with respect to $T_{n}'$ exist, and they have different weights.
\end{lem}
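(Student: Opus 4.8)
The plan is to classify all root subgroups of $\Aut(\An)$ with respect to $T_n'$ explicitly. For existence, I would exhibit, for every index $i$ and every multi-index $a=(a_1,\ldots,a_n)\in\NN^n$ with $a_i=0$, the subgroup $U_{i,a}\subset\Aut(\An)$ given by $s\mapsto(x_1,\ldots,x_i+s\,x^a,\ldots,x_n)$ (with $x^a:=x_1^{a_1}\cdots x_n^{a_n}$); since $x^a\dd{i}$ involves no $x_i$ it is locally nilpotent, so $U_{i,a}\cong\Cplus$ is a closed subgroup with $\Lie U_{i,a}=\CC\,x^a\dd{i}$, and conjugation by $\mathrm{diag}(t_1,\ldots,t_n)$ multiplies $x^a\dd{i}$ by a character of $T_n$, so the full diagonal torus $T_n$ (a fortiori $T_n'$) normalizes $U_{i,a}$. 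For distinctness, I would write $\eps_1,\ldots,\eps_n$ for the restrictions to $T_n'$ of the coordinate characters of $T_n$; these generate the character group of $T_n'$ subject only to $\eps_1+\cdots+\eps_n=0$, and the weight of $U_{i,a}$ is the class of $a-e_i$, that is $\sum_j a_j\eps_j-\eps_i$. If $U_{i,a}$ and $U_{j,b}$ had the same weight, lifting the relation to $\ZZ^n$ gives $a-b=e_i-e_j+c\,(1,\ldots,1)$ for some $c\in\ZZ$; comparing the $i$-th and $j$-th coordinates (where $a_i=0$ and $b_j=0$) forces $a=b$ when $i=j$, while for $i\ne j$ it gives simultaneously $c\le-1$ and $c\ge1$, which is absurd.

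It remains to show that every root subgroup $U$ with respect to $T_n'$ is one of the $U_{i,a}$; this is the heart of the matter. Write $\Lie U=\CC\delta$, so $\delta\ne0$ is locally nilpotent and a $T_n'$-weight vector, say of weight $\chi$. Since $\jac$ is trivial on any closed $\Cplus$-subgroup (a unipotent group admits no nontrivial homomorphism to a torus), $\delta\in\Lie\SAut(\An)$, i.e.\ $\Div\delta=0$ by Remark~\ref{LieAutAn.rem}. The $T_n'$-invariants are $\OOO(\An)^{T_n'}=\CC[u]$ with $u:=x_1\cdots x_n$, so the $T_n'$-quotient is the map $\An\to\Aone$, $(x_1,\ldots,x_n)\mapsto x_1\cdots x_n$, exhibiting $\An$ as a $T_n'$-variety of complexity one whose general fibre is a torus $(\Cst)^{n-1}$. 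I would first rule out $\chi=0$: then $\delta=\sum_i P_i(u)\,x_i\dd{i}$ with $P_i\in\CC[u]$, its restriction to $\CC[u]$ is the derivation $u\bigl(\sum_i P_i\bigr)\frac{d}{du}$, which is locally nilpotent only if $\sum_i P_i=0$; but then $u\in\ker\delta$ and $\delta$ restricts to a locally nilpotent derivation of a general fibre, which is a torus and hence has none, so $\delta$ vanishes on a dense open set and $\delta=0$, a contradiction. For $\chi\ne0$, let $w\in\NN^n$ be the representative of $\chi$ with $\min_j w_j=0$ (so $w\ne0$); a count of lattice points shows that every monomial vector field of weight $\chi$ equals $u^c\,x^w x_i\dd{i}$ for some $i$ and some $c\ge0$, with exactly one exception: when $w$ has a unique zero coordinate $i_0$, the field $x^{w-(1,\ldots,1)+e_{i_0}}\dd{i_0}$ (which involves no $x_{i_0}$, as $w_{i_0}=0$ and $w_k\ge1$ for $k\ne i_0$). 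Hence $\delta=\alpha\,x^{w-(1,\ldots,1)+e_{i_0}}\dd{i_0}+x^w\sum_i P_i(u)\,x_i\dd{i}$ with $P_i\in\CC[u]$, the first term present only if $w$ has a unique zero.

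Finally I would iterate $\delta$. One computes $\delta(u)=x^w\bigl(\alpha+u\sum_i P_i\bigr)$ and, inductively, $\delta^m(u)=x^{mw}g_m(u)$ for explicit polynomials $g_m$ (satisfying $g_{m+1}=m\bigl(\sum_i w_i P_i\bigr)g_m+\bigl(\alpha+u\sum_i P_i\bigr)g_m'$), and likewise $\delta^m(x_j)$ is a fixed monomial times a polynomial in $u$ produced by an analogous recursion. Since $x^{mw}$ is never a zero-divisor and $w\ne0$, the local nilpotency of $\delta$ — sharpened by $\Div\delta=0$, which already forces $\sum_i w_iP_i+\frac{d}{du}\bigl(\alpha+u\sum_iP_i\bigr)=0$ — turns into polynomial, essentially ODE-type, constraints on $\alpha$ and the $P_i$ whose only solution has all $P_i=0$ (and then $\alpha\ne0$, so $w$ does have a unique zero); for instance each surviving $P_j$ would have to satisfy $\alpha P_j'+(\text{const})\,P_j^2=0$, which has no nonzero polynomial solution. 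Then $\delta=\alpha\,x^{w-(1,\ldots,1)+e_{i_0}}\dd{i_0}$, i.e.\ $U=U_{i_0,\,w-(1,\ldots,1)+e_{i_0}}$, and together with the distinctness of weights established above this proves the lemma. The step I expect to be the main obstacle is precisely this last one: ruling out the ``horizontal'' contribution $x^w\sum_i P_i(u)x_i\dd{i}$ to $\delta$ amounts exactly to the classification of homogeneous locally nilpotent derivations of the complexity-one $T_n'$-variety $\An$, and making the iteration argument airtight is the one genuinely technical point — alternatively one may simply cite this classification from Liendo's work.
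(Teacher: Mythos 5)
The paper gives no argument for this lemma at all: it simply cites Liendo's Theorem~1, which is precisely the classification of root subgroups of $\Aut(\An)$ with respect to $T_n'$. So your fallback option (``simply cite this classification from Liendo's work'') is exactly the paper's route, and the parts you do prove --- existence of the monomial subgroups $U_{i,a}$, the pairwise distinctness of their $T_n'$-weights via the lattice computation modulo $\ZZ(1,\ldots,1)$, the description of the weight-$\chi$ vector fields, and the exclusion of weight zero via restriction to $\CC[u]$ and to the torus fibres --- are correct and would be a sensible supplement to the bare citation.

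As a self-contained proof, however, there is a genuine gap exactly at the step you flag: ruling out the horizontal part $x^w\sum_i P_i(u)\,x_i\partial_{x_i}$. The constraints you actually display do not suffice. Once $\Div\delta=0$ is imposed, your recursion for $\delta^m(u)$ becomes vacuous: with $g_1=\alpha+u\sum_iP_i$ and $W=\sum_iw_iP_i$ one has $g_2=g_1\bigl(W+g_1'\bigr)$, and $\Div\delta=0$ says precisely $W+g_1'=0$, so $\delta^2(u)=0$ for \emph{every} choice of the $P_i$. Hence all the content must come from the recursions for $\delta^m(x_j)$, $Q_{k+1}=(kW+P_j)Q_k+g_1Q_k'$, which you neither write out nor analyze (after substituting $W=-g_1'$ the two summands can have equal degree and their leading terms can cancel, so a naive degree count does not close), and the sample equation $\alpha P_j'+c\,P_j^2=0$ is never derived. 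A concrete witness that your stated conditions are insufficient: for $n=2$, $w=(0,m)$ with $m\ge1$ and $c\neq0$, the field $\delta=\alpha\,x_2^{m-1}\partial_{x_1}+c\,x_2^{m}\bigl((m+1)x_1\partial_{x_1}-x_2\partial_{x_2}\bigr)$ is $T_2'$-homogeneous of weight $\chi$, satisfies $\Div\delta=0$ and $\delta^2(u)=0$, yet is not locally nilpotent, since $\delta^{k}(x_2)$ is a nonzero multiple of $x_2^{km+1}$ for all $k$. Eliminating such fields is exactly the classification of homogeneous locally nilpotent derivations on the complexity-one $T_n'$-variety $\An$, i.e.\ the content of Liendo's theorem; so either carry out the $\delta^m(x_j)$ analysis in full, or do what the paper does and quote it.
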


\begin{proof}[Proof of Theorem~\ref{thm.gen}]
Note that $\SL_{n}(\CC)$ and $\SAff_{n}(\CC)$ both belong to $\UU(\An)$ as well as all root subgroups $U$. Fix an algebraic isomorphism $\phi\colon\UU(\An) \simto \UU(X)$ and denote by $T'$ the image of $T_{n}'$. 

(a) Assume first that $X$ is irreducible.  By Lemma~\rref{rootsAn.lem}, there exists a root subgroup $U\subset \UU(X)$ with respect to $T'$, and all root subgroups have different weights. In particular, the root subgroups from $\OOO(X)^{U}\cdot U \subset \UU(X)$ have different weights which implies that $\OOO(X)^{U}$ is multiplicity-free, because the map $\OOO(X)^{U} \to \OOO(X)^{U}\cdot U$ is injective. Hence, by Lemma~\rref{rootdim.lem}, $\dim X \leq \dim T' + 1=n$, and the claim follows from Proposition~\rref{Aff.prop}. 

\par\smallskip
(b) Let $X = \bigcup_{i}X_{i}$ be the decomposition into irreducible components. Since $\overline{\UU(X)}$ is connected by Lemma~\ref{UAn.lem} it follows that the components $X_{i}$ are stable under $\overline{\UU(X)}$. Moreover, at least one of the restriction morphisms $\rho_{i}\colon \UU(X) \to \UU(X_{i})$, say $\rho_{1}$, is injective on the image $\phi(\SAff_{n})\subset\UU(X)$, because every nontrivial normal closed subgroup of $\SAff_{n}$ contains the translations.  Let $T_{1}:=\rho_{1}(T') \subset \UU(X_{1})$ be the image of $T'$. Choose a root subgroup $U\subset \UU(X)$ in the image $\phi(\SL_{n}(\CC))$, and denote by $U_{1}:=\rho_{1}(U)$ its image in $\UU(X_{1})$. Then $U$ is a maximal unipotent subgroup of a closed subgroup of $\UU(X)$ isomorphic to $\SLtwo$ which implies that the restriction morphism $\rho_{1}^{*}\colon \OOO(X)^{U} \to \OOO(X_{1})^{U_{1}}$ is surjective. Since the root subgroups of $\UU(X)$ have all different weights this also holds for the root subgroups in $\OOO(X_{1})^{U_{1}}\cdot U_{1}$. Hence the $T_{1}$-action on $\OOO(X_{1})^{U_{1}}$ is multiplicity-free and so $\dim X_{1}\leq n$. Thus $X_{1}\simeq \An$, by Proposition~\ref{Aff.prop}, which implies that $X = X_{1} \simeq \An$, because $X$ is connected.
\end{proof}

\par\medskip
\section{Finite dimensional automorphism groups}
It is well-known that for a smooth affine curve $C$ the automorphism group $\Aut(C)$ is finite except for 
$C\simeq \CC,\Cst$. We will see in the next section that every finite group appears as automorphism group of a smooth affine curve. There also exist examples of smooth affine surfaces with a discrete non-finite automorphism group, see \cite[Proposition~7.2.5]{FuKr2014On-the-geometry-of}. Recall that an ind-group $\GGG = \bigcup_{k}\GGG_{k}$ is called {\it discrete\/} if $\GGG_{k}$ is finite for all $k$, or equivalently, if $\Lie\GGG = \{0\}$. 

\begin{defn} 
An ind-group $\GGG = \bigcup_{k}\GGG_{k}$ is called {\it finite dimensional}, $\dim\GGG < \infty$, if $\dim \GGG_{k}$ is bounded above. In this case we put $\dim\GGG := \max_{k}\dim\GGG_{k}$. 
\end{defn}

\begin{defn}
Let $\GGG = \bigcup_{k}\dim\GGG_{k}$ be an ind-group. Define
$$
\GGG^{\circ}:= \bigcup_{k}\GGG_{k}^{\circ}
$$
where $\GGG_{k}^{\circ}$ denotes the connected  component of $\GGG_{k}$ which contains $e\in\GGG$.
\end{defn}
Recall that an ind-group is called {\it connected} if for every $g\in\GGG$ there is an irreducible curve $D$ and a morphism $D \to \GGG$ whose image contains $e$ and $g$. This implies that for every $k$ there is a $k' \geq k$ and an irreducible component $C \subset \GGG_{k'}$ which contains $\GGG_{k}$ (cf.  \cite[Lemma~8.1.2]{FuKr2014On-the-geometry-of}).

\begin{lem} \lab{connectedcomp.lem}
Let $\GGG = \bigcup_{k}\GGG_{k}$ be an ind-group.
\be
\item $\GGG^{\circ} \subset \GGG$ is a connected closed normal subgroup of countable index. In particular, $\Lie\GGG=\Lie\GGG^{\circ}$.
\item We have  $\dim\GGG < \infty$ if and only if  $\GGG^{\circ}\subset \GGG$ is an algebraic group.
\item We have  $\dim\GGG < \infty$ if and only if $\dim\Lie\GGG < \infty$.
\ee
\end{lem}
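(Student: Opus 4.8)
The plan is to establish the three parts in order, leaning on the structure theory of algebraic groups applied componentwise to the filtration $\GGG=\bigcup_k \GGG_k$. For part (1), I would first note that each $\GGG_k^\circ$ is a closed subvariety of $\GGG_k$ and that $\GGG_k^\circ \subset \GGG_{k+1}^\circ$ (since a connected variety containing $e$ maps into the connected component of $e$ in any larger variety), so $\GGG^\circ = \bigcup_k \GGG_k^\circ$ is genuinely a closed ind-subvariety of $\GGG$. To see it is a subgroup, observe that multiplication $\GGG_k^\circ \times \GGG_k^\circ \to \GGG_m$ (for suitable $m$) has connected source containing $(e,e)\mapsto e$, hence lands in $\GGG_m^\circ$; similarly for inversion. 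Normality follows because for fixed $g\in\GGG_j$ the conjugation map $\GGG_k^\circ \to \GGG_m$ is a morphism from a connected variety sending $e$ to $e$. Connectedness of $\GGG^\circ$ in the ind-group sense: every $g\in\GGG^\circ$ lies in some $\GGG_k^\circ$, which is an irreducible (or at least connected) variety containing $e$ and $g$; if $\GGG_k^\circ$ is connected but reducible one passes to an irreducible curve through $e$ and $g$ inside it, which exists since connected algebraic groups are rational, in particular their points are connected by chains of rational curves. Countable index is immediate since $\GGG_k/\GGG_k^\circ$ is finite for each $k$ and $\GGG/\GGG^\circ = \varinjlim \GGG_k/\GGG_k^\circ$. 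Finally $\Lie\GGG = \Lie\GGG^\circ$ because $T_e\GGG_k = T_e\GGG_k^\circ$ for each $k$, the tangent space only seeing the component through $e$.

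For part (2), the direction ``$\GGG^\circ$ algebraic $\Rightarrow \dim\GGG<\infty$'' is clear: if $\GGG^\circ$ is algebraic it equals some $\GGG_{k_0}^\circ$ and then $\dim\GGG_k = \dim\GGG_k^\circ = \dim\GGG_{k_0}^\circ$ is constant for $k\geq k_0$ (each $\GGG_k^\circ$ being a connected algebraic group equal to $\GGG_{k_0}^\circ$), so $\dim\GGG<\infty$. Conversely, suppose $\dim\GGG = d < \infty$. Then each $\GGG_k^\circ$ is a connected algebraic group of dimension $\leq d$, and $\GGG_k^\circ \subset \GGG_{k+1}^\circ$ is a closed subgroup; an ascending chain of connected algebraic groups of bounded dimension must stabilize, since at each strict inclusion the dimension strictly increases (a proper closed subgroup of a connected algebraic group has strictly smaller dimension — the quotient is a positive-dimensional homogeneous space). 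Hence $\GGG_k^\circ = \GGG_{k_0}^\circ$ for $k\geq k_0$, so $\GGG^\circ = \GGG_{k_0}^\circ$ is algebraic.

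For part (3), ``$\dim\GGG<\infty \Rightarrow \dim\Lie\GGG<\infty$'' follows from (1) and (2): $\Lie\GGG = \Lie\GGG^\circ$ and $\GGG^\circ$ is an algebraic group, so its Lie algebra is finite dimensional, of dimension $\dim\GGG^\circ = \dim\GGG$. For the converse, suppose $\dim\Lie\GGG = \dim T_e\GGG < \infty$. Since $T_e\GGG = \varinjlim T_e\GGG_k^\circ$ is an increasing union of finite-dimensional spaces with bounded dimension, we have $T_e\GGG_k^\circ = T_e\GGG_{k_0}^\circ$ for all $k\geq k_0$ and some $k_0$. Then for $k\geq k_0$ the inclusion $\GGG_{k_0}^\circ \subset \GGG_k^\circ$ of connected algebraic groups induces an equality on tangent spaces at $e$, hence $\dim\GGG_{k_0}^\circ = \dim\GGG_k^\circ$, which forces $\GGG_{k_0}^\circ = \GGG_k^\circ$ (a connected algebraic subgroup of the same dimension as the ambient connected group is the whole group). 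Thus $\dim\GGG_k = \dim\GGG_k^\circ$ is eventually constant, and taking into account $k<k_0$ as well, $\dim\GGG<\infty$.

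The main obstacle I anticipate is part (1), specifically verifying cleanly that $\GGG^\circ$ is a \emph{closed} ind-subgroup and connected \emph{as an ind-group} — the bookkeeping of passing between the filtration indices and checking that morphisms out of connected varieties land in the correct components requires care, and the connectedness claim needs the fact that a connected algebraic group is covered by irreducible curves through any two of its points (equivalently, is rationally connected, indeed unirational). Parts (2) and (3) are then essentially formal consequences once one invokes the dimension-drop property of proper closed subgroups of connected algebraic groups.
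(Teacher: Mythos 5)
Your plan rests on a premise that is false in this setting: that the filtration pieces $\GGG_k$, and hence the components $\GGG_k^\circ$, are algebraic \emph{subgroups} of $\GGG$. In an ind-group the $\GGG_k$ are only closed subvarieties of one another — multiplication merely maps $\GGG_k\times\GGG_k$ into some larger $\GGG_m$ — and in the main example $\Aut(\An)$, filtered by degree, they are certainly not subgroups. This premise is load-bearing at three places. In (1) you get connectedness by saying that $\GGG_k^\circ$, being a connected algebraic group, is rational, so $e$ and $g$ lie on an irreducible curve inside it; for a mere connected, possibly reducible, variety this fails: two points on different irreducible components (e.g.\ of a union of two lines through a point) lie on no irreducible curve inside it. In (2) your stabilization argument applies ``a proper closed subgroup of a connected algebraic group has smaller dimension'' to the chain $\GGG_k^\circ$; without the group property, a strictly increasing chain of connected closed subvarieties of bounded dimension need not stabilize (an increasing union of lines through the origin in $\CC^2$ is such a chain). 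In (3)$\Leftarrow$ the step ``$T_e\GGG_{k_0}^\circ=T_e\GGG_k^\circ$ forces $\GGG_{k_0}^\circ=\GGG_k^\circ$'' uses smoothness and the subgroup property of $\GGG_k^\circ$, neither of which you have; moreover the repeated claim $\dim\GGG_k=\dim\GGG_k^\circ$ is unjustified, since $\dim\GGG$ is defined through \emph{all} components of $\GGG_k$, not only the one through $e$. (Your verification that $\GGG^\circ$ is a normal subgroup via images of connected sets, and $\Lie\GGG=\Lie\GGG^\circ$, are fine; closedness of $\GGG^\circ$ in $\GGG$, which you flag but do not prove, also still needs an argument, since an increasing union of closed sets need not be closed.)

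The repair is to exploit the group structure of the ambient $\GGG$, which is how the paper proceeds. For $h\in C_0\cap C_1$, the morphism $(g_0,g_1)\mapsto g_0h^{-1}g_1$ on $C_0\times C_1$ shows that two meeting irreducible components of $\GGG_k^\circ$ are absorbed into a single irreducible component of some $\GGG_{k'}^\circ$; this yields connectedness of $\GGG^\circ$, and, once $\dim\GGG_k^\circ$ is eventually constant, it forces $\GGG_k^\circ$ to be irreducible and the chain to stabilize, so $\GGG^\circ=\GGG_{k_0}^\circ$ is algebraic — this replaces your chain argument in (2). Closedness follows because a connected component $C$ of $\GGG_k$ meeting $\GGG^\circ$ satisfies $C\subset\GGG_{k'}^\circ$ for some $k'$, so $\GGG^\circ\cap\GGG_k$ is a union of components of $\GGG_k$. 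Finally, translations give both countable index ($g_i^{-1}C$ is connected, contains $e$, hence lies in $\GGG^\circ$) and the converse in (3) directly: $T_g\GGG_k\simeq T_e\bigl(g^{-1}\GGG_k\bigr)\subset\Lie\GGG$, so $\dim\GGG_k\leq\dim\Lie\GGG$, with no appeal to stabilization of the $T_e\GGG_k^\circ$.
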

\begin{proof}
(1) For $g\in \GGG$ and any $k$ the subset $g \GGG_{k}^{\circ} g^{-1}$ is closed, connected, contains $e$, and is contained in some $\GGG_{k'}$. 
Hence $g \,\GGG_{k}^{\circ} \,g^{-1} \subset \GGG_{k'}^{\circ}\subset\GGG^{\circ}$ which shows that $\GGG^{\circ}$ is normal. If $\GGG^{\circ}$ meets a connected component $C$ of some $\GGG_{k}$, then there is a $k' \geq k$ such that $\GGG_{k'}^{\circ}\cap C \neq\emptyset$, hence $C \subset \GGG_{k'}^{\circ}$. Thus $\GGG^{\circ}\cap\GGG_{k}=\GGG_{k''}^{\circ}\cap\GGG_{k}$ for large $k''\geq k$ and so $\GGG^{\circ}$ is closed. 

To see that $\GGG^{\circ}$ is connected it suffices to show the following. If $C_{0},C_{1}\subset \GGG_{k}^{\circ}$ are irreducible components with $C_{0}\cap C_{1}\neq \emptyset$, then there is a $k'\geq k$ and an irreducible component $C$ of $\GGG_{k'}^{\circ}$ which contains $C_{0}\cup C_{1}$. For this, we choose an $h\in C_{0}\cap C_{1}$ and define the morphism $\mu\colon C_{0}\times C_{1}\to \GGG$ by $(g_{0},g_{1})\mapsto g_{0}h^{-1}g_{1}$. Then the image $Y$ of $\mu$ contains $C_{0}$ and $C_{1}$, and the closure $\bar Y$ is irreducible. Hence, $\bar Y$ is contained in an irreducible component $C$ of $\GGG_{k'}$, and $C \subset \GGG_{k'}^{\circ}$, because $C_{0},C_{1}\subset \GGG_{k}^{\circ}\subset \GGG_{k'}^{\circ}$.

For the last claim we choose elements $g_{1},\ldots,g_{m}\in\GGG_{k}$, one  from each connected component. Then $\GGG_{k}\subset g_{1}\GGG^{\circ}\cup g_{2}\GGG^{\circ}\cup\cdots\cup g_{m}\GGG^{\circ}$ which implies that $\GGG/\GGG^{\circ}$ is countable.

\par\smallskip
(2) Assume that  $\dim \GGG <\infty$.  Then there is a $k_{0}$ such that $\dim\GGG_{k}^{\circ} = \dim\GGG_{k_{0}}^{\circ}$ for all $k\geq k_{0}$. Since $\GGG_{k}^{\circ}$ is contained in an irreducible component $C$ of $\GGG_{k'}^{\circ}$ for some $k'>k$, it follows that $\GGG_{k}^{\circ} = C$ in case $k\geq k_{0}$. Thus $\bigcup_{k}\GGG_{k}^{\circ}= \GGG_{k_{0}}^{\circ}$, and this is a closed irreducible algebraic subset, hence an algebraic group. The reverse implication is clear.

\par\smallskip
(3) If $\dim\GGG < \infty$, then $\GGG^{\circ}$ is an algebraic group, by (2). Since $\Lie\GGG = \Lie\GGG^{\circ}$ by (1) we see that $\Lie\GGG$ is finite dimensional. 
If $\dim\Lie\GGG<\infty$, then $\dim T_{g}\GGG < \infty$, because $T_{g}\GGG_{k} \simeq T_{e}g^{-1}\GGG_{k}\subset \Lie\GGG$. Thus $\dim\GGG_{k}$ is bounded by $\dim\Lie\GGG$.       
\end{proof}

\begin{exa}\lab{torus.exa}
\be
\item 
We have $\Aut(\Cst) \simeq \ZZ/2 \ltimes \Cst$, hence $\Aut(\Cst)^{\circ}\simeq \Cst$. Similarly, $\Aut(\Cst^{n}) \simeq \GL_{n}(\ZZ)\ltimes \Cst^{n}$, and so $\Aut(\Cst^{n})^{\circ}\simeq \Cst^{n}$.
\item
Let $C$ be a smooth curve with trivial automorphism group, and consider the one dimensional variety $Y_{C}=\Aone\cup C$ where the two irreducible components meet in $\{0\}\in\Aone$. Then $\Aut(Y_{C}) \simeq \Cst$. Moreover, the disjoint union $Y_{C_{1}}\cup Y_{C_{2}}\cup\cdots\cup Y_{C_{m}}$ with pairwise non-isomorphic curves $C_{i}$ has automorphism group $\Cst^{m}$. We do not know if there is an irreducible variety whose automorphism group is a given torus.
\ee
\end{exa}

The proof of Theorem~\reff{thm2} follows immediately from the next result.

\begin{prop} Let $X$ be a connected affine variety. If $X$ admits a nontrivial action of the additive group $\Cplus$, then either $X\simeq \Aone$ or $\dim\Aut(X) = \infty$.
\end{prop}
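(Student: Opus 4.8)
The plan is to prove the contrapositive: assuming $\dim\Aut(X)<\infty$ and $X\not\simeq\Aone$, we show that $X$ admits no nontrivial $\Cplus$-action; equivalently, a nontrivial $\Cplus$-action $\lambda\colon\Cplus\to\Aut(X)$, with associated nonzero locally nilpotent vector field $\delta\in\VF(X)=\Der(\OOO(X))$, forces $\dim\Aut(X)=\infty$. The engine is the modification construction: for every invariant $f\in\OOO(X)^{\Cplus}=\ker\delta$ the vector field $f\delta$ is again locally nilpotent, hence lies in $\Lie\Aut(X)$ (it is the velocity at $0$ of the $\Cplus$-action $f\cdot\lambda$, and $\Lie\bigl((f\cdot\lambda)(\Cplus)\bigr)=\CC\,f\delta$). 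Thus $\OOO(X)^{\Cplus}\cdot\delta\subseteq\Lie\Aut(X)$, and by Lemma~\ref{connectedcomp.lem}(3) it suffices to prove that the $\CC$-vector space $\OOO(X)^{\Cplus}\cdot\delta$ is infinite-dimensional whenever $X$ is connected, $X\not\simeq\Aone$, and the action is nontrivial.

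The first step is to dispose of the one-dimensional pieces. Each irreducible component of $X$ is $\Cplus$-stable, since a connected group cannot permute components. Suppose some component $X_{1}$ on which the action is nontrivial has dimension $1$. On the domain $\OOO(X_{1})$ choose a local slice of $\delta$, i.e. $a\in\ker\delta\setminus\{0\}$ with $a=\delta r$ for some $r$; the slice theorem gives $\OOO(X_{1})_{a}=(\OOO(X_{1})^{\Cplus})_{a}[\,r/a\,]$, so $(\OOO(X_{1})^{\Cplus})_{a}$ has Krull dimension $\dim X_{1}-1=0$, and being a domain algebraic over $\CC$ it equals $\CC$; hence $\OOO(X_{1})^{\Cplus}=\CC$. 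A nontrivial $\Cplus$-action on an affine curve with no nonconstant invariants has a dense orbit $\simeq\Cplus=\Aone$; passing to the normalization $\widetilde{X_{1}}$ (which separates components and carries the lifted action) we get $\widetilde{X_{1}}\simeq\Aone$ with translation action, and since the non-normal locus of $X_{1}$ is a finite $\Cplus$-stable subset of $\Aone$ it must be empty, so $X_{1}\simeq\Aone$ with the fixed-point-free translation action. Then for every other component $X_{j}$ the set $X_{1}\cap X_{j}$ is a finite $\Cplus$-stable subset of $X_{1}$, hence empty; so $X_{1}$ is open and closed in $X$, and connectedness forces $X=X_{1}\simeq\Aone$, a contradiction. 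Hence every component on which $\Cplus$ acts nontrivially has dimension $\geq2$.

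Now the main computation. Pick a local slice $r\in\OOO(X)$ of $\delta$, that is $a:=\delta r\in\ker\delta$ with $a\neq0$ (take $r=\delta^{k-1}g$ where $\delta g\neq0$ and $k\geq1$ is maximal with $\delta^{k}g\neq0$). Since $\delta$ vanishes identically on every component on which $\lambda$ is trivial, so does $a$; as $a\neq0$, it is nonzero on at least one component of the union $Z$ of the components with nontrivial action, and that component has dimension $\geq2$ by the previous step, so $\dim X_{a}\geq2$. The slice theorem gives $\OOO(X)_{a}=(\OOO(X)^{\Cplus})_{a}[\,r/a\,]$, so $(\OOO(X)^{\Cplus})_{a}$ has Krull dimension $\dim X_{a}-1\geq1$ and in particular is infinite-dimensional over $\CC$. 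Finally, write $\OOO(X)^{\Cplus}\cdot\delta\simeq\OOO(X)^{\Cplus}/J$ with $J=\{f\in\OOO(X)^{\Cplus}\mid f\delta=0\}=\OOO(X)^{\Cplus}\cap\mathrm{Ann}_{\OOO(X)}(\delta\OOO(X))$, an ideal of $\OOO(X)^{\Cplus}$. Localizing at $a$ kills $J$: on $X_{a}$ the derivation $\delta$ admits the slice $s=r/a$, so $1\in\delta\OOO(X)_{a}$ has zero annihilator, whence $J_{a}=0$ and $(\OOO(X)^{\Cplus}\cdot\delta)_{a}\simeq(\OOO(X)^{\Cplus})_{a}$ is infinite-dimensional. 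Since a finite-dimensional $\CC$-algebra has finite-dimensional localizations, $\OOO(X)^{\Cplus}\cdot\delta$ is infinite-dimensional, hence so is $\Lie\Aut(X)$, and Lemma~\ref{connectedcomp.lem}(3) gives $\dim\Aut(X)=\infty$.

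The step I expect to be the real obstacle is the first one, the low-dimensional and reducible geometry: one must know that a nontrivial $\Cplus$-action on an affine curve has a dense orbit and that such a curve is $\Aone$ (this needs the normalization argument together with the classification of $\Cplus$-actions on $\Aone$), and one must rule out that a component $X_{1}\simeq\Aone$ with nontrivial action is glued to the rest of $X$ (this uses fixed-point freeness of the translation action). Everything afterward is a rather mechanical application of the slice theorem for locally nilpotent derivations and of the modification construction; the one point requiring care is that it is the \emph{module} $\OOO(X)^{\Cplus}\cdot\delta$, not merely the ring $\OOO(X)^{\Cplus}$, that must be shown infinite-dimensional — which is exactly why one localizes at the slice denominator $a$, where $\mathrm{Ann}(\delta)$ dies.
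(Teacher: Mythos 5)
Your proof is correct, and at the top level it follows the same strategy as the paper: both arguments dispose first of a one--dimensional component with nontrivial action (forcing $X\simeq\Aone$), and in the remaining case use the modifications $f\cdot\lambda$, $f\in\OOO(X)^{\Cplus}$, to exhibit an infinite--dimensional piece of $\Aut(X)$, concluding with Lemma~\ref{connectedcomp.lem}. The difference lies in how the key infinite--dimensionality is verified. The paper works on a single irreducible component $X_{j}$ of dimension $\geq 2$ with nontrivial action: it picks a nonzero invariant $f$ vanishing on all intersections $X_{j}\cap X_{k}$, so that $X_{f}\subset X_{j}$ and $\OOO(X)^{U}_{f}=\OOO(X_{f})^{U}=\OOO(X_{j})^{U}_{f}$, whence the image of $\OOO(X)^{U}$ in $\OOO(X_{j})$, and with it the family of modifications, is infinite--dimensional; the fact that only functions vanishing on the components with nontrivial action give the trivial modification is what makes this restriction argument suffice. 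You instead localize at $a=\delta r$ and invoke the slice theorem $\OOO(X)_{a}=(\OOO(X)^{\Cplus})_{a}[r/a]$, which buys two things at once: the infinite--dimensionality of $(\OOO(X)^{\Cplus})_{a}$ (since $X_{a}$ meets a component of dimension $\geq 2$ with nontrivial action), and the vanishing of the annihilator ideal $J$ after localization, so that the module $\OOO(X)^{\Cplus}\cdot\delta$ itself, not merely the invariant ring, is seen to be infinite--dimensional --- the point the paper handles via the triviality criterion for modifications. Your curve case (constant invariants, normalization, fixed--point--freeness ruling out gluing to other components) is a more detailed version of the paper's one--line argument that $X_{i}$ is a closed $\Cplus$-orbit. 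One small imprecision, not a gap: the asserted equality of Krull dimensions, $\dim(\OOO(X)^{\Cplus})_{a}=\dim X_{a}-1$, presupposes Noetherianity, which invariant rings of $\Cplus$-actions may fail to have; the conclusion you need survives, because if $(\OOO(X)^{\Cplus})_{a}$ were finite--dimensional it would be Artinian and then $(\OOO(X)^{\Cplus})_{a}[r/a]=\OOO(X)_{a}$ would have Krull dimension $1<\dim X_{a}$.
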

\begin{proof} If $X$ contains a one-dimensional irreducible component $X_{i}$ with a nontrivial action of $\Cplus$, then $X_{i}$ is an orbit under $\Cplus$, hence $X = X_{i} \simeq \Aone$. Otherwise, $\Cplus$ acts non-trivially on an irreducible component $X_{j}$ of dimension $\geq 2$. Denote by $U \subset \Aut(X)$ the image of $\Cplus$. We claim that the unipotent subgroup $\OOO(X)^{U}\cdot U \subset \Aut(X)$ is infinite dimensional. This follows if we show that the image of $\OOO(X)^{U}$ in $\OOO(X_{j})$ is infinite dimensional. For that we first remark that there is a nonzero $U$-invariant $f$ which vanishes on all $X_{j}\cap X_{k}$ for $k\neq j$, because the vanishing ideal is $U$-stable. This implies that $X_{f}\subset X_{j}$, and so
$$
\OOO(X)^{U}_{f} = \OOO(X_{f})^{U} = \OOO(X_{j})^{U}_{f} = (\OOO(X)^{U}|_{X_{j}})_{f}.
$$
Thus the image $\OOO(X)^{U}|_{X_{j}} \subset \OOO(X_{j})$ is infinite dimensional.
\end{proof}

\par\bigskip
\section{Automorphism groups of curves}
In this last section we prove Theorem~\ref{thm4} which claims that every finite group appear as the automorphism group of a smooth affine curve. 
\begin{lem}\lab{lem3}
Let $C$ be a smooth affine curve which is neither rational nor elliptic, and let $G \subset \Aut(C)$ be a subgroup. Then there is an open subset $C' \subset C$ such that $\Aut(C') = G$.
\end{lem}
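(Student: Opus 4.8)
The idea is that for a smooth affine curve $C$ which is neither rational nor elliptic, the automorphism group $\Aut(C)$ is finite, and — after passing to the smooth projective model $\bar C$ — every automorphism of $C$ extends to $\bar C$, so $G$ acts on $\bar C$ as a finite group of automorphisms. The first step is therefore to fix $\bar C$, to identify $\Aut(C)$ with the (finite) subgroup of $\Aut(\bar C)$ stabilizing the finite set $S:=\bar C\setminus C$ of points at infinity, and to regard $G\subset\Aut(\bar C)$ as a finite group preserving $S$. What I want to produce is a finite $G$-stable subset $P\subset C$, disjoint from $S$, such that the only automorphisms of $\bar C$ fixing $S\cup P$ setwise are precisely the elements of $G$; then $C':=C\setminus P=\bar C\setminus(S\cup P)$ has $\Aut(C')=G$, since $\Aut(C')$ is again the stabilizer of $S\cup P$ in $\Aut(\bar C)$ and it contains $G$ by construction.

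The key input is that $\Aut(\bar C)$ is finite (genus $\geq 2$; by hypothesis $C$ is not rational or elliptic and, being affine, it is not projective, so $\bar C$ has genus $\geq 1$, and the elliptic case is excluded — here one must note that $C$ elliptic would mean $\bar C$ elliptic, which is the excluded case, so indeed $g(\bar C)\geq 2$ and $\Aut(\bar C)$ is finite by Hurwitz). List the finitely many nontrivial cosets: for each $\sigma\in\Aut(\bar C)\setminus G$, I need a point $p\in C\setminus S$ such that the $G$-orbit of $p$ is not stable under the subgroup generated by $G$ and $\sigma$ — more simply, such that $\sigma$ does not stabilize $G\cdot p\cup S$. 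Since $\sigma\notin G$, the automorphism $\sigma$ is not the identity and acts nontrivially on $\bar C$; as $\bar C$ is a curve, $\sigma$ has only finitely many fixed points, and more relevantly, for all but finitely many $p$ the finite set $G\cdot p$ together with $S$ fails to be $\sigma$-stable. Choosing such a $p_\sigma\in C\setminus S$ for each of the finitely many $\sigma$, and letting $P$ be the union of the finite orbits $G\cdot p_\sigma$, gives a finite $G$-stable subset of $C$ whose union with $S$ is stabilized by no $\sigma\notin G$. Then $\Aut(C')=\Stab_{\Aut(\bar C)}(S\cup P)=G$.

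The main obstacle is the counting/genericity step: showing that for a fixed $\sigma\notin G$ one can choose a point $p$ with $\sigma(G\cdot p\cup S)\neq G\cdot p\cup S$. One has to rule out the possibility that $\sigma$ permutes \emph{every} set of the form $G\cdot p\cup S$; this is where finiteness of $\Aut(\bar C)$ and the fact that $G\cdot p$ has bounded size (at most $|G|$) are used: if $\sigma$ stabilized $G\cdot p\cup S$ for all $p\in C\setminus S$, then $\langle G,\sigma\rangle$ would act on $C$ with all orbits of size $\leq |G|+|S|$, and letting $p$ range one sees $\sigma$ must normalize the $G$-action in a way forcing $\sigma\in G$ — a contradiction. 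Care is also needed to ensure $P\cap S=\emptyset$ and that removing $P$ does not accidentally create new automorphisms, but this is automatic because $\Aut(\bar C\setminus T)$ for any finite $T$ is always the stabilizer of $T$ in the finite group $\Aut(\bar C)$, which only shrinks as $T$ grows.
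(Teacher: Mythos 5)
Your overall strategy is the paper's: pass to the smooth projective model $\bar C$, use that $\Aut(\bar C)$ is finite (genus $\geq 2$), identify the automorphism group of any open subset of $C$ with the stabilizer in $\Aut(\bar C)$ of the removed finite set, and delete a suitable finite $G$-stable set of points of $C$. Your per-coset genericity claim is also correct and can be made precise: for $\sigma\notin G$ and any $p$ outside the finite set $\sigma^{-1}(S)\cup\bigcup_{g\in G}\mathrm{Fix}(g^{-1}\sigma)$ one has $\sigma(p)\notin G\cdot p\cup S$, so $G\cdot p\cup S$ is not $\sigma$-stable; your ``bounded orbit'' argument is vaguer than this but is repaired by exactly this pigeonhole/identity-theorem observation.

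The genuine gap is in the assembly step. From ``$\sigma$ does not stabilize $S\cup G\cdot p_{\sigma}$'' for each $\sigma$ you conclude that no $\sigma\notin G$ stabilizes $S\cup P$, where $P=\bigcup_{\tau}G\cdot p_{\tau}$; this does not follow. Stabilizing the larger set is a strictly weaker condition: $\sigma$ may send points of $G\cdot p_{\sigma}$ into another orbit $G\cdot p_{\tau}$ or into $S$ and still permute $S\cup P$, and nothing in your choices excludes, say, $p_{\tau}\in\Aut(\bar C)\cdot p_{\sigma}$ or $p_{\sigma}\in\Aut(\bar C)\cdot S$. To close the argument you must add genericity conditions (e.g. all $p_{\sigma}$ with trivial stabilizer in $\Aut(\bar C)$, lying in pairwise distinct $\Aut(\bar C)$-orbits and outside the finite set $\Aut(\bar C)\cdot S$), or, as the paper does, avoid the issue entirely by removing a single orbit: choose one point $c\in C$ with trivial stabilizer in $\Aut(\bar C)$ (and off $\Aut(\bar C)\cdot S$) and set $C'=C\setminus G\cdot c$. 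Then the extension $\bar\phi\in\Aut(\bar C)$ of any automorphism of $C'$ must send $c$ to some $g(c)$ with $g\in G$, so $g^{-1}\bar\phi$ fixes $c$ and hence is the identity, giving $\bar\phi=g\in G$. So the route is the same, but your specific construction of $P$ needs this repair before the last sentence of your argument is valid.
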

\begin{proof}
The curve $C$ is contained as an open set in a smooth projective curve $\bar C$. Since every automorphism $C$ extends to $\bar C$ we have $G \subset \Aut(\bar C)$. By assumption $\bar C$ has genus $>1$ and so $\Aut(\bar C)$ is finite. It follows that there is a point $c \in C$ with trivial stabilizer in $\Aut(\bar C)$. Removing the $G$-orbit $G \cdot c$ we get $\Aut(C \setminus G\cdot c) = G$. In fact, $G \subseteq \Aut(C \setminus G\cdot c)$, and we have equality, because every automorphism of $C \setminus G\cdot c$ extends to $\bar C$.
\end{proof}

The lemma shows that we can prove Theorem~\reff{thm4} by constructing, for every large $n$, a smooth affine curve $C_{n}$ such that the symmetric group $\Sn$ appears as a subgroup of $\Aut(C_{n})$. The following construction was suggested by \name{Jean-Philippe Furter}. We start with the standard action of $\Sn$ on $\Cn$ and consider the morphism
$$
\phi:=(s_{1},s_{2},\ldots,s_{n-1})\colon \Cn \to \CC^{n-1}
$$
where $s_{j}\in\CC[x_{1},\ldots,x_{n}]^{\Sn}$ is the $j$th elementary symmetric function.
The sequence $s_{1},s_{2},\ldots,s_{n}$ is a homogeneous system of parameters, and so the fibers of the morphism $\phi$
are complete intersections of dimension 1 which are stable under the action of $\Sn$. 
Note that for the hyperplane $H:=\VVV(x_{n})$ the induced morphism  $\phi_H:=\phi|_{H}\colon H\to \CC^{n-1}$ is the quotient morphism under the action of $\SSS_{n-1}$.

\begin{prop}\lab{genfiberirred.prop}
The general fiber of $\phi$ is irreducible, i.e. there is a dense open set $U\subset\CC^{n-1}$ such that $\phi^{-1}(u)$ is an irreducible curve for all $u\in U$.
\end{prop}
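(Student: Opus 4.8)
The plan is to reduce the irreducibility of a general fibre of $\phi$ to a statement about the Galois group of a one–variable polynomial, and then to invoke the classical description of the monodromy of a branched covering $\Aone\to\Aone$. Fix $u=(a_{1},\ldots,a_{n-1})\in\CC^{n-1}$, put $g_{u}(t):=t^{n}-a_{1}t^{n-1}+\cdots+(-1)^{n-1}a_{n-1}t\in\CC[t]$, and set $P_{u}(t):=g_{u}(t)+(-1)^{n}s_{n}\in\CC(s_{n})[t]$. Since a multiple zero of $P_{u}$ would be a zero of $P_{u}'=g_{u}'$, the polynomial $P_{u}$ is inseparable over $\CC(s_{n})$ only for the finitely many values $s_{n}=(-1)^{n+1}g_{u}(\beta)$ with $g_{u}'(\beta)=0$; hence $P_{u}$ is separable over $\CC(s_{n})$.

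The first step is to show that $\phi^{-1}(u)$ has exactly $[\Sn:G_{u}]$ irreducible components, where $G_{u}:=\mathrm{Gal}\bigl(P_{u}/\CC(s_{n})\bigr)\subset\Sn$; in particular $\phi^{-1}(u)$ is irreducible exactly when $G_{u}=\Sn$. Identifying $s_{n}$ with $s_{n}(x_{1},\ldots,x_{n})$, the coordinate ring $\OOO(\phi^{-1}(u))=\CC[x_{1},\ldots,x_{n}]/(s_{1}-a_{1},\ldots,s_{n-1}-a_{n-1})$ is the universal splitting algebra of $P_{u}$ over $\CC[s_{n}]$. As $\phi^{-1}(u)$ is a complete intersection of dimension $1$, this ring is pure-dimensional, so every minimal prime contracts to $(0)$ in $\CC[s_{n}]$ and survives in $\OOO(\phi^{-1}(u))\otimes_{\CC[s_{n}]}\CC(s_{n})$, which is the universal splitting algebra of $P_{u}$ over the field $\CC(s_{n})$; for a separable degree-$n$ polynomial over a field the latter is a product of $[\Sn:G_{u}]$ copies of the splitting field, giving the count.

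It remains to prove $G_{u}=\Sn$ for general $u$. With $w:=(-1)^{n+1}s_{n}$ one has $\CC(w)=\CC(s_{n})$ and $P_{u}(t)=g_{u}(t)-w$, so $G_{u}$ is the monodromy group of the degree-$n$ morphism $g_{u}\colon\Aone\to\Aone$. It is transitive for every $u$, since $\deg g_{u}=n$ makes $g_{u}(t)-w$ the minimal polynomial of $t$ over $\CC(g_{u}(t))\cong\CC(w)$, i.e.\ $P_{u}$ is irreducible over $\CC(s_{n})$. Moreover the condition that $g_{u}$ have $n-1$ distinct critical values at $n-1$ simple critical points is Zariski-open in $u$ and nonempty --- e.g.\ $u=(0,\ldots,0,\pm1)$ gives $g_{u}(t)=t^{n}+(-1)^{n-1}t$, whose derivative $nt^{n-1}+(-1)^{n-1}$ has $n-1$ distinct simple nonzero zeros and $n-1$ pairwise distinct critical values. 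For such $u$ the local monodromy around each critical value is a transposition, so $G_{u}$ is a transitive subgroup of $\Sn$ generated by $n-1$ transpositions, hence $G_{u}=\Sn$. Thus $G_{u}=\Sn$ on a dense open subset of $\CC^{n-1}$, and $\phi^{-1}(u)$ is an irreducible curve for general $u$.

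The one non-algebraic ingredient is the classical fact used in the last step --- over a simple critical value of $g_{u}\colon\Aone\to\Aone$ exactly two sheets come together, contributing a transposition --- together with verifying that the quoted structural results (pure-dimensionality of a complete intersection, the shape of a universal splitting algebra) apply as stated; that is where I would be most careful, the rest being routine. A purely algebraic variant would instead show directly that $\CC(s_{1},\ldots,s_{n-1})$ is algebraically closed in $L:=\CC(x_{1},\ldots,x_{n})$: writing $\tilde K$ for its algebraic closure in $L$, one finds $\tilde K(s_{n})=L^{H}$ for a normal subgroup $H\trianglelefteq\Sn=\mathrm{Gal}(L/\CC(s_{1},\ldots,s_{n}))$, and since $L^{H}=\tilde K\otimes_{\CC(s_{1},\ldots,s_{n-1})}\CC(s_{1},\ldots,s_{n})$ the covering $\CC^{n}/H\to\CC^{n}/\Sn$ is a base change along $\CC^{n}\to\CC^{n-1}$, hence unramified over the (irreducible, non-vertical) discriminant hypersurface; the inertia transposition of $\CC^{n}\to\CC^{n}/\Sn$ along it therefore lies in $H$, forcing $H=\Sn$.
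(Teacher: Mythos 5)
Your proof is correct, and it takes a genuinely different route from the paper. The paper argues at the generic fibre: it reduces (via EGA IV, 9.7.8) to showing that $\CC(s_{1},\ldots,s_{n-1})$ is algebraically closed in $\CC(x_{1},\ldots,x_{n})$, and proves this by a clever $\Sn$-equivariant argument with the integral closure $A$ of $\CC[s_{1},\ldots,s_{n-1}]$ in $\Cxn$: restriction to the hyperplane $H=\VVV(x_{n})$ forces $A^{\SSS_{n-1}}=A^{\Sn}$, hence a non-faithful $\Sn$-action on $Y=\Spec A$, and the remaining case $Y=\CC^{n}/\AAn$ is excluded because the putative semi-invariant $f=d\cdot h$ would vanish on the hyperplanes $H_{ij}$ whose images under $\phi$ are dense (Vandermonde computation); this argument is written only for $n\geq 5$, which suffices for the application. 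You instead work with a fixed general closed fibre: you identify $\OOO(\phi^{-1}(u))$ with the universal splitting algebra of $P_{u}$ over $\CC[s_{n}]$ (this algebra is free of rank $n!$ over $\CC[s_{n}]$, which also gives the finiteness needed to see that every $1$-dimensional component dominates the $s_{n}$-line), count components by $[\Sn:G_{u}]$ via the \'etale structure of the splitting algebra of a separable polynomial over a field, and then compute $G_{u}=\Sn$ for a Morse-type $g_{u}$ from transitivity plus the classical transposition monodromy at simple critical values with distinct critical values. What each approach buys: yours works uniformly for all $n\geq 2$, avoids EGA's generic geometric irreducibility and the $n\geq 5$ restriction, and even identifies the exact number of components in terms of the Galois group, at the cost of a transcendental input (Riemann-existence--type identification of the Galois group with topological monodromy) and the splitting-algebra formalism; the paper's proof stays within algebraic invariant theory of the $\Sn$-action but is tied to $n\geq 5$ and to the generic-fibre formulation. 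Your concluding ``purely algebraic variant'' (inertia over the discriminant forcing $H=\Sn$) is closer in spirit to the paper's strategy but is only sketched; as written, your main monodromy argument is the one that carries the proof, and its remaining verifications (openness of the Morse condition, separability, the structure of the splitting algebra) are routine as you indicate.
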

\begin{proof}
We will give the proof for $n\geq 5$ which is sufficient for our application.
By \cite[Proposition~9.7.8]{Gr1966Elements-de-geomet} we have to show that the generic fiber is geometrically irreducible, or equivalently that $\CC(s_{1},\ldots,s_{n-1})$ is algebraically closed in $\CC(x_{1},\ldots,x_{n})$.
Consider the integral closure $A$ of $\CC[s_{1},\ldots,s_{n-1}]$ in $\Cxn$, and denote by $\eta\colon Y \to \CC^{n-1}$ the corresponding finite morphism. Clearly, we have a factorization $\phi\colon \Cn \overset{\bar\phi}{\to} Y \overset{\eta}{\to} \CC^{n-1}$. Moreover, $A$ is stable under the action of $\Sn$, $A^{\Sn}=\CC[s_{1},\ldots,s_{n-1}]$, and thus $\eta$ is the quotient under the action of $\Sn$ on $Y$. 
Restricting $\bar\phi$ to $H$ we get a similar factorization $\phi_H\colon H\overset{\bar\phi_H}{\to} Y \overset{\eta}{\to} \CC^{n-1}$ with a finite $\SSS_{n-1}$-equivariant morphism $\bar\phi_H$. 
\begin{diagram}
\Cn &&&&\\
\uInto(0,4)&\rdTo_{\bar\phi}\rdTo(4,2)^{\phi}\\
&& Y & \rTo^{\!\!\!\!\!\!\eta} & \CC^{n-1}\\
&\ruTo(2,2)^{\bar\phi_H}&&\ruTo(4,2)_{\phi_H}\\
H\\
\end{diagram}
\par\smallskip\noindent
It follows that  $A^{\SSS_{n-1}}=\CC[s_{1},\ldots,s_{n-1}]=A^{\Sn}$ which implies that the action of $\Sn$ on $Y$ cannot be faithful. Assuming  $n\geq 5$ we deduce that the alternating group $\AAn$ acts trivially on $Y$. 

If $\Sn$ acts trivially, then $Y \simto \CC^{n-1}$ and we are done. Otherwise, $Y = \CC^{n-1}/\AAA_{n-1}$ and we get a decomposition $A = \CC[s_{1},\ldots,s_{n-1}]\oplus \CC[s_{1},\ldots,s_{n-1}]f$ where $f$ is a $\SSS_{n-1}$-semi-invariant and $f^{2}\in\CC[s_{1},\ldots,s_{n-1}]$. 

But $A$ is stable under $\Sn$, and so $f$ is also an $\Sn$-semi-invariant, i.e. $f = d\cdot h$ where $d = \prod_{i<j}(x_{i}-x_{j})$ and $h \in \Cxn^{\Sn}$. Clearly, $f$ vanishes on the hyperplanes $H_{ij}:=\VVV(x_{i}-x_{j})$. On the other hand, we claim that the images $\phi(H_{ij})$ are dense in $\CC^{n-1}$. Hence $f^{2}=0$ on $\CC^{n-1}$, and so $f=0$, a contradiction. 

In order to prove the claim we remark that $\CC[s_{1},\ldots,s_{n-1}] = \CC[p_{1},\ldots,p_{n-1}]$ where $p_{j}:=x_{1}^{j}+\cdots+x_{n}^{j}$. Using these functions to define $\phi\colon\Cn \to \CC^{n-1}$ we see that the Jacobian matrix is given by
$$
\Jac(\phi)=\begin{bmatrix}
1 & 1 & \cdots & 1\\
x_{1} & x_{2} & \cdots & x_{n}\\
x_{1}^{2} & x_{2}^{2} & \cdots & x_{n}^{2}\\
\vdots& \vdots && \vdots \\
x_{1}^{n-2} & x_{2}^{n-2} & \cdots & x_{n}^{n-2}
\end{bmatrix}.
$$
Every $(n-1)\times(n-1)$-minor is a \name{Vandermonde} determinant, and so $\Jac(\phi)$ has rank $n-1$ in all points $a=(a_{1},\ldots,a_{n})$ where at most two coordinates are equal. Hence $\Jac(\phi)$ has maximal rank on a dense open set of every hyperplane $H_{ij}$, and so 
$\phi|_{H_{ij}}$ is dominant.
\end{proof}

\begin{proof}[Proof of Theorem~\ref{thm4}]
We can embed every finite group $G$ into some $\Sn$ where we can assume that $n\geq 5$.
By Proposition~\reff{genfiberirred.prop} the general fiber of $\phi\colon \Cn \to \CC^{n-1}$ is an irreducible curve $C$ with a faithful action of $\Sn$. This action lifts to a faithful action on the normalization $\tilde C$. Since $n\geq 5$, $\tilde C$ is neither rational nor elliptic, hence the claim follows from Lemma~\reff{lem3}.
\end{proof}

\bigskip

\begin{thebibliography}{Kum02}
\bibitem[AFK13]{ArFlKa2013Flexible-varieties}
I.~Arzhantsev, H.~Flenner, S.~Kaliman, F.~Kutzschebauch, and M.~Zaidenberg,
  \emph{Flexible varieties and automorphism groups}, Duke Math. J. \textbf{162}
  (2013), no.~4, 767--823.

\bibitem[FK14]{FuKr2014On-the-geometry-of}
Jean-Philippe Furter and Hanspeter Kraft, \emph{On the geometry of the
  automorphism group of affine $n$-space}, in preparation, 2014.

\bibitem[Gro66]{Gr1966Elements-de-geomet}
A.~Grothendieck, \emph{\'{E}l\'ements de g\'eom\'etrie alg\'ebrique. {IV}.
  \'{E}tude locale des sch\'emas et des morphismes de sch\'emas {IV}}, Inst.
  Hautes \'Etudes Sci. Publ. Math. \textbf{28} (1966).

\bibitem[Igu73]{Ig1973Geometry-of-absolu}
Jun-ichi Igusa, \emph{Geometry of absolutely admissible representations},
  Number theory, algebraic geometry and commutative algebra, in honor of
  {Y}asuo {A}kizuki, Kinokuniya, Tokyo, 1973, pp.~373--452. 
  
\bibitem[Kra84]{Kr1984Geometrische-Metho}
Hanspeter Kraft, \emph{Geometrische {M}ethoden in der {I}nvariantentheorie},
  Aspects of Mathematics, D1, Friedr. Vieweg \& Sohn, Braunschweig, 1984.
  

\bibitem[Kra14]{Kr2014Algebraic-Transfor}
Hanspeter Kraft, \emph{Algebraic {T}ransformation {G}roups: {A}n
  {I}ntroduction}, {\tt http://www.math.unibas.ch/kraft}, Mathematisches
  Institut, Universit\"at Basel, preliminary version from July 2014.

\bibitem[KD14]{KrDu2014Invariants-and-Sep}
Hanspeter Kraft and Emilie Dufresne, \emph{Invariants and separating morphisms
  for algebraic group actions}, Math. Z. (2014), to appear, {\tt
  arXiv:1402.4299 [math.AC]}.

\bibitem[KR15]{KrRe2014Automorphisms-of-t}
Hanspeter Kraft and Andriy Regeta, \emph{Automorphisms of the {L}ie algebra of
  vector fields}, J. Eur. Math. Soc. (JEMS) (2015), to appear, {\tt
  arXiv:1402.5012 [math.AG]}.

\bibitem[KRZ14]{KrZi2014Small-affine-varie}
Hanspeter Kraft, Andriy Regeta, and Susanna Zimmermann, \emph{Small affine $\rm
  {SL}_n$-varieties}, in preparation, 2014.

\bibitem[KZ14]{KrZa2014Locally-finite-gro}
Hanspeter Kraft and Mikhail Zaidenberg, \emph{Locally finite group actions and
  vector fields}, in preparation, 2014.

\bibitem[Kum02]{Ku2002Kac-Moody-groups-t}
Shrawan Kumar, \emph{Kac-{M}oody groups, their flag varieties and
  representation theory}, Progress in Mathematics, vol. 204, Birkh{\"a}user
  Boston Inc., Boston, MA, 2002. 

\bibitem[Lie11]{Li2011Roots-of-the-affin}
Alvaro Liendo, \emph{Roots of the affine {C}remona group}, Transform. Groups
  \textbf{16} (2011), no.~4, 1137--1142. 

\bibitem[Sha66]{Sh1966On-some-infinite-d}
I.~R. Shafarevich, \emph{On some infinite-dimensional groups}, Rend. Mat. e
  Appl. (5) \textbf{25} (1966), no.~1-2, 208--212. 

\bibitem[Sha81]{Sh1981On-some-infinite-d}
\bysame, \emph{On some infinite-dimensional groups. {II}}, Izv. Akad. Nauk SSSR
  Ser. Mat. \textbf{45} (1981), no.~1, 214--226, 240. 
  \end{thebibliography}

\par\bigskip\bigskip
\end{document}